\newtheorem{theorem}{Theorem}[section]
\newtheorem{lemma}[theorem]{Lemma}
\newtheorem{corollary}[theorem]{Corollary}
\newtheorem{proposition}[theorem]{Proposition}
\theoremstyle{definition}
\newtheorem{definition}[theorem]{Definition}
\newtheorem{remark}[theorem]{Remark}
\definecolor{urlcolor}{rgb}{0,.145,.698}
\definecolor{linkcolor}{rgb}{.01,0.01,0.51}
\definecolor{citecolor}{rgb}{.12,.54,.11}
\begin{document}

\title{Tilting modules arising from knot invariants}
\date{}
\author{Ralf Schiffler}\address{Department of Mathematics, University of Connecticut, 
Storrs, CT 06269-3009, USA}
\email{schiffler@math.uconn.edu}

\author{David Whiting}\address{Department of Mathematics, University of Connecticut, 
Storrs, CT 06269-3009, USA}
\email{david.whiting@uconn.edu}

\thanks{The first author was supported  by the NSF grant DMS-1800860 and by the University of Connecticut.}

\subjclass[2010]{Primary 16G20 
Secondary 	57M27,  
  13F60
} 
\maketitle
\begin{abstract}
We construct tilting modules over Jacobian algebras arising from knots.
To a two-bridge knot $L[a_1,\ldots,a_n]$, we associate a quiver $Q$ with potential and its Jacobian algebra $A$. We construct a family of canonical indecomposable  $A$-modules $M(i)$, each supported on a different specific subquiver $Q(i)$ of $Q$. Each of the $M(i)$ is expected to parametrize the Jones polynomial of the knot. We study the direct sum $M=\oplus_iM(i)$ of these indecomposables inside the module category of $A$ as well as in the cluster category.

In this paper we consider the special case where the two-bridge knot is given by two parameters $a_1,a_2$. We show that the module $M$ is rigid and $\tau$-rigid, and we construct a completion of $M$ to a tilting (and $\tau$-tilting) $A$-module  $T$. We show that the endomorphism algebra $\operatorname{End}_AT$ of $T$ is isomorphic to $A$, and that the mapping $T\mapsto A[1]$ induces a cluster automorphism of the cluster algebra $\mathcal{A}(Q)$. This automorphism is of order two.
Moreover, we give a mutation sequence that realizes the cluster automorphism.
In particular, we show that the quiver $Q$ is mutation equivalent to an acyclic quiver of type $T_{p,q,r}$ (a tree with three branches). This quiver  is of finite type
if $(a_1,a_2)=(a_1,2), (1,a_2),$ or $(2,3)$, it is tame for $(a_1,a_2)=(2,4)$ or $(3,3)$, and wild otherwise.
\end{abstract}
\section{Introduction}
We construct tilting modules over Jacobian algebras that are motivated by knot invariants of two-bridge knots and their relation to cluster algebras.

A relation between knot invariants and cluster algebras has been established recently in \cite{LS}, where the authors give a realization of the Jones polynomial of a two-bridge link in terms of the Laurent expansion of a certain cluster variable. In this approach, one first constructs  a quiver $R$ of Dynkin type $\mathbb{A}$ from the link and then considers the cluster algebra $\mathcal{A}(R)$ determined by it. The cluster variable $x_M$ that realizes the Jones polynomial is the one associated with the unique indecomposable module $M$ of the path algebra of $R$ that is one-dimensional at each vertex. In this special situation, the cluster variable $x_M$ is a Laurent polynomial whose terms are parametrized by the submodules of the module $M$ \cite{CC}. Equivalently, it can be computed via a perfect matching formula of an associated snake graph \cite{MSW}. In particular, the number of terms (counted with multiplicities) in the Jones polynomial is  equal to the number of submodules of $M$ and equal to the number of perfect matchings of the snake graph.

We point out that a similar result for the Alexander polynomial was given more recently in \cite{NT} using ancestral triangles, and for the HOMFLY polynomial in \cite{Y} using path posets. See also \cite{MO} for an interesting reformulation of the results in \cite{LS}.

On the other hand, the authors of \cite{CDR} established a dimer model for links, using the (overlaid) Tait graph $G$ of the link, to interpret the Alexander polynomial as a partition function. In this method, one has to remove  two adjacent regions from the graph $G$ and thereafter one can compute the Alexander polynomial as the determinant of the (weighted) adjacency matrix of the reduced graph. This formula is reminiscent of Alexander's original definition of the polynomial as the determinant of the incidence matrix from which one removes two columns that correspond to two adjacent regions in the graph \cite{A}. 

It is crucial to note that the result does not depend on the choice of the two adjacent regions in $G$, and this observation motivated the research project in this paper.

If the link is  a two-bridge link, we can associate a quiver $Q$ with potential $W$ to the graph $G$, and we denote by $A$ the corresponding Jacobian algebra. The removal of two adjacent regions in $G$ corresponds to the removal of a connected subquiver of $Q$, and, for a certain choice of   adjacent regions, the resulting quiver is precisely the quiver $R$ used in \cite{LS} to compute the Jones polynomial of the link. 
If we chose a different pair of adjacent regions then the subquiver may look very different, but it is natural to expect that a variation of  the methods of \cite{LS} will also apply for these subquivers and give another way to compute the Jones polynomial. This problem will be considered in a different paper.

In this paper, we want to consider all possible  choices of adjacent pairs at the same time. Each such choice $i$ will determine a subquiver $Q^i$ of the quiver $Q$ which we need to remove. On the remaining quiver $Q(i)=Q\setminus Q^i$, we define a canonical indecomposable $A$-module $M(i) $ of dimension 1 at every vertex of $Q(i)$, and we let $M=\oplus_i M(i)$ be the direct sum of these indecomposables. 

Motivated by the results mentioned above, we want to study the structure of $M$ inside the module category of $A$. Thereby we combine all possible choices of adjacent regions into one object. It is natural to expect that this module $M$ has a particularly nice structure.

By construction, $M$ is a sincere $A$-module, meaning it is supported at every vertex of $Q$,  each indecomposable summand $M(i)$ of $M$ is supported on a different subquiver $Q(i)$ of $Q$ and each $M(i)$ should parametrize the Jones and the Alexander polynomial of the two-bridge link via its submodules.

Recall that a two-bridge link $L[a_1,a_2,\ldots,a_n]$ is given by a sequence of positive integers $a_1,a_2,\ldots,a_n$ and the number of terms in the Jones (and Alexander) polynomial is equal to the numerator of the continued fraction 
\[[a_1,a_2,\ldots,a_n]= a_1+\cfrac{1}{a_2+\cfrac{1}{\ddots +\cfrac{1}{a_n}}}.\]

In this paper, we consider the special case where $n=2$.
Thus we have two positive integers $a_1,a_2$ and the number of terms in the Jones polynomial is $a_1a_2+1$. The case $n=1$ is contained in this case because the link $L[a_1]$ is equivalent to the link $L[1,a_1-1]$. 
For $n=2$, the quiver $Q$ is of the form illustrated in Figure~\ref{fig:Q}. In this situation the removal of two adjacent regions in $G$ corresponds to the removal of a vertex $r_i$ on the oriented cycle of $Q$ together with the branch at the vertex $r_i$ if $i =0$ or $i=a_2$. We let $M(r_i)$ be the unique indecomposable $A$-module that is one-dimensional at every vertex of the remaining subquiver. We prove that each $M(r_i)$ has precisely $a_1a_2+1$ submodules as expected.

We then construct an indecomposable $A$-module $M(x)$ for each of the remaining vertices $x=s_1,s_2,\ldots,s_{a_1-1},t_1,t_2,\ldots,t_{a_1-1}$ of $Q$, and we define $T=\oplus_{x\in Q_0} M(x)$ to be the direct sum of all these indecomposables.

We show that $T$ is a tilting $A$-module. This means that $\operatorname{Ext}^1_A(T,T)$ vanishes, that the projective dimension of $T$ is at most one and that there exists a short exact sequence of the form $0\to A\to T^0\to T^1\to 0$ with $T^0,T^1$ in the additive closure of $T$.
Tilting modules and their endomorphism algebras play a central role in representation theory, see for example \cite{AsSiSk, handbook}. 

Furthermore, we show that $T$ is a $\tau$-tilting $A$-module as defined in \cite{AIR}. This means that in addition to being a tilting module, $\operatorname{Hom}(T, \tau T) = 0$ where $\tau$ is the Auslander-Reiten translation.

Our $\tau$-tilting module $T$ is particularly nice, since we can show that its endomorphism algebra $\operatorname{End}_A T$ is isomorphic to the dual of $A$, and $A$ is self-dual.

\medskip
We then consider this situation at the level of the corresponding cluster algebra $\mathcal{A}(Q)$ defined in \cite{FZ1}. We show that  the $\tau$-tilting module $T$ induces a cluster-tilting object in the cluster category $\mathcal{C}_Q$ introduced in \cite{Amiot}. It turns out that our quiver is mutation equivalent to an acyclic quiver and thus the cluster category is of acyclic type as defined in \cite{BMRRT} (and \cite{CCS}, for Dynkin type $\mathbb{A}$).
Therefore, the results in \cite{CK2} imply that each of $A[1]$ and $T$ corresponds to a cluster $\mathbf{x}_{A[1]}$ and $\mathbf{x}_T$ in the cluster algebra (here $[1]$ denotes the shift in the cluster category).
Using a result from \cite{ASS}, we see that the correspondence $T \mapsto A[1]$ induces a cluster automorphism $\sigma$ of $\mathcal{A}(Q)$, that maps the cluster $\mathbf{x}_T$  to the cluster $\mathbf{x}_{A[1]}$ and induces an isomorphism between the quiver $Q$ of the cluster $
\mathbf{x}_{A[1]}$ and the opposite of the quiver of the cluster $\mathbf{x}_T$.
We further show that $\sigma$ has order 2.
We then construct a sequence of mutations $\mu$ that realizes the automorphism $\sigma$.

Our main results are summarized in the following theorem.
\begin{theorem} Let $A$ be the Jacobian algebra of $Q=Q[a_1,a_2]$ and let $T=\oplus_{x\in Q_0} M(x)$.
\begin{enumerate}
\item 
 $T$ is a tilting $A$-module and a $\tau$-tilting $A$-module.
 \item The endomorphism algebra $\operatorname{End}_A T$ is isomorphic to $A$.
 \item The correspondence $T \mapsto A[1]$ induces a cluster automorphism $\sigma$ of order two of the cluster algebra $\mathcal{A}(Q)$.
 \item The automorphism $\sigma$ is given by the sequence of mutations \[ \mu= \mu_{\mkern-1.5mu R}^{-1}\circ\mu_S\circ\mu_T\circ\mu_{\mkern-1.5mu R}, \]
 where 
\begin{align*}
        &\mu_{\mkern-1.5mu R} = \mu_{r_{a_2}-1} \circ \cdots \circ \mu_{r_2} \circ \mu_{r_1}, \\
        &\mu_S = \mu_{s_1}\mu_{s_2}\mu_{s_3} \cdots \mu_{r_{a_2}} \circ \cdots \circ \mu_{s_1}\mu_{s_2}\mu_{s_3} \circ \mu_{s_1}\mu_{s_2} \circ \mu_{s_1}, \\
        &\mu_T = \mu_{t_{a_1-1}}\mu_{t_{a_1-2}}\mu_{t_{a_1-3}} \cdots \mu_{r_0} \circ \cdots \circ \mu_{t_{a_1-1}}\mu_{t_{a_1-2}}\mu_{t_{a_1-3}} \circ \mu_{t_{a_1-1}}\mu_{t_{a_1-2}} \circ \mu_{t_{a_1-1}}.
\end{align*}
 \item The cluster algebra $\mathcal{A}(Q)$ is of acyclic type $\mu_{\mkern-1.5mu R} \,Q = T_{p,q,r}$, a tree with three branches. Moreover, the cases where it is of finite or tame type are the following.
 \[ \begin{array}{ll}
 \mathbb{A}_{2a_1+1} & \textup{if $a_2=2$};\\
 \mathbb{D}_{a_2+1} & \textup{if $a_1=1$};\\ 
 \mathbb{E}_{6} & \textup{if $(a_1,a_2)=(2,3)$};\\
  \widetilde{\mathbb{E}}_{7} & \textup{if $(a_1,a_2)=(3,3)$};\\
\widetilde{ \mathbb{E}}_{6} & \textup{if $(a_1,a_2)=(2,4)$};
  \end{array}
 \] 
 and it is of wild acyclic type in all other cases.
 \end{enumerate}
\end{theorem}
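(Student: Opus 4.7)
The plan is to treat the five statements in sequence, exploiting the very explicit combinatorial description of the quiver $Q[a_1,a_2]$ and the modules $M(x)$. Throughout, I would fix once and for all a concrete description of each indecomposable $M(x)$ in terms of the subquiver $Q(x)$ it is supported on, together with the maps representing each arrow of $Q$. Because every $M(x)$ is one-dimensional at each vertex of its support, all $\mathrm{Hom}$ and $\mathrm{Ext}$ computations reduce to combinatorics of these supports together with the Jacobian relations.

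For part (1), I would first count indecomposable summands of $T$ and check $|T|=|Q_0|$, then establish projective dimension at most one by writing down an explicit minimal projective resolution $0\to P_1(x)\to P_0(x)\to M(x)\to 0$ for each $x$, using that the potential gives a well-understood description of the radical of each indecomposable projective. Rigidity $\mathrm{Ext}^1_A(M(x),M(y))=0$ would be proved by showing, case by case in the three types $x\in\{r_i\},\{s_j\},\{t_k\}$, that any extension splits; I expect this to reduce to the statement that every map $P_1(x)\to M(y)$ factors through $P_0(x)$. For the $\tau$-rigidity, I would compute $\tau M(x)$ from the projective presentation via the Nakayama functor and verify $\mathrm{Hom}_A(T,\tau T)=0$; the approximation triangle $0\to A\to T^0\to T^1\to 0$ can be assembled by hand since the indecomposable projectives are themselves concrete submodules of certain $M(x)$'s.

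For (2), I would define a candidate algebra homomorphism $A\to\mathrm{End}_A T$ by sending each arrow $\alpha:x\to y$ of $Q$ to an explicit element of $\mathrm{Hom}_A(M(y),M(x))$ (note the direction: the Gabriel quiver of $\mathrm{End}_A T$ has an arrow $M(y)\to M(x)$ whenever there is an irreducible map between them). One then verifies that these homomorphisms satisfy the Jacobian relations of $(Q,W)$, compares dimensions using the Hom-computations already needed in (1), and invokes Nakayama self-duality of $A$. Part (3) is then nearly formal: the acyclicity in part (5) puts us in the BMRRT/CK2 framework where cluster-tilting objects correspond to clusters, and the ASS criterion cited in the introduction upgrades the algebra isomorphism $\mathrm{End}_A T\cong A$ to a cluster automorphism $\sigma$ sending $\mathbf{x}_T\mapsto\mathbf{x}_{A[1]}$; the order-two statement follows because applying $\sigma$ again gives an automorphism that fixes $\mathbf{x}_{A[1]}$ and the associated quiver, hence is the identity.

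Parts (4) and (5) I would handle together, and this is where the main effort lies. First I would compute $\mu_R(Q)$ explicitly, peeling off the arrows of the oriented $(a_2+1)$-cycle one at a time, and verify directly that the resulting quiver is the tree $T_{p,q,r}$ with legs of lengths determined by $a_1,a_2$; the classification of finite and tame cases in (5) then follows from Gabriel's and the affine $ADE$ classifications applied to $T_{p,q,r}$. For (4), having straightened the cycle, the sequences $\mu_S$ and $\mu_T$ act on the two branches in a triangular pattern (a ``staircase'' of compositions of length $1,2,3,\dots$) which I would recognize as the standard mutation sequence realizing the source-to-sink reflection functor on a type $\mathbb{A}$ tail; proving this on each branch separately reduces to an induction on the branch length. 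The hard part will be bookkeeping: showing that after $\mu_R^{-1}$ the combined effect on the whole quiver is precisely the anti-isomorphism predicted by $\sigma$, matching vertices with their images under the knot-theoretic symmetry. I would verify this by direct comparison of the final quiver with $Q^{\mathrm{op}}$ under the vertex relabeling induced by $T\mapsto A[1]$, completing the proof.
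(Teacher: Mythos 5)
Your treatment of parts (1), (2) and (5) is essentially the paper's own route: explicit projective presentations plus the Nakayama functor to compute $\tau M(x)$, thin-module combinatorics for the $\operatorname{Hom}$ spaces, the summand count $|T|=|Q_0|$, and a direct computation of $\mu_R\, Q$ (one small economy you miss: the paper deduces $\operatorname{Ext}^1(T,T)=0$ from $\operatorname{Hom}(T,\tau T)=0$ via the Auslander--Reiten formula, rather than splitting extensions case by case; also the short exact sequence $0\to A\to T^0\to T^1\to 0$ need not be built by hand, since the summand-count characterization of tilting modules is used instead). The genuine gap is in part (3): your order-two argument --- ``applying $\sigma$ again gives an automorphism that fixes $\mathbf{x}_{A[1]}$ and the associated quiver, hence is the identity'' --- is unjustified. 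All you know is $\sigma(\mathbf{x}_T)=\mathbf{x}_{A[1]}$, so $\sigma^2(\mathbf{x}_T)=\sigma(\mathbf{x}_{A[1]})$, and nothing in the formal setup forces $\sigma(\mathbf{x}_{A[1]})$ to be $\mathbf{x}_T$. No abstract argument can close this hole: the paper explicitly remarks that for two-bridge links with $n\geq 3$ parameters the identical construction does \emph{not} yield an order-two automorphism, so involutivity is a special feature of the $n=2$ case that must be computed. The paper proves it through the mutation sequence: since in $\mu_R\, Q$ there are no arrows between $s_i$ and $s_{i+\ell}$ for $\ell>1$, the staircase $\mu_S$ applied to $\mu_R\, T$ equals its own inverse (Lemma~\ref{lem 7.4}), and since no arrows join the $s$- and $t$-branches, $\mu^2=\mu_R^{-1}\mu_S\mu_S\mu_T\mu_T\mu_R$ acts as the identity on $T$ (Corollary~\ref{cor:order_two}); order two of $\sigma$ is then inherited from part (4).

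Your plan for part (4) has a related defect: comparing the final quiver with $Q^{\operatorname{op}}$ under the vertex relabeling shows only that $\mu$ produces a seed whose quiver is the expected one, not that $\mu(\mathbf{x}_T)=\mathbf{x}_{A[1]}$ --- distinct clusters can carry isomorphic quivers, so a quiver-level check cannot identify the automorphism. One must track the cluster-tilting objects themselves. The paper does this in the cluster category: along $\mu_R$ it computes each exchange triangle $M(r_i)\xrightarrow{f} M(r_0)\to X(r_i)$ (using that at each step there is a unique arrow $r_0\to r_i$ and that $M(r_0)=I(r_{a_2})$ is injective, so $\tau^{-1}X(r_i)=\ker f$ can be read off and then $X(r_i)$ recovered via the Nakayama functor), finding $X(r_i)$ supported on $r_1,\ldots,r_i$. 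Your reflection-functor idea on the branches is in substance the paper's observation that $\mu_S$ (resp.\ $\mu_T$) mutates only at sources (resp.\ sinks) and hence acts on summands by the shift, giving $M(s_i)[a_1-i+1]=P(t_{a_1-i})[1]$ and $M(t_i)[-(i+1)]=P(s_{a_1-i})[1]$; but the proof is only complete after also running $\mu_R$ on $A[1]$ and checking $X'(r_i)=X(r_i)$ by the same kernel computation. Without this object-level bookkeeping on the $r$-cycle and the matching step $X'(r_i)=X(r_i)$, the identity $\mu\, T=A[1]$ --- and with it both parts (3) and (4) --- remains unproven.
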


The case $n\ge 3$  is more complicated and will require different methods. We expect that we can still construct the tilting module $T$ and a mutation sequence. However, already in small examples, we do not obtain a cluster automorphism of order two.

The paper is organized as follows.
We define the $A$-module $T$ in 
 Section~\ref{sec:def_of_M} and compute the number of submodules in 
Section~\ref{sec:submodules}. After computing the Auslander-Reiten translate of $T$ in 
Section~\ref{sec:tau_M}, we show that $T$ is a tilting and a $\tau$-tilting $A$-module in
Section~\ref{sec:tilting_module}.
Section~\ref{sec:end_M} is devoted to the study of the endomorphism algebra of $T$ and to the proof of $\operatorname{End}_A T\cong A$. In Section~\ref{sec:mutation}, we construct the mutation sequence that transforms $A$ into $T$ and show that the corresponding cluster automorphism has order two. We illustrate the results in an example in Section~\ref{sec:example}.

\begin{figure}[h]
    \centering
    \begin{tikzpicture}[->,scale=.7] 
        \node (r0) at (60:2cm) {$r_0$}; \node (ra2) at (120:2cm) {$r_{a_2}$};
        \node (ra2-1) at (180:2cm) {$r_{a_2-1}$};
        \node (r1) at (0:2cm) {$r_1$};
        
        \node (t1) at ($(r0) + (2,0)$) {$t_1$};
        \node (t2) at ($(t1) + (1.2,0)$) {};
        \node (ta1-2) at ($(t2) + (1.2,0)$) {};
        \node (ta1-1) at ($(ta1-2) + (1.6,0)$) {$t_{a_1-1}$};
        
        \node (sa1-1) at ($(ra2) + (-2.5,0)$) {$s_{a_1-1}$};
        \node (sa1-2) at ($(sa1-1) + (-1.6,0)$) {};
        \node (s2) at ($(sa1-2) + (-1.2,0)$) {};
        \node (s1) at ($(s2) + (-1.2,0)$) {$s_{1}$};
        
        \draw (s1) -- (s2);
        \draw[thick, loosely dotted, -] (s2) -- (sa1-2);
        \draw (sa1-2) -- (sa1-1);
        \draw (sa1-1) -- (ra2);
        
        \draw (r0) -- (t1);
        \draw (t1) -- (t2);
        \draw[thick, loosely dotted, -] (t2) -- (ta1-2);
        \draw (ta1-2) -- (ta1-1);
        
        \draw (165:2cm) arc (165:135:2cm);
        \draw (45:2cm) arc (45:15:2cm);
        \draw (105:2cm) arc (105:75:2cm);
        \draw (225:2cm) arc (225:195:2cm);
        \draw (-15:2cm) arc (-15:-45:2cm);
        \draw[thick, loosely dotted, -] (-55:2cm) arc (-55:-125:2cm);
    
    \end{tikzpicture}
    \caption{The Quiver $Q = Q[a_1, a_2] = (Q_0, Q_1)$}
    \label{fig:Q}
\end{figure}
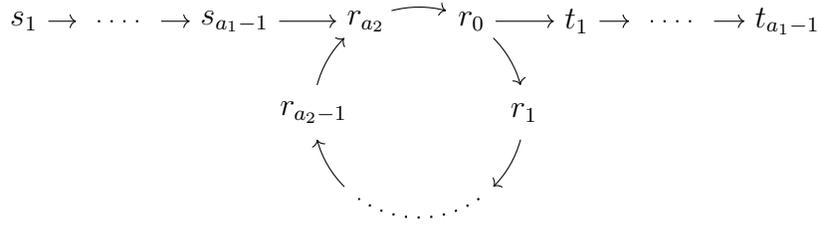

\section{Definition of \texorpdfstring{$T$}{T}.}\label{sec:def_of_M}
Throughout the paper, let $k$ denote an algebraically closed field. Let $a_1,a_2$ be positive integers and let $Q=Q[a_1,a_2]$ be the quiver shown in Figure \ref{fig:Q}. Thus $Q$ consists of an oriented cycle of length $a_2+1$ and a path of length $2a_1-1$ whose halfway arrow is part of the cycle. Let $Q_0$ be the set of vertices of $Q$ and $Q_1$ the set of arrows. We label the vertices that lie on the cycle by $r_0,r_1,\ldots, r_{a_2}$, those on the incoming branch by $s_1,s_2,\ldots, s_{a_1-1}$ 
and those on the outgoing branch by $t_1,t_2,\ldots, t_{a_1-1}$.

We equip $Q$ with the potential $W=r_0\to r_1\to \cdots \to r_{a_2}\to r_0$ and denote the corresponding Jacobian algebra by $A$. Thus $A=kQ/I$ is the quotient of the path algebra of $Q$ by the two-sided ideal $I$ generated by all subpaths of $W$ of length $a_2$.

The opposite quiver $Q^{\rm op} $ is obtained from $Q$ by reversing the direction of each arrow. Let $D=\operatorname{Hom}_k(-,k)$ denote the standard duality.
\begin{remark}
 The quiver $Q$ is isomorphic to its opposite quiver $Q^{\rm op}$ and, hence, the algebra $A$ is isomorphic to its dual $D\!A$.
\end{remark}

For every $x\in Q_0$, let $P(x),I(x)$ and $ S(x)$ denote the indecomposable projective, injective and simple $A$-module at $x$, respectively.  
Every $A$-module $L$ is described as a representation of the quiver $Q$ satisfying the relations in $I$, and we shall frequently use the notation $L=(L_x,\varphi_\alpha)_{x\in Q_0,\alpha\in Q_1}$, where $L_x$ is the $k$-vector space at vertex $x$ and $\varphi_\alpha$ is the linear map on the arrow $\alpha$ of the representation corresponding to $L$.
For further details on representation theory we refer to \cite{AsSiSk,Sbook}.
\smallskip

In this section, we define an indecomposable $A$-module $M(x)$ for every vertex $x\in Q_0$, and denote by $T$ the direct sum $T=\oplus_{x\in Q_0} M(x)$. 
The definition of $M(x)$ is given in three separate cases depending on $x$ being a vertex in the cycle or in one of the two branches. 


\begin{definition}\label{def:M(ri)}
    For all vertices labeled $r_i$, we define the module $M(r_i) = (M(r_i)_x, \varphi_\alpha)$ by
    \begin{align*}
    &M(r_i) = 
        \begin{smallmatrix}
            r_{i+1} & & s_1 \\
            r_{i+2} & & s_2 \\
            \vdots & & \vdots \\
            r_{a_2-1} & & s_{a_1-1} \\
            & r_{a_2} & \\
            & r_0 & \\
            r_1 & & t_1 \\
            r_2 & & t_2 \\
            \vdots & & \vdots \\
            r_{i-1} & & t_{a_1-1}
        \end{smallmatrix},
        \quad\quad\quad&M(r_i)_x := 
        \begin{cases}
            0 & \text{if $x=r_i$;} \\
            0 & \text{if $r_i=r_0$ and $x=t_j$ for any $j$;} \\
            0 & \text{if $r_i=r_{a_2}$ and $x=s_j$ for any $j$;} \\
            k & \text{otherwise.}
        \end{cases}
    \end{align*}
    Given any arrow $x \xrightarrow{\alpha} y$ in $Q_1$, the map $M(r_i)_x \xrightarrow{\varphi_\alpha} M(r_i)_y$ is defined by
    \begin{align*}
        \varphi_\alpha := 
        \begin{cases}
            1 & \text{if $M(r_i)_x = k = M(r_i)_y$;} \\
            0 & \text{otherwise.}
        \end{cases}
    \end{align*}
    Thus the support of $M(r_i)$ is given by removing the vertex $r_i$ from our quiver $Q$, together with vertices labeled $s_j$ when $i = a_2$, and respectively vertices labeled $t_j$ when $i = 0$. Equivalently, the support of $M(r_i)$ is given by removing the vertex $r_i$ from our quiver $Q$ and taking the component containing $r_{i+1}$.
\end{definition}

\begin{remark}\label{rem:M(ri)_min_path}
    For any minimal path $w$ between a vertex $x$ and a vertex $y$ of $Q$, let $\varphi_w$ be the composition of maps in the module $M(r_i)$ along the path $w$. If $M(r_i)_x = k$, then
    \begin{align*}
        \varphi_w = \begin{cases}
        1 & \text{if $r_{i}$ is not in $w$;} \\
        0 & \text{if $r_{i}$ is in $w$.}
        \end{cases}
    \end{align*}
\end{remark}

\begin{definition}\label{def:M(ti)}
    For all vertices labeled $t_i$, we define the module $M(t_i) = (M(t_i)_x, \varphi_\alpha)$ by the short exact sequence
    \begin{align*}
        0 \to M(t_i) \to I(r_{a_2}) \xrightarrow{f} I(s_i) \to 0,
    \end{align*}
    with $f$ given by the path $s_i \to s_{i+1} \to \cdots \to r_{a_2}$. For any vertex $x \in Q_0$, we have $M(t_i)_x = k$ if and only if $I(s_i)_x = 0$ and $I(r_{a_2})_x = k$. Also, $M(t_i)_x = 0$ otherwise. Therefore
    \begin{align*}
        &M(t_i) = 
        \begin{smallmatrix}
            r_{1} & & s_{i+1} \\
            r_{2} & & s_{i+2} \\
            \vdots & & \vdots \\
            r_{a_2-1} & & s_{a_1-1} \\
            & r_{a_2} &
        \end{smallmatrix}, 
        \quad\quad\quad&M(t_i)_x =
        \begin{cases}
            0 & \text{if $x=r_{0}$;} \\
            0 & \text{if $x=t_j$ for any $j$;} \\
            0 & \text{if $x=s_j$ for any $j \geq i$;} \\
            k & \text{otherwise.}
        \end{cases} 
    \end{align*}
    Given any arrow $x \xrightarrow{\alpha} y$ in $Q_1$, the map $M(t_i)_x \xrightarrow{\varphi_\alpha} M(t_i)_y$ is given by
    \begin{align*}
        \varphi_\alpha = 
        \begin{cases}
            1 & \text{if $M(t_i)_x = k = M(t_i)_y$;} \\
            0 & \text{otherwise.}
        \end{cases}
    \end{align*}
\end{definition}

\begin{definition}\label{def:M(si)}
    For all vertices labeled $s_i$, we define the module $M(s_i) = (M(s_i)_x, \varphi_\alpha)$ by the short exact sequence
    \begin{align*}
        0 \to P(t_i) \xrightarrow{g} P(r_0) \to M(s_i) \to 0,
    \end{align*}
    with $g$ given by the path $r_0 \to t_1 \to \cdots \to t_i$. For any vertex $x \in Q_0$, we have $M(s_i)_x = k$ if and only if $P(t_i)_x = 0$ and $P(r_0)_x = k$. Also, $M(s_i)_x = 0$ otherwise. Therefore
    \begin{align*}
        & M(s_i) = 
        \begin{smallmatrix}
            & r_0 & \\
            r_1 & & t_1 \\
            r_2 & & t_2 \\
            \vdots & & \vdots \\
            r_{a_2-1} & & t_{i-1}
        \end{smallmatrix}, 
        \quad\quad\quad&M(s_i)_x =
        \begin{cases}
            0 & \text{if $x=r_{a_2}$;} \\
            0 & \text{if $x=t_j$ for any $j \geq i$;} \\
            0 & \text{if $x=s_j$ for any $j$;} \\
            k & \text{otherwise.}
        \end{cases}
    \end{align*}
    Given any arrow $x \xrightarrow{\alpha} y$ in $Q_1$, the map $M(s_i)_x \xrightarrow{\varphi_\alpha} M(s_i)_y$ is given by
    \begin{align*}
        \varphi_\alpha = 
        \begin{cases}
            1 & \text{if $M(s_i)_x = k = M(s_i)_y$;} \\
            0 & \text{otherwise.}
        \end{cases}
    \end{align*}
\end{definition}

\begin{definition}\label{def:M}
    We define the module $T$ to be the sum of all modules defined in Definitions~\ref{def:M(ri)}, \ref{def:M(ti)}, and \ref{def:M(si)}. That is to say
    \begin{align*}
        T := \Bigg( \bigoplus_{i=0}^{a_2} M(r_i) \Bigg) \oplus \Bigg(\bigoplus_{i=1}^{a_1-1} M(s_i) \Bigg) \oplus \Bigg(\bigoplus_{i=1}^{a_1-1} M(t_i) \Bigg) = \bigoplus_{x \in Q_0} M(x).
    \end{align*}
\end{definition}

\section{Submodules of \texorpdfstring{$M(r_i)$}{M(ri)}.}\label{sec:submodules} 
As mentioned in the introduction, we want that \emph{each} indecomposable module $M(r_i)$ gives a parametrization of the Alexander polynomial of the link $L[a_1,a_2]$ via its submodules. In particular, since the Alexander polynomial has $a_1a_2+1$ terms, we should expect that the number of submodules of $M(r_i)$ also equals  $a_1a_2+1$ and does not depend on $i$. We prove this fact in this section. 

\begin{lemma}\label{lem:submod_supp_ra2}
    For every $i=0,1,\ldots,a_2$, the module
  $M(r_i)$ has exactly $a_1(a_2-i)$ submodules with support at $r_{a_2}$.
\end{lemma}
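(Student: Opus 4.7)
The plan is to enumerate submodules of $M(r_i)$ directly by exploiting the fact that $M(r_i)$ is thin, i.e., $\dim_k M(r_i)_x\in\{0,1\}$ at every vertex $x$, and that all of its nonzero structure maps are the identity (as encoded in \Cref{rem:M(ri)_min_path}). Under these hypotheses a submodule $N\subseteq M(r_i)$ is completely determined by its support, and the subrepresentation condition is exactly that $\operatorname{supp}(N)\subseteq\operatorname{supp}(M(r_i))$ be \emph{successor-closed}: whenever $\alpha\colon x\to y$ is an arrow of $Q$ with both $x,y\in\operatorname{supp}(M(r_i))$ and $x\in\operatorname{supp}(N)$, then $y\in\operatorname{supp}(N)$. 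The potential relations in $I$ impose no extra condition, because any subpath of $W$ of length $a_2$ traverses every vertex of the cycle and in particular passes through the removed vertex $r_i$, so the corresponding composition is already zero in $M(r_i)$. The case $i=a_2$ is then immediate: $r_{a_2}\notin\operatorname{supp}(M(r_{a_2}))$, so there are no submodules with support at $r_{a_2}$, matching $a_1(a_2-a_2)=0$.

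For $0\le i<a_2$, I fix a submodule $N$ with $r_{a_2}\in\operatorname{supp}(N)$ and split $\operatorname{supp}(M(r_i))$ into a ``downstream'' part forced to lie in $N$ and an ``upstream'' part whose vertices may be freely chosen subject to successor-closedness. Iterating successor-closedness from $r_{a_2}$ along the broken cycle $r_{a_2}\to r_0\to r_1\to\cdots\to r_{i-1}$ and along the outgoing $t$-branch $r_0\to t_1\to\cdots\to t_{a_1-1}$ forces $N$ to contain the entire set $\{r_{a_2},r_0,r_1,\ldots,r_{i-1},t_1,\ldots,t_{a_1-1}\}$ when $i>0$, and just $\{r_{a_2}\}$ when $i=0$ (since then $r_0$ itself is removed and the outgoing arrow $r_{a_2}\to r_0$ is the zero map in $M(r_0)$). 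No further choices come from this part.

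What remains in $\operatorname{supp}(M(r_i))$ after removing the forced set is the disjoint union of two directed chains pointing into $r_{a_2}$, namely $r_{i+1}\to r_{i+2}\to\cdots\to r_{a_2-1}$ and $s_1\to s_2\to\cdots\to s_{a_1-1}$, and successor-closedness forces $\operatorname{supp}(N)$ to meet each chain in a (possibly empty) terminal tail. The terminal arrows $r_{a_2-1}\to r_{a_2}$ and $s_{a_1-1}\to r_{a_2}$ are satisfied for free because $r_{a_2}\in\operatorname{supp}(N)$, so the two tails are independent. Counting, the $r$-chain tail is $\{r_{j_0},\ldots,r_{a_2-1}\}$ for $j_0\in\{i+1,i+2,\ldots,a_2\}$ (with $j_0=a_2$ giving the empty tail), contributing $a_2-i$ options; the $s$-chain tail is $\{s_{j_0},\ldots,s_{a_1-1}\}$ for $j_0\in\{1,2,\ldots,a_1\}$, contributing $a_1$ options. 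Multiplying yields $a_1(a_2-i)$, as claimed. There is no real obstacle here; the only delicate point is keeping track of the edge case $i=0$, where the $t$-branch is absent from $\operatorname{supp}(M(r_0))$ entirely and the downstream forced set collapses to $\{r_{a_2}\}$, but the two-factor count still agrees with the stated formula.
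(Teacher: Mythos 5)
Your proof is correct and takes essentially the same route as the paper's: both enumerate the submodules containing $r_{a_2}$ by the choice of a terminal tail in each of the two chains $r_{i+1}\to\cdots\to r_{a_2-1}$ and $s_1\to\cdots\to s_{a_1-1}$ feeding into $r_{a_2}$, giving $(a_2-i)\cdot a_1$ submodules (your tail starting points $j_0,\ell$ are exactly the paper's parameters $(j,\ell)$ with $i+1\le j\le a_2$ and $1\le \ell\le a_1$). The only difference is that you make explicit, via the successor-closed-support description of submodules of thin modules with identity maps, why the list is exhaustive and why the forced downstream part contributes no further choices---a verification the paper leaves implicit.
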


\begin{proof}
    If $i=a_2$, then there is nothing to show because $M(r_{a_2})$ is not supported at $r_{a_2}$. Otherwise, for each pair $(j,\ell)$ with  $i+1 \leq j \leq a_2$ and $1 \leq \ell \leq a_1$, we have a submodule
    \begin{align*}
        \begin{smallmatrix}
            r_j & & s_{\ell} \\
            \vdots & & \vdots \\
            r_{a_2-1} & & s_{a_1-1} \\
            & r_{a_2} & \\
            & r_0 & \\
            r_1 & & t_1 \\
            r_2 & & t_2 \\
            \vdots & & \vdots \\
            r_{i-1} & & t_{a_1-1}
        \end{smallmatrix},
    \end{align*}
    where we set $s_{a_1}=r_{a_2}$ if $\ell=a_1$.
    There are exactly $a_1(a_2-i)$ such submodules. 
\end{proof}

\begin{lemma}\label{lem:submod_supp_r0}
    For every $i=1, 2\ldots,a_2$, the module 
    $M(r_i)$ has exactly $1$ submodule that is supported at $r_0$ and not supported at $r_{a_2}$.
\end{lemma}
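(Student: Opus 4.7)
The plan is to exploit the fact that each $M(r_i)$ has vector space dimension at most $1$ at every vertex, so a submodule $N \subseteq M(r_i)$ is entirely determined by its support. More precisely, $N$ corresponds to a subset $\operatorname{supp}(N)$ of $\operatorname{supp}(M(r_i))$ with the property that whenever $x \in \operatorname{supp}(N)$ and there is an arrow $\alpha\colon x\to y$ in $Q$ with $\varphi_\alpha \neq 0$ (equivalently $y\in \operatorname{supp}(M(r_i))$), one has $y\in \operatorname{supp}(N)$. Counting submodules thus reduces to counting successor-closed subsets of $\operatorname{supp}(M(r_i))$ satisfying the stated conditions.

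First I would assume $N$ is a submodule with $r_0 \in \operatorname{supp}(N)$ and $r_{a_2} \notin \operatorname{supp}(N)$, and propagate the closure condition forward from $r_0$. Along the outgoing branch, $r_0 \to t_1 \to t_2 \to \cdots \to t_{a_1-1}$ are all in $\operatorname{supp}(M(r_i))$ (recall $i\neq 0$ since $i\geq 1$), so closure forces $t_1, t_2, \ldots, t_{a_1-1}$ into $\operatorname{supp}(N)$. Along the cycle, moving from $r_0$ through $r_1,r_2,\ldots$, each successor lies in $\operatorname{supp}(M(r_i))$ until we reach the removed vertex $r_i$; thus $r_0, r_1, \ldots, r_{i-1}$ are forced into $\operatorname{supp}(N)$ (the chain terminates at $r_{i-1}$ because the arrow $r_{i-1}\to r_i$ lands outside the support and imposes no further requirement).

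Next I would show that no additional vertex can belong to $\operatorname{supp}(N)$. If some $r_j$ with $i<j<a_2$ were in $\operatorname{supp}(N)$, then the chain of arrows $r_j\to r_{j+1}\to\cdots\to r_{a_2}$ (all inside $\operatorname{supp}(M(r_i))$) would force $r_{a_2}\in \operatorname{supp}(N)$, contradicting the hypothesis. Likewise, if some $s_j$ were in $\operatorname{supp}(N)$, the path $s_j\to s_{j+1}\to\cdots\to s_{a_1-1}\to r_{a_2}$ would again force $r_{a_2}\in \operatorname{supp}(N)$, a contradiction. Hence
\[
\operatorname{supp}(N) = \{r_0, r_1, \ldots, r_{i-1}, t_1, t_2, \ldots, t_{a_1-1}\},
\]
which is indeed successor-closed in $\operatorname{supp}(M(r_i))$ and therefore defines a unique submodule.

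The only case requiring a brief separate remark is $i=a_2$, where by definition $\operatorname{supp}(M(r_{a_2}))$ already excludes $r_{a_2}$ and all $s_j$, so the condition "not supported at $r_{a_2}$" is automatic and the above argument yields the unique submodule $N=M(r_{a_2})$. There is no real obstacle in this proof; the only thing to be careful about is handling the two endpoints $i=1$ and $i=a_2$ consistently with the general case, which amounts to checking that the forward-chain on the cycle and the forced vanishing on the other side behave correctly at the boundary.
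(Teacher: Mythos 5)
Your argument is correct and arrives at exactly the submodule the paper exhibits: the paper's proof is a one-line display of the module with support $\{r_0, r_1, \ldots, r_{i-1}, t_1, \ldots, t_{a_1-1}\}$, leaving uniqueness to inspection, while you make the same identification rigorous via the standard observation that submodules of a thin module correspond to successor-closed subsets of its support. Your propagation from $r_0$ (forcing the $t_j$ and $r_0,\ldots,r_{i-1}$) and your exclusion of any $r_j$ with $i<j<a_2$ and any $s_j$ (either would force $r_{a_2}$ into the support) correctly supply the uniqueness step the paper omits, and your handling of the boundary cases $i=1$ and $i=a_2$ is consistent with the definitions.
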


\begin{proof}
    The submodule is given by
    \begin{align*}
        \begin{smallmatrix}
            & r_0 & \\
            r_1 & & t_1 \\
            r_2 & & t_2 \\
            \vdots & & \vdots \\
            r_{i-1} & & t_{a_1-1}
        \end{smallmatrix}.
    \end{align*}
\end{proof}

\begin{lemma}\label{lem:submod_rest}
     For every $i=1,2,\ldots,a_2$, the module
$M(r_i)$ has exactly $(i\,a_1)$ submodules that are zero at  $r_0$ and $r_{a_2}$.
\end{lemma}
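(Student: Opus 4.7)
The plan is to exploit the fact that $M(r_i)$ has one-dimensional components on its support with identity transition maps, so any submodule $N$ is specified by choosing at each vertex $x$ a subspace $N_x\in\{0,k\}$ subject to the constraint that whenever $\alpha:x\to y$ is an arrow with $N_x=k$, also $N_y=k$.

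First I would translate the hypothesis $N_{r_0}=N_{r_{a_2}}=0$ into vanishing at further vertices. Since $M(r_i)$ assigns the identity to both $r_{a_2-1}\to r_{a_2}$ and $s_{a_1-1}\to r_{a_2}$ (by Definition~\ref{def:M(ri)}), the condition $N_{r_{a_2}}=0$ forces $N_{r_{a_2-1}}=N_{s_{a_1-1}}=0$, and iterating backwards along the paths $r_{i+1}\to\cdots\to r_{a_2-1}$ and $s_1\to\cdots\to s_{a_1-1}$ yields $N_{r_j}=0$ for $i+1\le j\le a_2-1$ and $N_{s_j}=0$ for $1\le j\le a_1-1$. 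The hypothesis $N_{r_0}=0$ imposes no further vanishing, because the only arrow into $r_0$ is $r_{a_2}\to r_0$ (already satisfied) and the arrows out of $r_0$ yield only the vacuous constraint $0\subseteq N_y$.

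The residual freedom therefore lives on the two disjoint linear subquivers $r_1\to r_2\to\cdots\to r_{i-1}$ and $t_1\to t_2\to\cdots\to t_{a_1-1}$, of $i-1$ and $a_1-1$ vertices respectively, which are not joined by any arrow of $Q$ except through the already-vanishing vertex $r_0$. On a directed line with identity transitions and $n$ vertices, a submodule is determined by a single cutoff point, giving exactly $n+1$ choices (including the zero submodule). Multiplying the resulting $i$ choices on the $r$-subquiver with the $a_1$ choices on the $t$-subquiver yields $i\,a_1$ submodules in total, as claimed. The only minor bookkeeping is in the edge cases $i=1$ or $a_1=1$, where the corresponding subquiver is empty and contributes exactly one choice (the zero submodule), preserving the product formula; consistency with Lemmas~\ref{lem:submod_supp_ra2} and~\ref{lem:submod_supp_r0} is immediate, since $a_1(a_2-i)+1+i\,a_1=a_1a_2+1$ recovers the expected total.
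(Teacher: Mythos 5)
Your overall strategy is the same one underlying the paper's proof: submodules of the thin module $M(r_i)$ are exactly the successor-closed choices of subspaces, and after imposing vanishing at $r_0$ and $r_{a_2}$ the only freedom is a cutoff $j$ on the chain $r_1\to\cdots\to r_{i-1}$ and a cutoff $\ell$ on the chain $t_1\to\cdots\to t_{a_1-1}$, giving $i\cdot a_1$ submodules. These are precisely the submodules $L_1\oplus L_2$ indexed by pairs $(j,\ell)$ that the paper exhibits directly; your write-up adds the exhaustiveness argument (the backward propagation of vanishing) that the paper leaves implicit, which is a genuine plus in rigor.

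However, your propagation step is stated incorrectly at the boundary of the lemma's range. For $i=a_2$, Definition~\ref{def:M(ri)} gives $M(r_{a_2})_{r_{a_2}}=0$ and $M(r_{a_2})_{s_j}=0$ for all $j$ (indeed $M(r_{a_2})=P(r_0)$ by \cref{rem:proj_inj}), so the maps on $r_{a_2-1}\to r_{a_2}$ and $s_{a_1-1}\to r_{a_2}$ are \emph{zero}, not the identity as you assert. Consequently your claim that $N_{r_{a_2}}=0$ forces $N_{r_{a_2-1}}=0$ is false in this case --- and it had better be false, since your own residual-freedom count needs $r_{a_2-1}$ to remain free: the chain $r_1\to\cdots\to r_{i-1}=r_1\to\cdots\to r_{a_2-1}$ must contribute $a_2$ choices to reach the total $a_2\,a_1$. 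As written, the first sentence of your propagation contradicts your final count when $i=a_2$ (a similar, though harmless because vacuous, misstatement occurs at $i=a_2-1$, where $r_{a_2-1}=r_i$ is simply outside the support). The fix is one line: for $i=a_2$ the hypotheses at $r_{a_2}$ and the vanishing along the $s$-chain hold vacuously because the module is not supported there, and the counting argument then proceeds unchanged; for $i\le a_2-1$ your propagation is correct as stated. With that patch the proof is complete and agrees with the paper's, including the consistency check $a_1(a_2-i)+1+i\,a_1=a_1a_2+1$ against \cref{lem:submod_supp_ra2} and \cref{lem:submod_supp_r0}.
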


\begin{proof}
    For each pair $(j,\ell)$ such that  $1 \leq j \leq i$ and $1 \leq \ell \leq a_1$, we have a submodule
    \begin{align*}
        \begin{smallmatrix}
            r_j \\
            \vdots \\
            r_{i-1}
        \end{smallmatrix} \oplus
        \begin{smallmatrix}
            t_{\ell} \\
            \vdots \\
            t_{a_1-1}
        \end{smallmatrix}  =: L_1 \oplus L_2.
    \end{align*}
    Note that if $j = i$, then we take $L_1 = 0$. Similarly, if $\ell = a_1$, then we take $L_2 = 0$. There are exactly $i\,a_1$ such submodules.
    \end{proof}
    
    \begin{proposition}
        Let $r_i$ be any vertex on the cycle in $Q$. Then $M(r_i)$ has exactly $a_1a_2+1$ submodules.
    \end{proposition}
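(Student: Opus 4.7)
The plan is to split into the cases $i=0$ and $i \geq 1$ and combine the three preceding lemmas, with a small extra argument handling $i=0$ where one of the lemmas does not directly apply.

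For $i \geq 1$, I would first observe that every submodule $N \subseteq M(r_i)$ falls into exactly one of three mutually exclusive and exhaustive classes, according to a dichotomy on the support: (a) $N$ is supported at $r_{a_2}$; (b) $N$ is not supported at $r_{a_2}$ but is supported at $r_0$; (c) $N$ is supported at neither $r_{a_2}$ nor $r_0$. \Cref{lem:submod_supp_ra2}, \Cref{lem:submod_supp_r0}, and \Cref{lem:submod_rest} then count each class exactly. It is worth noting that the zero submodule sits in class (c), realized by the parameters $(j,\ell) = (i, a_1)$ in the proof of \Cref{lem:submod_rest} (both $L_1$ and $L_2$ become zero), so no submodule is double-counted or omitted. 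Summing gives
\[
a_1(a_2 - i) \;+\; 1 \;+\; i\,a_1 \;=\; a_1 a_2 + 1,
\]
which is the desired count.

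For $i = 0$, \Cref{lem:submod_supp_r0} and \Cref{lem:submod_rest} are not stated, but they are essentially trivial here: by \Cref{def:M(ri)} the module $M(r_0)$ is zero at $r_0$ (and at every $t_j$), so no submodule can be supported at $r_0$, eliminating class (b), and class (c) reduces to counting submodules not supported at $r_{a_2}$. The key observation is that inside $M(r_0)$ every arrow in the support has target eventually leading to $r_{a_2}$ (the paths $r_1 \to \cdots \to r_{a_2}$ and $s_1 \to \cdots \to s_{a_1-1} \to r_{a_2}$), and the arrow $r_{a_2} \to r_0$ acts as zero since $M(r_0)_{r_0} = 0$. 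Thus $r_{a_2}$ is the unique sink of the support, $S(r_{a_2})$ is the unique simple submodule, and consequently every nonzero submodule is supported at $r_{a_2}$. Combining this with \Cref{lem:submod_supp_ra2}, which gives $a_1 a_2$ nonzero submodules supported at $r_{a_2}$, and adding the zero submodule gives $a_1 a_2 + 1$ total.

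There is no real obstacle: the three lemmas have already done the work, and the only subtlety is bookkeeping the zero submodule (absorbed into class (c) for $i \geq 1$, handled directly for $i=0$) and justifying that the case analysis on $\mathrm{supp}(N) \cap \{r_0, r_{a_2}\}$ is exhaustive and mutually exclusive. I expect the write-up to be very short.
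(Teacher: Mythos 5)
Your proposal is correct and takes essentially the same route as the paper: for $i\ge 1$ you combine \cref{lem:submod_supp_ra2}, \cref{lem:submod_supp_r0}, and \cref{lem:submod_rest} via the same trichotomy on the support at $r_{a_2}$ and $r_0$, and for $i=0$ your unique-sink argument showing $S(r_{a_2})$ is the only simple submodule is just a hands-on verification of what the paper obtains by noting $M(r_0)=I(r_{a_2})$, namely that every nonzero submodule is supported at $r_{a_2}$. Your extra care in locating the zero submodule (inside class (c) via $(j,\ell)=(i,a_1)$ when $i\ge 1$, added separately when $i=0$) matches the paper's implicit bookkeeping.
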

    
    \begin{proof}
         If $i=0$, then $M(r_0)$ has exactly $a_1a_2$ submodules supported at $r_{a_2}$, by \cref{lem:submod_supp_ra2}. Note that $M(r_0)=I(r_{a_2})$ is the injective at $r_{a_2}$. Thus the only other submodule of $M(r_0)$ is the zero module.
         
         Now suppose $i\ne 0$. Then  Lemmata~\ref{lem:submod_supp_ra2}--\ref{lem:submod_rest} yield an exhaustive list of the $a_1a_2+1$ submodules of $M(r_i)$.
%
    \end{proof}

\section{Computation of \texorpdfstring{$\tau T$}{TT}.} \label{sec:tau_M}
In this section, we compute the Auslander-Reiten (AR) translate of each indecomposable summand of the $A$-module $T$ defined in \cref{sec:def_of_M}. 

First, we compare the indecomposable summands of $T$ the indecomposible projectives and injectives of $A$, and we see the following.

\begin{remark}\label{rem:proj_inj}
    We have
\[    \begin{array}{rclcrclcrcl}
        M(r_{a_2-1}) &=& P(s_1), &\quad&
        M(r_{a_2}) &=& P(r_0),  &\quad&
        M(t_{a_1-1}) &=& P(r_1),\\[0.5pt]
        M(r_0) &=& I(r_{a_2}),  &\quad&
        M(r_1) &=& I(t_{a_1-1}), &\quad&
        M(s_1) &=& I(r_{a_2-1}).
    \end{array}\] 
\end{remark}
For the rest of this section, we use the notation  $L=(L_x, \varphi_\alpha)_{x\in Q_0,\alpha\in Q_1}$ for an $A$-module $L$.
\begin{lemma}\label{lem:tau_M(ri)}
      The modules $\tau M(r_{a_2})$ and $\tau M(r_{a_2-1})$ are zero, and for $i=0,1,\ldots, a_2-2$ we have
    \begin{align*}
        &\tau M(r_i) = 
        \begin{smallmatrix}
            s_2 & & r_{i+2} \\
            s_3 & & r_{i+3} \\
            \vdots & & \vdots \\
            s_{a_1-1} & & r_{a_2-1} \\
            & r_{a_2}&
        \end{smallmatrix},
        \quad\quad\quad &\tau M(r_i)_x =
        \begin{cases}
            0 &\text{if $x=r_j$ for any $0 \leq j \leq i+1$;} \\
            0 &\text{if $x=t_j$ for any $j$;} \\
            0 &\text{if $x=s_1$;} \\
            k &\text{otherwise.}
        \end{cases}
    \end{align*}
    Furthermore,  the projective dimension of $M(r_i)$ is $1$.
\end{lemma}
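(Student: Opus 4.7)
The plan is to handle the statement in two parts. The vanishing of $\tau M(r_{a_2})$ and $\tau M(r_{a_2-1})$ will follow immediately from Remark \ref{rem:proj_inj}, which identifies both $M(r_{a_2}) = P(r_0)$ and $M(r_{a_2-1}) = P(s_1)$ as indecomposable projectives. For the remaining cases $i \in \{0, 1, \ldots, a_2-2\}$, I will produce an explicit minimal projective resolution of $M(r_i)$ and apply the Nakayama functor $\nu = D\operatorname{Hom}_A(-,A)$, using the Auslander-Reiten formula $\tau L = \ker(\nu P^1 \to \nu P^0)$ together with the fact that $\nu$ sends $P(x)$ to $I(x)$.

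To construct the resolution, I will first identify the projective cover of $M(r_i)$. Direct inspection of the structure of $M(r_i)$ shows that $r_{i+1}$ and $s_1$ are precisely the vertices in the support having no incoming arrow from within the support, so $M(r_i)/\operatorname{rad} M(r_i) = S(r_{i+1}) \oplus S(s_1)$ and the projective cover is $P^0 = P(r_{i+1}) \oplus P(s_1)$. The claim is then that the first syzygy is $P^1 = P(r_{a_2})$, with syzygy map $\psi = (\alpha, -\beta)$, where $\alpha$ is the unique cycle path $r_{i+1} \to r_{i+2} \to \cdots \to r_{a_2}$ and $\beta$ is the unique branch path $s_1 \to s_2 \to \cdots \to s_{a_1-1} \to r_{a_2}$. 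This gives the short exact sequence
\[
  0 \to P(r_{a_2}) \xrightarrow{\psi} P(r_{i+1}) \oplus P(s_1) \xrightarrow{\pi} M(r_i) \to 0.
\]
Middle exactness and injectivity of $\psi$ are to be verified vertex by vertex, using Remark \ref{rem:M(ri)_min_path} to decide which minimal paths survive in $M(r_i)$. This simultaneously yields $\operatorname{pd}_A M(r_i) = 1$, since $M(r_i)$ is not itself projective (its support is not of the form $\operatorname{supp} P(y)$ for any $y \in Q_0$).

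Applying $\nu$ to the resolution produces the injective copresentation $0 \to \tau M(r_i) \to I(r_{a_2}) \to I(r_{i+1}) \oplus I(s_1)$. Since $I(r_{a_2}) = M(r_0)$ by Remark \ref{rem:proj_inj}, I can work inside an explicit model to identify the kernel. The map $\nu\psi$ is induced by the same two paths $\alpha, \beta$ viewed now as morphisms between injectives; a direct vertex-by-vertex computation shows its kernel is obtained from $M(r_0)$ by removing $s_1$ and $r_1, r_2, \ldots, r_{i+1}$, which agrees exactly with the module displayed in the lemma.

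The main technical obstacle will be the verification of middle exactness in the projective resolution. The two projectives $P(r_{i+1})$ and $P(s_1)$ have large overlapping supports, and tracking the contributions of each basis vector to $M(r_i)$ requires careful use of the Jacobian cycle relations (paths of length $a_2$ around the cycle vanish in $A$). The boundary case $i = 0$ will require slight extra care, since $P(r_1)$ has the smaller support $\{r_1, \ldots, r_{a_2}\}$ (excluding $r_0$ and the $t$-branch), so the overlap with $P(s_1)$ inside $M(r_0)$ behaves differently; nevertheless the syzygy is still $P(r_{a_2})$, as can be checked by the same kind of dimension vector and path computation.
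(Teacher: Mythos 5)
Your proposal is correct and follows essentially the same route as the paper: the projective cases $M(r_{a_2})=P(r_0)$, $M(r_{a_2-1})=P(s_1)$ handled via Remark~\ref{rem:proj_inj}, then the projective resolution $0 \to P(r_{a_2}) \to P(r_{i+1}) \oplus P(s_1) \to M(r_i) \to 0$ followed by the Nakayama functor and a support computation of the kernel of $I(r_{a_2}) \to I(r_{i+1}) \oplus I(s_1)$. The only difference is that you promise more detail (explicit syzygy map, vertex-by-vertex exactness check, and the $i=0$ boundary case) where the paper simply asserts the resolution; this extra care is harmless and the approach is sound.
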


\begin{proof} Remark \ref{rem:proj_inj} implies that $\tau M(r_{a_2})$ and $\tau M(r_{a_2-1})$ are zero. Suppose now that $0\le i\le a_2-2$.
    We have the projective resolution
    \begin{align*}
        0 \to P(r_{a_2}) \xrightarrow{f} P(r_{i+1}) \oplus P(s_1) \to M(r_i) \to 0.
    \end{align*}
    This shows that $\operatorname{pd} M(r_i) = 1$. By applying the Nakayama functor $\nu$, we get the exact sequence
    \begin{align*}
        0 \to \tau M(r_i) \to I(r_{a_2}) \xrightarrow{\nu f} I(r_{i+1}) \oplus I(s_1).
    \end{align*} 
    The map $I(r_{a_2}) \to I(s_1)$ is nonzero only at the vertex $s_1$, so $\tau M(r_i)_{s_i} = 0$. The map $I(r_{a_2}) \to I(r_{i+1})$ is nonzero at the vertices $r_1, ..., r_{i+1}$, so $\tau M(r_i)$ is zero at these vertices. Also, both modules $I(r_{a_2})$ and $I(r_{i+1}) \oplus I(s_1)$ have no support at vertices labeled $t_j$, so $\tau M(r_i)$ is zero at these vertices. This completes the proof.
\end{proof}

\begin{lemma}\label{lem:tau_M(ti)}  The module   $\tau M(t_{a_1-1})$ is zero, and for $i=0,1,\ldots ,a_1-2$ we have
    \begin{align*}
        &\tau M(t_i) = 
        \begin{smallmatrix}
            s_{i+2} & & r_{2} \\
            s_{i+3} & & r_{3} \\
            \vdots & & \vdots \\
            s_{a_1-1} & & r_{a_2-1} \\
            & r_{a_2}&
        \end{smallmatrix},
        \quad\quad\quad &
        \tau M(t_i)_x =
        \begin{cases}
            0 &\text{if $x=r_0$ or $x=r_1$;} \\
            0 &\text{if $x=t_j$ for any $j$;} \\
            0 &\text{if $x=s_j$ for any $1 \leq j \leq i+1$;} \\
            k &\text{otherwise.}
        \end{cases}
    \end{align*}
    Furthermore, the projective dimension of $M(t_i) $ is $ 1$.
\end{lemma}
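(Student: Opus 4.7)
The argument parallels \cref{lem:tau_M(ri)}: the plan is to construct a minimal projective presentation of $M(t_i)$ and then apply the Nakayama functor $\nu$. The case $i=a_1-1$ is immediate from \cref{rem:proj_inj}, since $M(t_{a_1-1})=P(r_1)$ is projective, so $\tau M(t_{a_1-1})=0$. I therefore fix $0\le i\le a_1-2$.

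Looking at the representation in \cref{def:M(ti)}, the only vertices in the support of $M(t_i)$ with no incoming arrow from another support vertex are $r_1$ (whose predecessor $r_0$ is not in the support) and $s_{i+1}$ (whose predecessor $s_i$, when it exists, is not in the support). Hence $\operatorname{top} M(t_i)=S(r_1)\oplus S(s_{i+1})$, and the projective cover is $P(r_1)\oplus P(s_{i+1})\twoheadrightarrow M(t_i)$. A vertex-by-vertex dimension count identifies the kernel as $P(r_{a_2})$, embedded via the pair of paths $r_1\to\cdots\to r_{a_2}$ and $-(s_{i+1}\to\cdots\to r_{a_2})$ into the $r_{a_2}$-components of $P(r_1)$ and $P(s_{i+1})$, respectively. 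This yields the short exact sequence
\[0\to P(r_{a_2})\xrightarrow{f} P(r_1)\oplus P(s_{i+1})\to M(t_i)\to 0,\]
establishing $\operatorname{pd} M(t_i)=1$. Applying $\nu$ gives the exact sequence
\[0\to \tau M(t_i)\to I(r_{a_2})\xrightarrow{\nu f} I(r_1)\oplus I(s_{i+1}).\]

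To finish, I analyze the two components of $\nu f$. For $I(r_{a_2})\to I(s_{i+1})$: a path $s_j\to\cdots\to r_{a_2}$ factors through $s_{i+1}\to\cdots\to r_{a_2}$ if and only if $j\le i+1$, so this component is nonzero exactly at $s_1,\ldots,s_{i+1}$. The key step, which I expect to be the main technical point, is the map $I(r_{a_2})\to I(r_1)$ induced by the long path $r_1\to r_2\to\cdots\to r_{a_2}$ of length $a_2-1$: here the Jacobian relations are crucial, as they force the only nonzero path $r_j\to r_{a_2}$ in $A$ (for $1\le j\le a_2$) to be the direct one, so a factorization through $r_1$ exists only when $j=1$. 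Hence this component of $\nu f$ is nonzero only at $r_1$. Removing the vertices $\{r_1,s_1,\ldots,s_{i+1}\}$ where $\nu f$ does not vanish from the support $\{r_1,\ldots,r_{a_2},s_1,\ldots,s_{a_1-1}\}$ of $I(r_{a_2})$ leaves precisely $\{r_2,\ldots,r_{a_2},s_{i+2},\ldots,s_{a_1-1}\}$, the claimed support of $\tau M(t_i)$, with all structure maps $\varphi_\alpha$ inherited as scalars $1$ from $I(r_{a_2})$.
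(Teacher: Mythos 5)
Your proposal is correct and matches the paper's proof: the paper uses exactly the same minimal projective resolution $0 \to P(r_{a_2}) \xrightarrow{f} P(r_1) \oplus P(s_{i+1}) \to M(t_i) \to 0$ (with the $i=a_1-1$ case dispatched via \cref{rem:proj_inj}), concludes $\operatorname{pd} M(t_i)=1$, and then applies the Nakayama functor as in \cref{lem:tau_M(ri)}. The only difference is that the paper leaves the component-wise analysis of $\nu f$ implicit (``analogous to \cref{lem:tau_M(ri)}''), whereas you carry it out explicitly, including the key use of the Jacobian relations to show the component $I(r_{a_2}) \to I(r_1)$ is nonzero only at $r_1$.
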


\begin{proof} Remark \ref{rem:proj_inj} implies that $\tau M(t_{a_1-1})$ is zero. Suppose now that $0\le i\le a_2-2$.
    We have the projective resolution
    \begin{align*}
        0 \to P(r_{a_2}) \xrightarrow{f} P(r_1) \oplus P(s_{i+1}) \to M(t_i) \to 0.
    \end{align*}
    This shows that $\operatorname{pd} M(t_i) = 1$. The rest of the proof is analogous to \cref{lem:tau_M(ri)}.
\end{proof}

\begin{lemma}\label{lem:tau_M(si)} 
          For  $i=1,2,\ldots, a_1 - 1$, we have     \begin{align*}
        &\tau M(s_i) = 
        \begin{smallmatrix}
            t_1 \\
            t_2 \\
            \vdots \\
            t_i
        \end{smallmatrix},
        \quad\quad\quad &
        \tau M(s_i)_x =
        \begin{cases}
            0 &\text{if $x = r_j$ for any $j$;} \\
            0 &\text{if $x = t_j$ for any $j > i$;} \\
            0 &\text{if $x = s_j$ for any $j$;} \\
            k &\text{otherwise.}
        \end{cases}
    \end{align*}
     Furthermore, the projective dimension of $M(s_i) $ is $ 1$.
\end{lemma}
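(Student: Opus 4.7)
The plan is to mirror the proofs of Lemmas~\ref{lem:tau_M(ri)} and~\ref{lem:tau_M(ti)}. The defining short exact sequence
\[0 \to P(t_i) \xrightarrow{g} P(r_0) \to M(s_i) \to 0\]
from Definition~\ref{def:M(si)} is already a projective presentation of $M(s_i)$; the map $g$ is multiplication by the nonzero path $p: r_0 \to t_1 \to \cdots \to t_i$, and since $P(t_i)$ is uniserial with simple top at $t_i$, this map is injective. The sequence therefore becomes a length-one projective resolution, so $\operatorname{pd} M(s_i) = 1$.

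Applying the Nakayama functor $\nu$ yields the left exact sequence
\[0 \to \tau M(s_i) \to I(t_i) \xrightarrow{\nu g} I(r_0),\]
so the task reduces to identifying $\ker(\nu g)$ vertex by vertex. I would first read off the support of $I(t_i)$ from the shape of $Q$ together with the potential relations: the vertices $z$ admitting a nonzero path to $t_i$ are exactly $t_1, \ldots, t_i$, $r_0$, $r_2, \ldots, r_{a_2}$, and $s_1, \ldots, s_{a_1-1}$, each with a one-dimensional stalk. The vertex $r_1$ drops out because the path $r_1 \to r_2 \to \cdots \to r_{a_2} \to r_0$ has length $a_2$ and is a subpath of the potential $W$, hence zero in $A$. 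For the target, $I(r_0)$ is supported on the same set \emph{except} it vanishes at every $t_j$.

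The kernel computation then splits into two cases. At each $z = t_j$ with $1 \le j \le i$ the target $I(r_0)_z$ is zero, so $(\nu g)_z = 0$ and the kernel contributes a copy of $k$. At each remaining vertex $z \in \{r_0, r_2, \ldots, r_{a_2}, s_1, \ldots, s_{a_1-1}\}$ in the support of $I(t_i)$, the unique nonzero path from $z$ to $t_i$ factors through $r_0$ as $q' \cdot p$, where $q': z \to r_0$ is itself a nonzero path in $A$; hence $(\nu g)_z$ sends the generator to $q' \neq 0$ and is therefore an isomorphism $k \to k$, so the kernel vanishes at $z$. Inheriting the identity maps along the arrows $t_j \to t_{j+1}$ from $I(t_i)$, we obtain precisely the module described in the statement.

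The step requiring the most care, as in the preceding two lemmas, is the bookkeeping of which composite paths survive the relations induced by $W$; this is exactly what forces the split of the cycle vertices into $r_1$ (excluded from the support of $I(t_i)$) versus $r_2, \ldots, r_{a_2}$ (included and mapped isomorphically by $\nu g$). Once the supports of $I(t_i)$ and $I(r_0)$ are correctly determined, the remainder of the argument is mechanical.
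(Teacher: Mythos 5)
Your proposal is correct and follows essentially the same route as the paper: the paper's proof also takes the defining presentation $0 \to P(t_i) \to P(r_0) \to M(s_i) \to 0$ to conclude $\operatorname{pd} M(s_i) = 1$, applies the Nakayama functor, and computes the kernel vertexwise exactly as in \cref{lem:tau_M(ri)} (the paper simply says "analogous" where you spell the details out). Your bookkeeping is accurate throughout -- in particular the observation that $r_1$ leaves the support of $I(t_i)$ because $r_1 \to \cdots \to r_{a_2} \to r_0$ is a length-$a_2$ subpath of $W$, and that $(\nu g)_z$ is an isomorphism precisely at the vertices where the unique path to $t_i$ factors through a nonzero path to $r_0$.
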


\begin{proof}
    By definition of $M(s_i)$, we have the projective resolution
    \begin{align*}
        0 \to P(t_i) \xrightarrow{f} P(r_0) \to M(s_i) \to 0.
    \end{align*}
    This shows that $\operatorname{pd} M(s_i) = 1$. The rest of the proof is analogous to \cref{lem:tau_M(ri)}.
\end{proof}

\begin{proposition}\label{prop:pd_M}
    The module $T$ from \cref{def:M} has projective dimension $1$.
\end{proposition}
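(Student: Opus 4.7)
The proof will be an immediate bookkeeping consequence of Lemmas~\ref{lem:tau_M(ri)}, \ref{lem:tau_M(ti)}, and \ref{lem:tau_M(si)}, together with the standard fact that the projective dimension of a finite direct sum equals the maximum of the projective dimensions of the summands. So the plan is simply to assemble what has already been computed summand by summand.

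Concretely, I would argue as follows. By \cref{def:M}, $T$ decomposes as the direct sum of the modules $M(r_i)$ for $0\le i\le a_2$, $M(s_i)$ for $1\le i\le a_1-1$, and $M(t_i)$ for $1\le i\le a_1-1$. Among these, \cref{rem:proj_inj} identifies $M(r_{a_2-1})$, $M(r_{a_2})$, and $M(t_{a_1-1})$ as indecomposable projectives, so they contribute projective dimension $0$. For each of the remaining summands, the three preceding lemmas exhibit an explicit projective resolution of length exactly one (of the form $0\to P(r_{a_2})\to P\oplus P'\to M\to 0$ in the $M(r_i)$ and $M(t_i)$ cases, and $0\to P(t_i)\to P(r_0)\to M(s_i)\to 0$ in the $M(s_i)$ case). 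Consequently every indecomposable summand of $T$ has projective dimension at most $1$, and at least one of them (e.g.\ $M(s_1)$, or any $M(r_i)$ with $i\le a_2-2$) has projective dimension exactly $1$. Taking the supremum over summands yields $\operatorname{pd}_A T=1$.

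There is no real obstacle here since the substantive work was already done in Section~\ref{sec:tau_M}: once one has committed to the short exact sequences of \cref{def:M(ti)} and \cref{def:M(si)} as the defining data (and verified by hand that the map $P(r_{a_2})\to P(r_{i+1})\oplus P(s_1)$ is injective for the $M(r_i)$), the projective dimension statement is a one-line corollary. The only minor care needed is to note that in the trivially small cases (such as $a_1=1$, in which the $M(s_i)$ and $M(t_i)$ do not occur) one still has some non-projective $M(r_i)$ as long as $a_2\ge 2$, ensuring the projective dimension is attained and not merely bounded.
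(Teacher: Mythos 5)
Your proof is correct and takes essentially the same route as the paper, which likewise deduces the proposition immediately from Lemmata~\ref{lem:tau_M(ri)}--\ref{lem:tau_M(si)}, where the explicit length-one projective resolutions of the non-projective summands of $T$ were recorded. Your additional bookkeeping (projective summands contributing dimension $0$, and the existence of at least one non-projective summand so that $\operatorname{pd} T$ is exactly $1$ rather than merely $\le 1$) only spells out details the paper leaves implicit.
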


\begin{proof}
This follows immediately from  Lemmata \ref{lem:tau_M(ri)}--\ref{lem:tau_M(si)}.
\end{proof}


\section{\texorpdfstring{$T$}{T} is a tilting and a \texorpdfstring{$\tau$}{tau}-tilting module.}\label{sec:tilting_module}
In this section, we show that $T$ is a tilting $A$-module and a $\tau$-tilting $A$-module. We then show that $T$ induces a cluster-tilting object in the cluster category.

Recall that an $A$-module $T$ is called a \emph{tilting module} if $\textup{Ext}^1(T,T)=0$, $\operatorname{pd}T\le 1$, and the number $|T|$ of isoclasses of indecomposable summands of $T$ is equal to the number of vertices $|Q_0|$ of $Q$. Furthermore, $T$ is said to be \emph{$\tau$-tilting} in the sense of \cite{AIR} if $\operatorname{Hom}(T, \tau T) = 0$ and $|T|=|Q_0|$. Tilting modules and their endomorphism algebras play a central role in representation theory, see \cite{AsSiSk} for an introduction. 
 
Recall that an $A$-module $L = (L_x, \varphi_\alpha)$ is called \textit{thin} if all vector spaces $L_x$ are of dimension at most one.
        Let $Q(L)$ denote the subquiver of $Q$ containing all vertices $x \in Q_0$ such that $L_x \neq 0$ and all arrows $\alpha \in Q_1$ such that $\varphi_\alpha \neq 0$.   
          A thin module $L$ is indecomposable if and only if $Q(L)$ is connected.

\begin{remark}
 For every vertex $x$, the $A$-module $M(x)$ is thin and indecomposable.
\end{remark}
    
    \begin{lemma}\label{lem:zero_path}
        Let $L = (L_x, \varphi_\alpha)$ be a thin $A$-module and let $N = (N_x, \varphi_\alpha')$ be any $A$-module. Let $f \in \operatorname{Hom} (L, N)$ and let $w$ be a nonzero path in $Q(L)$ from a vertex $x$ to a vertex $y$. If $f_x = 0$, then $f_y = 0$.
    \end{lemma}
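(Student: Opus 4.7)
The plan is to propagate the vanishing of $f$ along the path $w$ one arrow at a time, using that every arrow of $Q(L)$ acts by an isomorphism on the thin module $L$.

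First I would unpack what it means for $w$ to be a nonzero path in $Q(L)$: every vertex $z$ traversed by $w$ satisfies $L_z \neq 0$, and every arrow $\alpha$ traversed by $w$ satisfies $\varphi_\alpha \neq 0$. Combined with thinness of $L$, this forces $L_z \cong k$ for each such vertex and $\varphi_\alpha \colon k \to k$ to be nonzero, hence an isomorphism.

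Next, for a single arrow $\alpha \colon z \to z'$ of $w$, the commutative square expressing that $f$ is a morphism of representations gives $\varphi_\alpha' \circ f_z = f_{z'} \circ \varphi_\alpha$. If $f_z = 0$, then the left-hand side vanishes, so $f_{z'} \circ \varphi_\alpha = 0$; since $\varphi_\alpha$ is invertible by the previous paragraph, this forces $f_{z'} = 0$. I would then induct on the length of $w$: starting from the hypothesis $f_x = 0$ and applying the arrow-by-arrow step above to each successive arrow of $w$, we conclude $f_y = 0$.

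There is essentially no obstacle here; the only subtlety is remembering to use both the vertex condition and the arrow condition in the definition of $Q(L)$, so that each arrow of $w$ really does act as an isomorphism on $L$. The thinness of $L$ is used only to identify each nonzero $L_z$ with $k$ so that a nonzero linear map between them is automatically bijective.
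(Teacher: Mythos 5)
Your proof is correct and takes essentially the same approach as the paper: the paper simply writes the single commutative square for the composite map $\varphi_w$ along the whole path, notes that thinness makes $\varphi_w$ invertible, and reads off $f_y = \varphi'_w f_x \varphi_w^{-1} = 0$, whereas you propagate the same vanishing arrow by arrow by induction. The difference is purely cosmetic.
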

    
    \begin{proof}
        We have a commutative diagram:
            \[ \begin{tikzcd} L_x \arrow{r}{\varphi_w} \arrow[swap]{d}{f_x} & L_y \arrow{d}{f_y} \\ N_x \arrow{r}{\varphi'_w}& N_y \end{tikzcd} \]
    Since $L$ is thin, the map $\varphi_w$ is invertible, so $f_y = \varphi'_w f_x \varphi_w^{-1}$.
    \end{proof}
    
    \begin{lemma}\label{lem:M(r_i)_path}
        Let $L = M(r_i) = (M(r_i)_x, \varphi_\alpha)$ with $i \neq a_2$ and let $N = (N_x, \psi_\alpha)$ be indecomposable and thin. Suppose $\operatorname{Hom} (L,N) \neq 0$. Then $N_{s_1} \neq 0$ or there exists a vertex $r_j$ with $j \neq a_2-1, a_2$ such that $S(r_j)$ is a summand of the socle of $N$.
    \end{lemma}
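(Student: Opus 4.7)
The plan is to track the support $U = \{x \in Q_0 : f_x \neq 0\}$ of a given nonzero morphism $f \in \operatorname{Hom}(L, N)$. Since both modules are thin, $U$ is a nonempty subset of $Q(L) \cap Q(N)$, and the contrapositive of \cref{lem:zero_path} tells us that $U$ is closed under predecessors in $Q(L)$: if there is a nonzero $Q(L)$-path from $x$ to a vertex of $U$, then $x \in U$.

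Because $i \neq a_2$, the subquiver $Q(L)$ is the tree obtained from $Q$ by removing $r_i$ (and also the $t$-branch when $i = 0$), and it has exactly two sources: $s_1$ and $r_{i+1}$. A nonempty predecessor-closed subset of a connected tree must contain a source, so $s_1 \in U$ or $r_{i+1} \in U$. In the first case $f_{s_1}$ is a nonzero map between one-dimensional spaces, forcing $N_{s_1} \neq 0$; this is the first disjunct of the conclusion.

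If instead $s_1 \notin U$, then forward propagation of zeros through $s_1 \to s_2 \to \cdots \to s_{a_1-1} \to r_{a_2}$ drives $r_{a_2}$ out of $U$, and continuing along $r_{a_2} \to r_0 \to r_1 \to \cdots \to r_{i-1}$ and $r_0 \to t_1 \to \cdots \to t_{a_1-1}$ forces $U \subseteq \{r_{i+1}, \ldots, r_{a_2-1}\}$. Predecessor-closure combined with $r_{i+1} \in U$ then yields $U = \{r_{i+1}, \ldots, r_m\}$ for some $m$ with $i+1 \leq m \leq a_2-1$. The only outgoing arrow from $r_m$ in $Q$ is $\alpha\colon r_m \to r_{m+1}$, and the commutation $f_{r_{m+1}} \varphi_\alpha = \psi_\alpha f_{r_m}$ with $f_{r_{m+1}} = 0$ and $f_{r_m} \neq 0$ forces $\psi_\alpha = 0$; hence $r_m$ is a sink of $Q(N)$ and $S(r_m)$ is a summand of $\operatorname{soc} N$.

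The hard part will be excluding the borderline value $m = a_2 - 1$ to match the stated range $j \neq a_2 - 1, a_2$, since the predecessor-closure argument a priori only produces $m \leq a_2 - 1$. Handling this case should require an additional input beyond the forward/backward propagation of zeros -- most likely the Jacobian cycle relation (the length-$a_2$ composition of the $\psi$'s around the cycle must vanish in $N$) together with the indecomposability and thinness of $N$, to produce an earlier sink in $Q(N)$ or else to force $N_{s_1} \neq 0$ after all.
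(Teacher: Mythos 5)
Your argument, in the part you completed, is correct and is essentially the paper's own proof in different clothing: the paper also assumes $N_{s_1}=0$, propagates zeros from $s_1$ via \cref{lem:zero_path} to conclude $f_{r_{a_2}}=0$, and then takes the \emph{last} vertex $r_j$ on the path $w\colon r_{i+1}\to\cdots\to r_{a_2}$ with $f_{r_j}\neq 0$; your predecessor-closed support set $U$ produces the same interval $\{r_{i+1},\ldots,r_m\}$ and the same commutative-square argument showing $\psi_\alpha=0$ for the unique arrow $\alpha\colon r_m\to r_{m+1}$, whence $S(r_m)\subseteq\operatorname{soc}N$ with $i+1\le m\le a_2-1$.

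The ``hard part'' you flag, however, is not something you can fix, because the exclusion $j\neq a_2-1$ in the statement is false as written (for $a_2\ge 2$). Take $i=a_2-2$ and $N=S(r_{a_2-1})$: since $\operatorname{top}M(r_{a_2-2})=S(r_{a_2-1})\oplus S(s_1)$, the projection onto the first summand is a nonzero morphism $M(r_{a_2-2})\to N$, while $N$ is thin and indecomposable with $N_{s_1}=0$ and $\operatorname{soc}N=S(r_{a_2-1})$, so no $r_j$ with $j\neq a_2-1,a_2$ exists. Your proposed rescue via the Jacobian relations cannot work: the relevant path $r_{i+1}\to\cdots\to r_{a_2-1}$ has length at most $a_2-2$, strictly shorter than every relation (the subpaths of $W$ of length $a_2$), so nothing forces $\psi_{w'}$ or an earlier arrow to vanish. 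The paper's own proof has exactly this defect: it asserts ``$1\le i+1\le j<a_2-1$'' without justification, and only $j\le a_2-1$ actually follows from $f_{r_{a_2}}=0$. What your argument fully proves is the corrected statement: $N_{s_1}\neq 0$ or $S(r_j)$ is a summand of $\operatorname{soc}N$ for some $1\le j\le a_2-1$. This weaker form is all that is ever used: in the proof of \cref{lem:ext_M} the paper itself invokes the lemma in precisely this form (``no summand of $\tau T$ has $S(r_j)$ with $1\le j\le a_2-1$ in its socle''), and the computations of \cref{sec:tau_M} show the socles of the summands of $\tau T$ contain only $S(r_{a_2})$ and $S(t_i)$ --- so $j=a_2-1$ is excluded there anyway. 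So stop trying to rule out $m=a_2-1$; your proof is complete for the correct (and, for the paper's purposes, sufficient) version of the lemma, and the discrepancy is an error in the paper's statement, not a gap in your argument.
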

    
    \begin{proof}
        Let $f \in \operatorname{Hom} (L,N)$ be a nonzero morphism. Suppose $N_{s_1} = 0$. Then \cref{lem:zero_path} implies $f_y = 0$ for all vertices $y$ that can be reached from $s_1$ by a path in $Q(L)$. In particular, $f_{r_{a_2}} = 0$.
        
        We have $\operatorname{top} L = S(r_{i+1}) \oplus S(s_1)$. Now consider the path $w : r_{i+1} \to \cdots \to r_{a_2}$. Note that $w$ runs through all vertices of $Q(L)$ which cannot be reached by a path from $s_1$ in $Q(L)$, plus the vertex $r_{a_2}$. Since $f$ is nonzero, it must be nonzero at some vertex on $w$. By \cref{lem:zero_path}, we see that $f$ must be nonzero at the vertex $r_{i+1}$. Now let $r_j$ be the last vertex in $w$ such that $f_{r_j} \neq 0$. Denote by $w'$ the subpath of $w$ from $r_{i+1}$ to $r_j$. Then we have a commutative diagram:
            \[ \begin{tikzcd} L_{r_{i+1}} \arrow{r}{\varphi_{w'}} \arrow[swap]{d}{0 \neq f_{r_{i+1}}} & L_{r_j} \arrow{r}{\varphi_\alpha} \arrow{d}{f_{r_j} \neq  0} & L_{r_{j+1}} \arrow{d}{f_{r_{j+1}} = 0} \\ N_{r_{i+1}} \arrow{r}{\psi_w} & N_{r_j} \arrow{r}{\psi_\alpha} & N_{r_{j+1}} \end{tikzcd} \]
        where $\alpha$ is the arrow $r_j \to r_{j+1}$. The left square implies $\psi_{w} \neq 0$ and the right square implies $\psi_\alpha = 0$. The arrow $\alpha$ is the only arrow whose source is $r_j$, since $1 \leq i+1 \leq j < a_2-1$. This implies that $S(r_j)$ is a summand of the socle of $N$.
    \end{proof}
    
    \begin{lemma}\label{lem:M(a2)M(si)_r0_support}
        Let $L = M(r_{a_2})$ or $M(s_i)$ with $i = 1,2, \cdots, a_1-1$ and let $N$ be indecomposable and thin. Suppose $\operatorname{Hom} (L,N) \neq 0$. Then $N_{r_0} \neq 0$.   
    \end{lemma}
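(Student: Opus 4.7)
The plan is to reduce both cases to a single observation about the top of $L$: in both cases $L$ has simple top $S(r_0)$, and moreover $r_0$ is a ``global source'' of $Q(L)$ in the sense that every vertex of $Q(L)$ is reachable from $r_0$ by a directed path in $Q(L)$. Given this, the result follows quickly from \cref{lem:zero_path}.

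First I would verify the structural fact about $L$. For $L = M(r_{a_2})$, Remark \ref{rem:proj_inj} identifies $L$ with the indecomposable projective $P(r_0)$, whose support is precisely the two branches $r_0 \to r_1 \to \cdots \to r_{a_2-1}$ and $r_0 \to t_1 \to \cdots \to t_{a_1-1}$; in particular $L_{r_0} = k$ and every vertex of $Q(L)$ is reached from $r_0$ by a path in $Q(L)$. For $L = M(s_i)$, the short exact sequence in \cref{def:M(si)} exhibits $L$ as a quotient of $P(r_0)$, and inspection of the picture in that definition shows that $Q(L)$ consists of the path $r_0 \to r_1 \to \cdots \to r_{a_2-1}$ together with $r_0 \to t_1 \to \cdots \to t_{i-1}$, so again $L_{r_0} = k$ and every vertex of $Q(L)$ is reachable from $r_0$ via a path in $Q(L)$.

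Next, suppose for contradiction that $N_{r_0} = 0$. Let $f \in \operatorname{Hom}(L, N)$ be nonzero. Since $L_{r_0} = k$ and $N_{r_0} = 0$, the component $f_{r_0} : L_{r_0} \to N_{r_0}$ is automatically zero. Applying \cref{lem:zero_path} (which uses that $L$ is thin) to every path in $Q(L)$ starting at $r_0$, we conclude that $f_y = 0$ for every vertex $y \in Q(L)$. Since $f_y = 0$ trivially whenever $L_y = 0$, this forces $f = 0$, contradicting the choice of $f$. Hence $N_{r_0} \neq 0$.

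I do not anticipate any real obstacle here; the only thing to be careful about is verifying from the explicit descriptions in \cref{def:M(ri)} and \cref{def:M(si)} that $r_0$ really is a source of $Q(L)$ in the required sense, which is a direct check from the pictures.
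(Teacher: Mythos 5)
Your proof is correct and takes essentially the same route as the paper: both arguments rest on the observation that $\operatorname{top} L = S(r_0)$, so that every vertex of $Q(L)$ is reachable from $r_0$ by a (nonzero) path in $Q(L)$, and then apply \cref{lem:zero_path}. The only difference is presentational --- you argue by contradiction from $N_{r_0}=0$, while the paper states the contrapositive directly (a nonzero $f$ must have $f_{r_0}\neq 0$) --- which is cosmetic.
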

    
    \begin{proof}
        We have $\operatorname{top} L = S(r_0)$ and every vertex in $Q(L)$ can be reached by a path from $r_0$ in $Q(L)$. By \cref{lem:zero_path}, if $f \in \operatorname{Hom} (L,N)$ is a nonzero morphism, then $f_{r_0} \neq 0$.
    \end{proof}
    
    \begin{lemma}\label{lem:M(ti)_r1_support}
        Let $L = M(t_i)$ with $i = 1,2,\cdots, a_1-1$ and let $N$ be indecomposable and thin. Suppose $\operatorname{Hom} (L,N) \neq 0$. Then $N_{r_1} \neq 0$ or there exists a vertex $s_j$ with $i+1 \leq j \leq a_1-1$ such that $S(s_j)$ is a summand of the socle of $N$.
    \end{lemma}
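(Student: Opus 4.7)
The plan is to mirror the proof of \cref{lem:M(r_i)_path}, with $r_1$ playing the role of $s_1$ and the $s$-branch playing the role of the $r$-branch. Fix a nonzero $f\in\operatorname{Hom}(L,N)$ and assume $N_{r_1}=0$. First I would apply \cref{lem:zero_path} along the path $r_1\to r_2\to\cdots\to r_{a_2}$ inside $Q(L)$ to conclude $f_{r_j}=0$ for every $1\le j\le a_2$; in particular $f_{r_{a_2}}=0$. Since $f$ is nonzero and the support of $L$ consists of the $r_j$ (with $1\le j\le a_2$) together with the $s_\ell$ (with $i+1\le\ell\le a_1-1$), there must exist some index in this range with $f_{s_\ell}\neq 0$. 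Let $s_j$ denote the one with maximal index.

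Next, let $\alpha$ be the unique arrow of $Q$ with source $s_j$; its target is $s_{j+1}$ if $j<a_1-1$ and $r_{a_2}$ if $j=a_1-1$. By the maximality of $j$ together with $f_{r_{a_2}}=0$, the map $f$ vanishes at the target of $\alpha$. The standard commutative-square argument from \cref{lem:M(r_i)_path}, using that $L$ and $N$ are thin and that both $\varphi_\alpha$ and $f_{s_j}$ are nonzero maps between one-dimensional spaces, then forces $\psi_\alpha=0$. Since $\alpha$ is the only arrow of $Q$ emanating from $s_j$, this is precisely the statement that $S(s_j)$ is a direct summand of $\operatorname{soc} N$, with $i+1\le j\le a_1-1$ as required.

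The proof is a direct adaptation of \cref{lem:M(r_i)_path}, so I do not foresee any real obstacle. The only minor nuance is the boundary case $j=a_1-1$, in which the target of $\alpha$ is $r_{a_2}$ rather than another $s$-vertex; but the vanishing $f_{r_{a_2}}=0$ was already secured in the first step, so the commutative-square argument goes through uniformly for all admissible $j$.
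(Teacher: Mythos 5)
Your proposal is correct and is exactly the argument the paper intends: its proof of this lemma is a one-line reference to the proof of \cref{lem:M(r_i)_path}, with $r_1$ in the role of $s_1$ and the path $s_{i+1}\to\cdots\to s_{a_1-1}\to r_{a_2}$ in the role of $r_{i+1}\to\cdots\to r_{a_2}$, which is precisely what you carried out. Your treatment of the boundary case $j=a_1-1$ (where the target of $\alpha$ is $r_{a_2}$ and $f_{r_{a_2}}=0$ has already been established via \cref{lem:zero_path}) is also the right observation, and is consistent with the fact that this lemma, unlike its $r$-analogue, places no exclusion at the end of the branch.
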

    
    \begin{proof}
        The proof is similar to the proof in \cref{lem:M(r_i)_path} with $r_1$ in the role of $s_1$ and the path $s_{i+1} \to \cdots \to s_{a_1-1} \to r_{a_2}$ in the role of $r_{i+1} \to \cdots \to r_{a_2-1} \to r_{a_2}$.
    \end{proof}
    
    \begin{lemma}\label{lem:ext_M}
        Let $L$ be an indecomposable summand of the module $T$ defined in \cref{def:M} and let $N$ be an indecomposable summand of $\tau T$. Then
        \begin{align*}
            \operatorname{Hom} (L,N) = 0.
        \end{align*}
        In particular,
        \begin{align*}
            \operatorname{Hom}(T,\tau T) = 0 \quad\quad \text{and} \quad\quad \operatorname{Ext}^1 (T,T) = 0.
        \end{align*}
    \end{lemma}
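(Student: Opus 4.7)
The plan is to apply \cref{lem:M(r_i)_path}, \cref{lem:M(a2)M(si)_r0_support}, and \cref{lem:M(ti)_r1_support} to every pair $(L, N)$ where $L$ is an indecomposable summand of $T$ and $N$ is an indecomposable summand of $\tau T$. These three lemmas supply necessary conditions on $N$ --- nonvanishing at a specific vertex, or the presence of a specific kind of simple socle summand --- for $\operatorname{Hom}(L, N)$ to be nonzero, and the strategy is simply to verify that none of these conditions is ever met.

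The key structural input comes from \cref{lem:tau_M(ri)}, \cref{lem:tau_M(ti)}, and \cref{lem:tau_M(si)}: each nonzero summand $N$ of $\tau T$ is thin and indecomposable; moreover, every nonzero $\tau M(r_i)$ and $\tau M(t_i)$ is supported in $\{s_2,\ldots,s_{a_1-1}\} \cup \{r_2,\ldots,r_{a_2}\}$ with socle $S(r_{a_2})$, while each $\tau M(s_i)$ is supported in $\{t_1,\ldots,t_i\}$ with socle $S(t_i)$. In particular, for every indecomposable summand $N$ of $\tau T$,
\[ N_{r_0} = N_{r_1} = N_{s_1} = 0, \]
the socle of $N$ contains no summand $S(r_j)$ with $j \notin \{a_2-1, a_2\}$, and it contains no summand $S(s_j)$. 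The case analysis is then immediate: for $L = M(r_i)$ with $i \neq a_2$, \cref{lem:M(r_i)_path} would force either $N_{s_1} \neq 0$ or $S(r_j) \subset \operatorname{soc} N$ with $j \notin \{a_2-1, a_2\}$, both ruled out; for $L = M(r_{a_2})$ or $L = M(s_i)$, \cref{lem:M(a2)M(si)_r0_support} would force $N_{r_0} \neq 0$, ruled out; for $L = M(t_i)$, \cref{lem:M(ti)_r1_support} would force $N_{r_1} \neq 0$ or $S(s_j) \subset \operatorname{soc} N$ with $j > i$, again ruled out.

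Hence $\operatorname{Hom}(L, N) = 0$ in every case, and biadditivity of $\operatorname{Hom}$ yields $\operatorname{Hom}(T, \tau T) = 0$. Since $\operatorname{pd} T \leq 1$ by \cref{prop:pd_M}, the Auslander--Reiten formula gives an isomorphism $D\operatorname{Ext}^1(T, T) \cong \underline{\operatorname{Hom}}(T, \tau T)$, whose right-hand side is a quotient of $\operatorname{Hom}(T, \tau T) = 0$, so $\operatorname{Ext}^1(T, T) = 0$. The only real obstacle is routine bookkeeping: one must align the two ``exceptional'' indices $\{a_2-1, a_2\}$ appearing in \cref{lem:M(r_i)_path} with the fact that the socles of the $\tau$-summands supported on the cycle sit precisely at $r_{a_2}$, and then verify that no summand of $\tau T$ contains any socle summand $S(s_j)$ at all --- so each of the three lemmas closes off its respective escape route cleanly.
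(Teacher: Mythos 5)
Your proposal is correct and takes essentially the same route as the paper's proof: both combine \cref{lem:M(r_i)_path}, \cref{lem:M(a2)M(si)_r0_support}, and \cref{lem:M(ti)_r1_support} with the support and socle data from \cref{sec:tau_M} (namely $N_{r_0}=N_{r_1}=N_{s_1}=0$ and $\operatorname{soc}N$ containing only $S(r_{a_2})$ or $S(t_i)$) and then invoke the Auslander--Reiten formula. The only slip is notational: the formula pairing with $\operatorname{Hom}(T,\tau T)$ uses $\overline{\operatorname{Hom}}$ (morphisms modulo those factoring through injectives) rather than $\underline{\operatorname{Hom}}$, but since either is a quotient of the full $\operatorname{Hom}$ space, which you have shown vanishes, the conclusion is unaffected.
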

    
    \begin{proof}
        From our computations in \cref{sec:tau_M}, we see that each indecomposable summand $N$ of $\tau T$ is thin, and zero at the vertices $s_1$, $r_1$, and $r_0$. Using \cref{lem:M(a2)M(si)_r0_support} and $N_{r_0} = 0$, we see that $\operatorname{Hom} (L,N) =0$ for $L = M(r_{a_2})$ or $M(s_i)$. Using \cref{lem:zero_path} and $N_{s_1} = 0$, we see that $\operatorname{Hom}(L,N) = 0$ for $L = M(r_i)$, $i \neq a_2$ because none of the summands of $\tau T$ has $S(r_j)$ with $1 \leq j \leq a_2-1$ in its socle. Indeed, our computations in \cref{sec:tau_M} show that $\operatorname{soc} N$ can only contain $S(r_{a_2})$ and $S(t_i)$ as summands. Similarly, using \cref{lem:M(ti)_r1_support} and $N_{r_i} = 0$, we see that $\operatorname{Hom}(N,L)=0$ for $L = M(t_i)$ because none of the summands of $\tau T$ admit $S(s_j)$ as a summand of its socle. This shows $\operatorname{Hom}(L,N)=0$, and thus $  \operatorname{Hom}(T,\tau T) = 0. $

Because of the Auslander-Reiten formula \[\operatorname{Ext}^1(T,T) \cong D \operatorname{\overline{Hom}}(T, \tau T)\] this also implies the vanishing of $\operatorname{Ext}^1(T,T)$. 
       %
    \end{proof}
    
    \medskip
    We are now ready for the main results of this section.

    \begin{theorem}\label{thm:T-tilting}
        The module $T$ is a tilting $A$-module and a $\tau$-tilting $A$-module.
    \end{theorem}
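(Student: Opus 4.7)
The plan is to assemble the results already proved. Proposition~\ref{prop:pd_M} gives $\operatorname{pd} T \leq 1$, and Lemma~\ref{lem:ext_M} gives both $\operatorname{Ext}^1_A(T,T) = 0$ and $\operatorname{Hom}_A(T, \tau T) = 0$. The remark preceding Lemma~\ref{lem:zero_path} records that each summand $M(x)$ is thin and indecomposable. Thus what remains is to verify that the modules $M(x)$, as $x$ ranges over $Q_0$, are pairwise non-isomorphic, so that $T$ has exactly $|Q_0| = a_2 + 2a_1 - 1$ non-isomorphic indecomposable summands.

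I would verify this by comparing the supports $Q(M(x))$ directly from Definitions~\ref{def:M(ri)}--\ref{def:M(si)}. Within the $r$-family, $r_i$ is the unique cycle vertex at which $M(r_i)$ vanishes when $0<i<a_2$, while $M(r_0)$ and $M(r_{a_2})$ are singled out by vanishing on every $t_j$ and every $s_j$ respectively; hence distinct indices $i$ give non-isomorphic modules. The index $i$ for $M(s_i)$ is recovered as one more than the largest $j$ with $M(s_i)_{t_j} \neq 0$, and the analogous statement holds for $M(t_i)$ via the smallest $j$ with $M(t_i)_{s_j} \neq 0$. The three families are mutually distinguishable because $M(s_i)$ vanishes on every $s_j$ and on $r_{a_2}$, $M(t_i)$ vanishes on every $t_j$ and on $r_0$, whereas every $M(r_i)$ contains $r_{a_2}$ in its support for $i \neq a_2$ and $r_0$ for $i \neq 0$, providing cycle support beyond what the $s$- and $t$-families can reach.

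With $|T| = |Q_0|$ in hand, the tilting property follows from the standard fact that an $A$-module with projective dimension at most one, vanishing self-extensions, and $|Q_0|$ non-isomorphic indecomposable summands is tilting; Bongartz's lemma supplies the required short exact sequence $0 \to A \to T^0 \to T^1 \to 0$ with $T^0, T^1 \in \operatorname{add} T$ (see \cite{AsSiSk}). The $\tau$-tilting property then follows from $\operatorname{Hom}_A(T, \tau T) = 0$ together with $|T| = |Q_0|$ via the definition in \cite{AIR}. The only place requiring care is the bookkeeping of the degenerate cases ($i \in \{0, a_2\}$ for the $r$-family, and small values of $a_1$ or $a_2$), but these are routine finite checks; all the substantive technical work has been done in the preceding sections.
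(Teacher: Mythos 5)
Your proposal is correct and follows essentially the same route as the paper: projective dimension $1$ from Proposition~\ref{prop:pd_M}, rigidity and $\operatorname{Hom}(T,\tau T)=0$ from Lemma~\ref{lem:ext_M}, and the count $|T|=|Q_0|$, which the paper simply asserts while you verify it explicitly via the pairwise distinct supports (note only that your cross-family criterion is slightly imprecise for $M(r_{a_2})$ versus the $s$-family --- both vanish at $r_{a_2}$ and all $s_j$ --- but the supports still differ at $t_{a_1-1}$, as your acknowledged degenerate-case check would catch).
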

    
    \begin{proof}
        The projective dimension of $T$ is $1$ by \cref{prop:pd_M} and $T$ is rigid by \cref{lem:ext_M}. In particular, we have shown that $\operatorname{Hom}(T, \tau T) = 0$. The result now follows since the number of non-isomorphic indecomposable summands of $T$ is equal to the number of vertices in $Q$.
    \end{proof}

\begin{theorem}\label{thm:T-cto}
    The module $T$ induces a cluster-tilting object in the cluster category.
\end{theorem}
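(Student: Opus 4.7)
The plan is to apply the Adachi--Iyama--Reiten correspondence~\cite{AIR} between basic $\tau$-tilting modules over $A$ and basic cluster-tilting objects of the cluster category $\mathcal{C}_Q$. First I would check that Amiot's generalized cluster category is well defined in our setting. Because the potential $W$ is a single oriented cycle of length $a_2+1$, the Jacobian ideal contains every path of length $a_2$ around the cycle, so the Jacobian algebra $A$ is finite-dimensional; hence $(Q,W)$ is Jacobi-finite. Amiot's construction then produces a $\textup{Hom}$-finite, $2$-Calabi--Yau triangulated category $\mathcal{C}_Q$ in which $A$ itself is realized as the endomorphism algebra of a canonical cluster-tilting object $\Gamma_A$, and there is a canonical functor $\textup{mod}\,A\hookrightarrow \mathcal{C}_Q$.

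With this framework in place, the proof is a direct appeal to~\cite{AIR}. By \cref{thm:T-tilting}, $T$ is a $\tau$-tilting $A$-module with $|Q_0|$ pairwise non-isomorphic indecomposable summands. The AIR bijection then identifies $T$, viewed as an object of $\mathcal{C}_Q$ via the canonical embedding, with a basic cluster-tilting object of $\mathcal{C}_Q$ having $|Q_0|$ indecomposable summands, which is the content of the theorem.

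The main obstacle is not the argument itself but the verification of the background prerequisites required to apply the AIR bijection in the presence of the oriented cycle in $Q$: namely, Jacobi-finiteness of $(Q,W)$ and the identification of $A$ with the endomorphism ring of the canonical cluster-tilting object of $\mathcal{C}_Q$. Both are by now standard for Jacobian algebras whose potential is a single oriented cycle, so no new computation is needed beyond what has already been established.
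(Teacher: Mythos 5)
Your proposal is correct and takes essentially the same route as the paper: both deduce the result from \cref{thm:T-tilting} together with the Adachi--Iyama--Reiten correspondence \cite[Theorem 4.1]{AIR}, the paper phrasing the full-support condition as $T$ sharing no indecomposable summands with $A[1]$, which your observation that $T$ is a genuine $\tau$-tilting module with $|Q_0|$ summands covers. Your additional verification of Jacobi-finiteness and of Amiot's $2$-Calabi--Yau setup is background the paper leaves implicit, not a different argument.
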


\begin{proof}
    Since $T$ does not share any indecomposable summands with $A[1] = \bigoplus_{x \in Q_0} P(x)[1]$, we see that \cref{thm:T-tilting} and \cite[Theorem 4.1]{AIR}  complete the proof.
\end{proof}

\section{The Endomorphism algebra \texorpdfstring{$\operatorname{End} T$}{End T}.}\label{sec:end_M}

In this section, we study the endomorphism algebra of our $A$-module $T$ by computing the $\operatorname{Hom}$ spaces between its indecomposable summands.

\begin{lemma}\label{lem 6.1}
    Given two vertices  $r_i, r_j $ on the oriented cycle in $Q$, we have
    \begin{align*}
        \operatorname{Hom}(M(r_i),M(r_j)) = 
        \begin{cases}
            0 & \text{if $j = i+1$;} \\
            k & \text{if $j \neq i+1$.}
        \end{cases}
    \end{align*}
\end{lemma}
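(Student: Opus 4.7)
The plan is to compute $\operatorname{Hom}(M(r_i), M(r_j))$ by direct diagram chasing. Both modules are thin, so any $f\colon M(r_i) \to M(r_j)$ is determined by a choice of scalar $f_x \in k$ at each vertex $x$ in the common support $S := \operatorname{supp} M(r_i) \cap \operatorname{supp} M(r_j)$, with $f_x = 0$ elsewhere. For an arrow $\alpha\colon x \to y$ with both endpoints in $S$, the commutative square forces $f_x = f_y$, so (as in \cref{lem:zero_path}) $f$ is constant on each connected component of the subquiver induced by $S$.

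The case $j = i$ is immediate from the indecomposability of $M(r_i)$, giving $\operatorname{End} M(r_i) = k$. For $j \neq i$, two boundary arrows supply the essential constraints. The arrow $r_{i-1} \to r_i$ has target $r_i \notin \operatorname{supp} M(r_i)$ but both endpoints in $\operatorname{supp} M(r_j)$ (when $r_{i-1} \in S$), so commutativity forces $f_{r_{i-1}} = 0$; dually, the arrow $r_j \to r_{j+1}$ (when $r_{j+1} \in S$) forces $f_{r_{j+1}} = 0$. Here indices are taken modulo $a_2 + 1$, so that $r_{i-1}$ and $r_{j+1}$ refer to the cyclic neighbours of $r_i$ and $r_j$ on the oriented cycle $W$.

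Removing $r_i$ and $r_j$ from the oriented $(a_2+1)$-cycle produces a single arc $r_{i+2}, \ldots, r_{i-1}$ when $j \equiv i+1$, and two arcs $\{r_{i+1},\ldots,r_{j-1}\}$, $\{r_{j+1},\ldots,r_{i-1}\}$ otherwise. When $j \equiv i+1$, both $r_{j+1} = r_{i+2}$ and $r_{i-1}$ lie on the unique remaining arc and are forced to vanish, so $f$ is zero on this arc, and then constancy along the attached $s$- and $t$-branches (propagated through the arrows $s_{a_1-1} \to r_{a_2}$ and $r_0 \to t_1$ when these vertices are in $S$) kills $f$ everywhere, giving $\operatorname{Hom} = 0$. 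When $j \not\equiv i, i+1$, the vanishing constraints kill only the arc $\{r_{j+1},\ldots,r_{i-1}\}$ together with the branches attached to it at $r_0$ and $r_{a_2}$, while $f$ is a free constant on the complementary arc $\{r_{i+1},\ldots,r_{j-1}\}$, giving $\operatorname{Hom} \cong k$.

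The main technical obstacle is bookkeeping in the boundary cases $i \in \{0, a_2\}$ or $j \in \{0, a_2\}$, where the support of $M(r_i)$ or $M(r_j)$ omits one of the branches entirely and $\operatorname{supp} M(r_i) \setminus \operatorname{supp} M(r_j)$ may contain extra vertices $s_k$ or $t_k$ besides $r_j$. In each such case a short check shows that the additional arrows emanating from or landing on these extra vertices either fall outside $S$ at the other endpoint or impose exactly the same vanishing constraints as before, so the count of free arcs is unchanged and the dichotomy persists.
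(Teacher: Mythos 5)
Your proposal is correct and takes essentially the same approach as the paper: a direct scalar diagram chase exploiting thinness, in which the two boundary squares at the arrows $r_{i-1}\to r_i$ and $r_j\to r_{j+1}$ supply exactly the vanishing constraints that separate the case $j\equiv i+1$ (where the single surviving arc, with its attached branches, is killed) from $j\not\equiv i+1$ (where one arc carries a free scalar). The only difference is organizational: the paper anchors the computation at $\lambda=f_{r_{i+1}}$ and propagates along paths from the top of $M(r_i)$, treating the $s$-branch via the path $s_\ell\to\cdots\to r_{a_2}$, whereas you propagate constancy over connected components of the common support; your explicit cyclic reading of $j=i+1$ modulo $a_2+1$ agrees with the paper's implicit convention.
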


\begin{proof}
    Denote $M(r_i)$ by $(M(r_i)_x, \varphi_\alpha)$ and $M(r_j)$ by $(M(r_j)_x, \varphi'_\alpha)$. Furthermore, let $f \in \operatorname{Hom}(M(r_i),M(r_j))$ be any morphism. Then $f=(f_x)_{x\in Q_0}$ and each $f_x$ is given by the multiplication with a scalar. Let $\lambda\in k$ be the scalar corresponding to $ f_{r_{i+1}}$. Note that $\lambda$ may be zero. We complete the proof by showing that the choice of $\lambda$ completely determines the morphism $f$, that is, we prove that $f_x = 0$ or $f_x = \lambda$, for every vertex $x$. Also, we show that if $j \neq i+1$, then $\lambda$ can be nonzero. 
    
    We only need to consider vertices $x$ such that $M(r_i)_x$ and $M(r_j)_x$ are both nonzero; let $x \in Q_0$ be any vertex such that $M(r_i)_x = M(r_j)_x = k$. We consider two cases. 
    
    First, suppose there exists a nonzero path $w$ from $r_{i+1}$ to $x$. Let $\varphi_w$ be the composition of maps from $r_{i+1}$ to $x$ in our module $M(r_i)$ along the path $w$, and let $\varphi'_w$ be the composition of maps from $r_{i+1}$ to $x$ in our module $M(r_j)$ along the path $w$. Note that by our assumption that $M(r_i)_x \neq 0$, we have $\varphi_w = 1$. On the other hand, $\varphi'_w = 0$ or $\varphi'_w = 1$. Since $f$ is a morphism, we have a commutative diagram:
        \[ \begin{tikzcd} M(r_i)_{r_i+1} \arrow{r}{\varphi_w} \arrow[swap]{d}{\lambda} & M(r_i)_x \arrow{d}{f_x} \\ M(r_j)_{r_i+1} \arrow{r}{\varphi'_w}& M(r_j)_x \end{tikzcd} \]
    Hence $f_x = \lambda \varphi'_w$, so $f_x = 0$ or $f_x = \lambda$.
    
    Second, suppose there does not exist a nonzero path from $r_{i+1}$ to $x$. Then $x=s_\ell$ for some $\ell$, or $x=r_i$, or $i=0$ and $x=r_0,t_1,\ldots,t_{a_1-1}.$ 
    If $x=r_i$ then $f_x=0$, since $M(r_i)_{x}=0$. Similarly, if $i=0$ and $x$ is one of $r_0,t_1,\ldots,t_{a_1-1}$ then $f_x=0$, because $M(r_0)_x$ is zero.  It remains the case where
      $x=s_\ell$. let $w$ be the nonzero path from $s_\ell$ to $r_{a_2}$. Let $\varphi_w$ be the composition of maps from $s_\ell$ to $r_{a_2}$ in $M(r_i)$ along the path $w$ and let $\varphi'_w$ be the composition of maps from $s_\ell$ to $r_{a_2}$ in $M(r_j)$ along the path $w$. In this case, $\varphi_w = 1$ and $\varphi'_w = 1$. Since $f$ is a morphism, we get a commutative diagram:
        \[ \begin{tikzcd} M(r_i)_{s_\ell} \arrow{r}{\varphi_w} \arrow[swap]{d}{f_{s_\ell}} & M(r_i)_{r_{a_2}} \arrow{d}{f_{r_{a_2}}} \\ M(r_j)_{s_\ell} \arrow{r}{\varphi'_w}& M(r_j)_{r_{a_2}} \end{tikzcd} \]
    Hence $f_x = f_{r_{a_2}}$, and since we have already shown that $f_{r_{a_2}} = 0$ or $f_{r_{a_2}} = \lambda$, we conclude that $\lambda$ completely determines $f$.
    
    To complete the proof, suppose first $j = i+1$. Then $\lambda = f_{r_{i+1}} = 0$, since $M(r_j)_{i+1} = 0$, and thus $\operatorname{Hom} (M(r_i), M(r_{j})) = 0$. Otherwise, $j \neq i+1$ and $\lambda\in k$ can be chosen arbitrarily. Thus $\operatorname{Hom}(M(r_i), M(r_{j}))= k$.
\end{proof}

\begin{lemma}\label{lem:hom(M(ti) M(tj))}
    Given two vertices labeled $t_i, t_j \in Q_0$, we have
    \begin{align*}
        \operatorname{Hom}(M(t_i),M(t_j)) =
        \begin{cases}
            0 & \text{if $i < j$;} \\
            k & \text{if $i \geq j$.}
        \end{cases}
    \end{align*}
\end{lemma}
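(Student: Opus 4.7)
The plan is to split into the cases $i < j$ and $i \geq j$. In both cases I rely on the explicit description of the thin indecomposable modules from \cref{def:M(ti)}: the support of $M(t_i)$ is $\{r_1,\ldots,r_{a_2}\}\cup\{s_{i+1},\ldots,s_{a_1-1}\}$, and every arrow between two vertices of this support carries the identity map $k\to k$ (similarly for $M(t_j)$).

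First I treat $i<j$. Since $i+1\leq j$, the vertex $s_{i+1}$ belongs to $\operatorname{supp} M(t_i)$ but not to $\operatorname{supp} M(t_j)$. Hence any morphism $f\in\operatorname{Hom}(M(t_i),M(t_j))$ satisfies $f_{s_{i+1}}=0$. Applying \cref{lem:zero_path} to the nonzero path $s_{i+1}\to s_{i+2}\to\cdots\to s_{a_1-1}\to r_{a_2}$ in $Q(M(t_i))$ yields $f_y=0$ for every vertex $y$ on this path, and in particular $f_{r_{a_2}}=0$. To pick up the remaining cycle vertices I propagate backwards: for each arrow $r_\ell\to r_{\ell+1}$ with $\ell=1,\ldots,a_2-1$ both horizontal maps of the morphism square are the identity on $k$, so commutativity forces $f_{r_\ell}=f_{r_{\ell+1}}$, and downward induction from $f_{r_{a_2}}=0$ yields $f_{r_\ell}=0$ for every such $\ell$. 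Thus $f=0$.

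For $i\geq j$ the support of $M(t_i)$ is contained in that of $M(t_j)$. I exhibit a candidate nonzero morphism $f$ by setting $f_x=1$ for $x\in\operatorname{supp} M(t_i)$ and $f_x=0$ otherwise, and verify the morphism condition on each arrow. When both endpoints lie in $\operatorname{supp} M(t_i)$ both vertical scalars are $1$ and both horizontal maps are the identity, so the square commutes. When at least one endpoint is outside $\operatorname{supp} M(t_i)$, the corresponding map in $M(t_i)$ is zero and the appropriate component of $f$ is zero, so both sides of the square vanish. For the dimension count, any $f$ is determined by scalars at vertices of $\operatorname{supp} M(t_i)$ (it is forced to vanish elsewhere because $M(t_i)$ does), and connectedness of $Q(M(t_i))$ together with the identity maps in both modules forces all these scalars to be equal, giving $\operatorname{Hom}(M(t_i),M(t_j))=k$.

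The argument is essentially a bookkeeping exercise with no serious obstacle. The mildest subtlety lies in the case $i<j$: \cref{lem:zero_path} only reaches vertices accessible from $s_{i+1}$ via paths inside $Q(M(t_i))$, so it covers the $s$-branch and $r_{a_2}$ but not the earlier cycle vertices $r_1,\ldots,r_{a_2-1}$, which is why I must supplement it with direct commutativity arguments along the cycle arrows feeding into $r_{a_2}$.
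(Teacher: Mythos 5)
Your argument is correct and in substance identical to the paper's: the nonzero morphism you construct for $i \ge j$ is precisely the inclusion of $M(t_i)$ as a submodule of $M(t_j)$, which is all the paper's one-line proof invokes, while your $i<j$ case (propagating $f_{s_{i+1}}=0$ forward via \cref{lem:zero_path} and backward along the identity maps on the cycle) spells out the vanishing the paper leaves to the reader. One step is phrased slightly too quickly: for $i \ge j$ and an arrow $x \to y$ with $x \in \operatorname{supp} M(t_i)$ but $y \notin \operatorname{supp} M(t_i)$, the composite $\varphi'_\alpha \circ f_x = \varphi'_\alpha$ does not vanish because a component of $f$ is zero, but because the unique such arrow is $r_{a_2} \to r_0$ and $M(t_j)_{r_0} = 0$ --- this one-line check is exactly the verification that the subspace is closed under the $A$-action, i.e., that $M(t_i)$ really is a submodule of $M(t_j)$ as the paper asserts.
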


\begin{proof}
    This follows from the observation that if $i \geq j$, then $M(t_i)$ is a submodule of $M(t_j)$, so $\operatorname{Hom}(M(t_i),M(t_j))$ is generated by the inclusion map.
\end{proof}

\begin{lemma}\label{lem 6.3}
    Given two vertices labeled $s_i, s_j \in Q_0$, we have
    \begin{align*}
        \operatorname{Hom}(M(s_i),M(s_j)) =
        \begin{cases}
            0 & \text{if $i < j$;} \\
            k & \text{if $i \geq j$.}
        \end{cases}
    \end{align*}
\end{lemma}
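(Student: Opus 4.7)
The plan is to mirror the strategy of \cref{lem 6.1}: use thinness of $M(s_i)$ and $M(s_j)$ to reduce an arbitrary $f\in\operatorname{Hom}(M(s_i),M(s_j))$ to the single scalar $\lambda=f_{r_0}$, and then decide in each case whether $\lambda$ can be nonzero. Both modules are thin with top $S(r_0)$. Writing $M(s_i)=(M(s_i)_x,\varphi_\alpha)$ and $M(s_j)=(M(s_j)_x,\varphi'_\alpha)$ and setting $\lambda=f_{r_0}$, the argument runs as in \cref{lem 6.1}: for every vertex $x$ lying in both supports, the minimal path $w$ from $r_0$ to $x$ in $Q$ satisfies $\varphi_w=1$ by thinness, and the commutative square along $w$ yields $f_x=\varphi'_w\lambda\in\{0,\lambda\}$; when $M(s_i)_x=0$ or $M(s_j)_x=0$, we have $f_x=0$ automatically. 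Hence $f$ is determined by $\lambda$ and $\dim_k\operatorname{Hom}(M(s_i),M(s_j))\le 1$.

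For the case $i\ge j$, the plan is to exhibit an explicit nonzero morphism. Recall from \cref{def:M(si)} that $M(s_\ell)=P(r_0)/\operatorname{im}(g_\ell)$, where $g_\ell\colon P(t_\ell)\to P(r_0)$ is given by the path $r_0\to t_1\to\cdots\to t_\ell$, so $\operatorname{im}(g_\ell)$ is supported on $\{t_\ell,\ldots,t_{a_1-1}\}$. The inequality $i\ge j$ gives $\operatorname{im}(g_i)\subseteq\operatorname{im}(g_j)$, so the canonical map produces a nonzero surjection $M(s_i)\twoheadrightarrow M(s_j)$. Combined with the dimension bound above, this shows $\operatorname{Hom}(M(s_i),M(s_j))=k$.

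For the case $i<j$, the plan is to rule out nonzero morphisms by a path-obstruction argument. Take $w\colon r_0\to t_1\to\cdots\to t_i$. In $M(s_i)$, we have $M(s_i)_{t_i}=0$, so the composition $\varphi_w$ along $w$ is zero. In $M(s_j)$, since $i<j$, every vertex on $w$ lies in the support, so $\varphi'_w=1$. Commutativity of the square
\[ \begin{tikzcd} M(s_i)_{r_0} \arrow{r}{\varphi_w=0} \arrow[swap]{d}{\lambda} & M(s_i)_{t_i} \arrow{d}{f_{t_i}} \\ M(s_j)_{r_0} \arrow{r}{\varphi'_w=1} & M(s_j)_{t_i} \end{tikzcd} \]
then forces $\lambda=0$, and by the first paragraph $f=0$.

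I do not anticipate any significant obstacle, since the argument parallels \cref{lem 6.1} with the outgoing branch $r_0\to t_1\to\cdots\to t_{a_1-1}$ replacing the cyclic portion there; moreover, \cref{lem:hom(M(ti) M(tj))} offers a complementary template in which the nonzero morphism is realized by a submodule inclusion rather than a quotient. The only delicate verification is that the path $w$ really produces a nonzero composition in $M(s_j)$ when $i<j$, which is immediate from the description of the support.
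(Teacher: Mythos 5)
Your proof is correct, but it takes a genuinely different route from the paper's. The paper disposes of this lemma in one line by duality: since $Q\cong Q^{\operatorname{op}}$ and hence $A\cong D\!A$, the statement is the dual of \cref{lem:hom(M(ti) M(tj))}, which in turn is proved simply by observing that for $i\ge j$ the module $M(t_i)$ is a submodule of $M(t_j)$ and the hom space is generated by the inclusion. Your argument is the direct, unpacked form of exactly that: the surjection $M(s_i)=P(r_0)/\operatorname{im}(g_i)\to P(r_0)/\operatorname{im}(g_j)=M(s_j)$ you build from \cref{def:M(si)} (valid because $\operatorname{im}(g_i)\subseteq\operatorname{im}(g_j)$ when $i\ge j$, the generating path $r_0\to\cdots\to t_i$ factoring through $r_0\to\cdots\to t_j$) is precisely the dual of the paper's submodule inclusion. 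What your version buys is self-containedness and an explicit one-dimensionality bound that the paper leaves implicit in the phrase ``generated by the inclusion'': both modules are thin with top $S(r_0)$, every supported vertex is reached from $r_0$ by a path that is nonzero in $A$ (the cycle subpaths involved have length at most $a_2-1<a_2$, so they avoid the relations), and hence by the argument of \cref{lem:zero_path} any morphism is determined by the scalar $f_{r_0}$. Your treatment of the case $i<j$ is also sound: along $w\colon r_0\to t_1\to\cdots\to t_i$ the composition vanishes in $M(s_i)$ (as $M(s_i)_{t_i}=0$) while it is the identity in $M(s_j)$ (as $i\le j-1$ puts all of $w$ in the support), so commutativity forces $f_{r_0}=0$ and therefore $f=0$. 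What the paper's route buys instead is brevity, exploiting the $Q\cong Q^{\operatorname{op}}$ symmetry once rather than redoing the computation on the other branch.
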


\begin{proof}
    Since $Q$ is isomorphic to $Q^{op}$, we see that this is the dual argument to \cref{lem:hom(M(ti) M(tj))}.
\end{proof}

\begin{lemma}\label{lem:hom(-M(si))}
    Let $x \in Q_0$ be any vertex labeled $r_j$ or $t_j$. Given any vertex labeled $s_i \in Q_0$, we have
    \begin{align*}
        \operatorname{Hom}(M(x), M(s_i)) = \operatorname{Hom}(M(x), M(r_{a_2})).
    \end{align*}
\end{lemma}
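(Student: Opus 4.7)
The plan is to apply the left-exact contravariant functor $\operatorname{Hom}(M(x),-)$ to the defining short exact sequence for $M(s_i)$ from Definition~\ref{def:M(si)},
\[
0 \to P(t_i) \to P(r_0) \to M(s_i) \to 0,
\]
and then invoke the identification $P(r_0) = M(r_{a_2})$ from Remark~\ref{rem:proj_inj}. The resulting long exact sequence
\[
0 \to \operatorname{Hom}(M(x),P(t_i)) \to \operatorname{Hom}(M(x),M(r_{a_2})) \to \operatorname{Hom}(M(x),M(s_i)) \to \operatorname{Ext}^1(M(x),P(t_i)) \to \cdots
\]
reduces the claim to the two vanishings $\operatorname{Hom}(M(x),P(t_i)) = 0$ and $\operatorname{Ext}^1(M(x),P(t_i)) = 0$.

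For the Hom vanishing, I would argue by considering a hypothetical nonzero $f\colon M(x)\to P(t_i)$. Its image $f(M(x))$ is a nonzero submodule of $P(t_i)$, so $\operatorname{top} f(M(x))$ is a nonzero semisimple module supported inside the support $\{t_i,t_{i+1},\ldots,t_{a_1-1}\}$ of $P(t_i)$. At the same time, as a quotient of $M(x)$, the module $f(M(x))$ has $\operatorname{top} f(M(x))$ a quotient of $\operatorname{top} M(x)$. Direct inspection of Definitions~\ref{def:M(ri)} and \ref{def:M(ti)} shows that the top of $M(x)$ is a sum of simples at $r$- and $s$-vertices only, concretely $S(r_{j+1})\oplus S(s_1)$, $S(r_0)$, or $S(r_1)\oplus S(s_{j+1})$, depending on whether $x$ is $r_j$ with $j<a_2$, $r_{a_2}$, or $t_j$. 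These two support conditions are incompatible, so $f=0$.

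For the Ext vanishing, I plan to apply $\operatorname{Hom}(-,P(t_i))$ to the length-one projective resolutions established in the proofs of Lemmas~\ref{lem:tau_M(ri)} and \ref{lem:tau_M(ti)},
\[
0 \to P(r_{a_2}) \to P(r_{j+1}) \oplus P(s_1) \to M(r_j) \to 0 \quad \text{and} \quad 0 \to P(r_{a_2}) \to P(r_1) \oplus P(s_{j+1}) \to M(t_j) \to 0,
\]
using the standard identification $\operatorname{Hom}(P(y),P(t_i)) \cong P(t_i)_y$. Every relevant $y$ belongs to $\{r_{a_2}, r_{j+1}, s_1, r_1, s_{j+1}\}$ and so is an $r$- or $s$-vertex, lying outside the support of $P(t_i)$; hence every Hom term vanishes and $\operatorname{Ext}^1(M(x),P(t_i))$ is the cokernel of the zero map $0\to 0$. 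The three boundary cases $M(r_{a_2-1})$, $M(r_{a_2})$, and $M(t_{a_1-1})$ are themselves projective by Remark~\ref{rem:proj_inj}, so Ext vanishes automatically. The only mildly delicate point will be to check that the top-based argument applies uniformly even to those $M(r_j)$ with $1\leq j\leq a_2-1$ that do carry $t$-support in their own dimension vectors; the tops of these modules nevertheless consist of $S(r_{j+1})$ and $S(s_1)$, so no $t$-simple appears and the argument goes through without modification.
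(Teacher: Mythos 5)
Your proposal is correct, and its skeleton is the same as the paper's: both apply $\operatorname{Hom}(M(x),-)$ to the defining sequence $0 \to P(t_i) \to M(r_{a_2}) \to M(s_i) \to 0$ and reduce to the vanishing of $\operatorname{Hom}(M(x),P(t_i))$ and $\operatorname{Ext}^1(M(x),P(t_i))$. The two vanishing steps, however, are argued differently. For the Hom vanishing, the paper picks a top vertex $y$ of $M(x)$ with a nonzero path to $t_i$ and propagates $f_y=0$ along paths via \cref{lem:zero_path}; your image-and-top argument (the top of $f(M(x))$ must simultaneously be a quotient of $\operatorname{top} M(x)$, supported at $r$- and $s$-vertices, and supported in $\{t_i,\ldots,t_{a_1-1}\}$) is equivalent but avoids the path bookkeeping, and you correctly flag and dispose of the delicate case of those $M(r_j)$ whose dimension vectors carry $t$-support but whose tops do not. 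For the Ext vanishing, the paper invokes the Auslander--Reiten formula together with \cref{lem:ext_M}; note that \cref{lem:ext_M}, as stated, concerns summands $L$ of $T$, while $P(t_i)$ is not among them (cf.\ \cref{rem:proj_inj}), so what the paper really uses is that $\tau M(x)$ has no support at $t$-vertices (\cref{lem:tau_M(ri)}, \cref{lem:tau_M(ti)}). Your direct computation from the explicit length-one projective resolutions, using $\operatorname{Hom}(P(y),P(t_i)) \cong P(t_i)_y = 0$ for the $r$- and $s$-vertices $y$ appearing there, is more elementary, entirely self-contained, and sidesteps that loose citation; your handling of the projective boundary cases $M(r_{a_2-1})$, $M(r_{a_2})$, $M(t_{a_1-1})$ is also needed and correct. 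One cosmetic slip: $\operatorname{Hom}(M(x),-)$ is the \emph{covariant} Hom functor, not contravariant; the long exact sequence you write is nonetheless the right one.
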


\begin{proof}
    From \cref{def:M(si)}, we have a short exact sequence
    \begin{align*}
        0 \to P(t_i) \to M(r_{a_2}) \to M(s_i) \to 0.
    \end{align*}
    We apply the functor $\operatorname{Hom}(M(x), -)$ to the sequence to get the exact sequence
    \begin{align*}
        0 &\to \operatorname{Hom}(M(x), P(t_i)) \to \operatorname{Hom}(M(x), M(r_{a_2})) \to \operatorname{Hom}(M(x), M(s_i))\\ &\to \operatorname{Ext}^1(M(x), P(t_i)).
    \end{align*}
    We complete the proof by showing that $\operatorname{Hom}(M(x), P(t_i))=0$ and $\operatorname{Ext}^1(M(x), P(t_i)) = 0$.
    
    First, let $f \in \operatorname{Hom}(M(x), P(t_i))$. By \cref{def:M(ri)} and \cref{def:M(ti)}, we have
    \begin{align*}
        \operatorname{top} M(x) = \begin{cases}
            S(s_1) \oplus S(r_{j+1}) & \text{if $x=r_j$;} \\
            S(r_1) \oplus S(s_{j+1}) & \text{if $x=t_j$.}
        \end{cases}
    \end{align*}
    In both cases, there is a nonzero path from a vertex $y$ in the top of $M(x)$ to $t_i$, and $P(t_i)$ is not supported on $y$. \cref{lem:zero_path} implies that $f_{t_i} = 0$, hence $f = 0$.
    
    Second, we see that $\operatorname{Hom} (P(t_i), \tau M(x)) = 0$ by \cref{lem:ext_M}. Hence by the AR formula, $\operatorname{Ext}^1(M(x), P(t_i)) = D \overline{\operatorname{Hom}} (P(t_i), \tau M(x)) = 0$.
\end{proof}

\begin{lemma}\label{lem 6.5}
    Let $x \in Q_0$ be any vertex labeled $r_j$ or $s_j$. Given any vertex labeled $t_i \in Q_0$, we have
    \begin{align*}
        \operatorname{Hom}(M(t_i), M(x)) \cong \operatorname{Hom}(M(t_i), M(r_{0})).
    \end{align*}
\end{lemma}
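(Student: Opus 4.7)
My plan is to parallel the proof of \cref{lem:hom(-M(si))} by applying $\operatorname{Hom}(-, M(x))$ to the short exact sequence
\[
0 \to M(t_i) \to M(r_0) \to I(s_i) \to 0
\]
from \cref{def:M(ti)}. The resulting long exact sequence begins
\[
0 \to \operatorname{Hom}(I(s_i), M(x)) \to \operatorname{Hom}(M(r_0), M(x)) \to \operatorname{Hom}(M(t_i), M(x)) \to \operatorname{Ext}^1(I(s_i), M(x)) \to \cdots,
\]
so once the two flanking terms vanish the middle map becomes an isomorphism. The right-hand side of the lemma is then pinned down using the identification $M(r_0) = I(r_{a_2})$ from \cref{rem:proj_inj}: the standard injective-Hom formula gives $\operatorname{Hom}(M(t_i), M(r_0)) \cong D(M(t_i)_{r_{a_2}}) \cong k$ since $M(t_i)_{r_{a_2}} = k$, and a parallel computation identifies $\operatorname{Hom}(M(r_0), M(x))$ with the same one-dimensional space, closing the identification.

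To show $\operatorname{Hom}(I(s_i), M(x)) = 0$: the module $I(s_i)$ is thin and supported on $\{s_1, \ldots, s_i\}$ with top $S(s_1)$. If $x$ is labeled $s_j$ or $x = r_{a_2}$, then $M(x)$ has no support on the $s$-branch and the vanishing is immediate by disjoint supports. Otherwise $x$ is labeled $r_j$ with $j \neq a_2$, so $M(x)$ is supported at $s_{i+1}$; the arrow $s_i \to s_{i+1}$ is the zero map in $I(s_i)$ (its target is not in the support) but the identity in $M(x)$, and the commuting-square condition forces the value at $s_i$ of any morphism $f \colon I(s_i) \to M(x)$ to vanish. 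Applying \cref{lem:zero_path} to the path $s_1 \to \cdots \to s_i$ then yields $f_{s_1} = 0$, hence $f = 0$.

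The main obstacle is the Ext vanishing $\operatorname{Ext}^1(I(s_i), M(x)) = 0$, since $I(s_i)$ is neither projective nor a summand of $T$. My primary approach would be to exploit the self-duality $A \cong D\!A$ induced by the isomorphism $Q \cong Q^{\operatorname{op}}$ (swapping $s_j \leftrightarrow t_j$, $r_k \leftrightarrow r_{a_2 - k}$): the present SES is then dual to the SES $0 \to P(t_i) \to M(r_{a_2}) \to M(s_i) \to 0$ used in \cref{lem:hom(-M(si))}, and the Ext vanishing transports across from the already-established $\operatorname{Ext}^1(M(x), P(t_i)) = 0$ in that proof. Failing that, one would compute $\tau I(s_i)$ directly via the minimal projective presentation $P(s_{i+1}) \to P(s_1) \to I(s_i) \to 0$ (whose kernel has top $S(s_{i+1})$), apply the Nakayama functor to obtain $\tau I(s_i) \cong \ker(I(s_{i+1}) \to I(s_1))$, and use the Auslander-Reiten formula $\operatorname{Ext}^1(I(s_i), M(x)) \cong D\operatorname{\overline{Hom}}(M(x), \tau I(s_i))$ together with a support analysis analogous to those in \cref{sec:tilting_module} to conclude.
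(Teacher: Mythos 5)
Your main line --- applying $\operatorname{Hom}(-,M(x))$ to the sequence $0 \to M(t_i) \to M(r_0) \to I(s_i) \to 0$ and killing the two flanking terms $\operatorname{Hom}(I(s_i),M(x))$ and $\operatorname{Ext}^1(I(s_i),M(x))$ --- is precisely what the paper intends: its entire proof is the one-liner that the argument is dual to \cref{lem:hom(-M(si))}, and what you wrote out is that dual argument, including the correct transport of the Ext-vanishing through $A \cong A^{\operatorname{op}}$ (under which $P(t_i) \leftrightarrow I(s_i)$, $M(s_i) \leftrightarrow M(t_i)$, $M(r_j) \leftrightarrow M(r_{a_2-j})$); your fallback computation of $\tau I(s_i)$ plus the AR formula also goes through. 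Two small repairs in your Hom-vanishing step: when $i = a_1-1$ the vertex ``$s_{i+1}$'' must be read as $r_{a_2}$; and \cref{lem:zero_path} propagates vanishing \emph{forward} along paths, so it does not yield $f_{s_1}=0$ from $f_{s_i}=0$ --- instead use the square over the path $w\colon s_\ell \to \cdots \to s_i$, where $\varphi_w = 1$ in $I(s_i)$ and $\varphi'_w = 1$ in $M(r_j)$ for $j \neq a_2$, to get $f_{s_\ell} = 0$ directly.

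The genuine gap is your closing step. The long exact sequence proves $\operatorname{Hom}(M(t_i),M(x)) \cong \operatorname{Hom}(M(r_0),M(x))$, with $M(r_0)$ as \emph{source}, and your further identification of this with $\operatorname{Hom}(M(t_i),M(r_0))$ fails: by Lemma~\ref{lem 6.1} we have $\operatorname{Hom}(M(r_0),M(r_1)) = 0$ (the case $j = i+1$), so at $x = r_1$ the space you call ``the same one-dimensional space'' is zero, while $\operatorname{Hom}(M(t_i),M(r_0)) \cong D\bigl(M(t_i)_{r_{a_2}}\bigr) = k$ as you computed. Indeed $\operatorname{Hom}(M(t_i),M(r_1)) = 0$ for every $i$: any morphism vanishes at $r_1$ since $M(r_1)_{r_1}=0$, and then everywhere, because the paths $r_1 \to \cdots \to r_{a_2}$ and $s_\ell \to \cdots \to r_{a_2}$ act invertibly on the supports involved. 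In the example of Section~\ref{sec:example} this is visible concretely: $\operatorname{Hom}(M(t_1),M(r_1)) = \operatorname{Hom}(P(r_1), I(t_1)) \cong I(t_1)_{r_1} = 0$, yet $\operatorname{Hom}(M(t_1),M(r_0)) \cong I(r_{a_2})_{r_1} = k$. So the lemma as printed cannot hold at $x = r_1$; the correct dual of \cref{lem:hom(-M(si))} --- which is what your exact sequence proves, and which is consistent with the quiver and relations obtained in \cref{thm:end_T} --- reads $\operatorname{Hom}(M(t_i),M(x)) \cong \operatorname{Hom}(M(r_0),M(x))$. The fix is simply to delete your final ``pinning down'' paragraph and stop at the isomorphism the sequence gives.
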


\begin{proof}
    Since $Q$ is isomorphic to $Q^{op}$, the argument is dual to \cref{lem:hom(-M(si))}.
\end{proof}

\begin{lemma}\label{lem:Hom(M(ri) M(tj))}
    Given two vertices labeled $r_i, t_j \in Q_0$, we have
    \begin{align*}
        \operatorname{Hom}(M(r_i), M(t_j)) = 0.
    \end{align*}
\end{lemma}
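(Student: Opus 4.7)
The plan is to split into the two cases $i = a_2$ and $i \neq a_2$ and apply the structural results already established: \cref{lem:M(a2)M(si)_r0_support} for $i = a_2$ and \cref{lem:M(r_i)_path} for $i \neq a_2$, in both cases with $N = M(t_j)$. Both lemmas require $N$ to be thin and indecomposable; this holds since $M(t_j)$ is thin by construction, and $Q(M(t_j))$ is connected via the path $s_{j+1} \to \cdots \to s_{a_1-1} \to r_{a_2}$.

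For $i = a_2$, a nonzero morphism $M(r_{a_2}) \to M(t_j)$ would force $M(t_j)_{r_0} \neq 0$ by \cref{lem:M(a2)M(si)_r0_support}, contradicting $M(t_j)_{r_0} = 0$ from \cref{def:M(ti)}. For $i \neq a_2$, \cref{lem:M(r_i)_path} would force either $M(t_j)_{s_1} \neq 0$ or some $S(r_k)$ with $k \neq a_2 - 1, a_2$ to be a summand of $\operatorname{soc} M(t_j)$. The first alternative is ruled out because $M(t_j)$ is supported at $s$-vertices only for indices $j+1, \ldots, a_1 - 1$, and $j \geq 1$. The second I would rule out by showing $\operatorname{soc} M(t_j) = S(r_{a_2})$.

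This socle computation is the one genuine step of the argument, and I expect it to go through by direct inspection: every vertex in the support of $M(t_j)$ other than $r_{a_2}$ has a nonzero outgoing arrow inside $Q(M(t_j))$, namely $r_k \to r_{k+1}$ for $1 \leq k < a_2$, $s_k \to s_{k+1}$ for $j+1 \leq k < a_1 - 1$, and $s_{a_1 - 1} \to r_{a_2}$, while the unique outgoing arrow of $r_{a_2}$ in $Q$, the arrow $r_{a_2} \to r_0$, becomes zero in $M(t_j)$ since $r_0$ is not in the support. In particular the only simple of the form $S(r_k)$ appearing in $\operatorname{soc} M(t_j)$ is $S(r_{a_2})$, whose index is precisely the one excluded in \cref{lem:M(r_i)_path}; this closes the case $i \neq a_2$ and completes the proof.
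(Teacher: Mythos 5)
Your proof is correct and takes essentially the same route as the paper, whose one-line proof cites \cref{lem:M(a2)M(si)_r0_support} for the case $i=a_2$ (exactly as you do) and \cref{lem:zero_path} for $i\neq a_2$, which you simply invoke in its packaged form \cref{lem:M(r_i)_path}. Your explicit verification that $\operatorname{soc} M(t_j)=S(r_{a_2})$ (the arrow $r_{a_2}\to r_0$ dying since $r_0$ is outside the support) is precisely the detail the paper leaves implicit, mirroring how it argues in the proof of \cref{lem:ext_M}, so your write-up is if anything the more complete one.
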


\begin{proof}
    This follows from \cref{lem:zero_path} and \cref{lem:M(a2)M(si)_r0_support}.
\end{proof}

\begin{lemma}\label{lem 6.7}
    Given two vertices labeled $r_i, s_j \in Q_0$, we have
    \begin{align*}
        \operatorname{Hom}(M(s_j), M(r_i)) = 0.
    \end{align*}
\end{lemma}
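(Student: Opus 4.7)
The plan is to deduce \cref{lem 6.7} from the immediately preceding \cref{lem:Hom(M(ri) M(tj))} via the self-duality of $A$, exactly in the spirit in which \cref{lem 6.3} is obtained from \cref{lem:hom(M(ti) M(tj))} and \cref{lem 6.5} from \cref{lem:hom(-M(si))}.

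First I would make the isomorphism $Q\cong Q^{\rm op}$ recorded earlier in the paper explicit as the involution $\phi$ that acts on the cycle by $r_i\mapsto r_{a_2-i}$ and swaps the two branches by $s_j\mapsto t_{a_1-j}$, $t_j\mapsto s_{a_1-j}$. A quick check shows that $\phi$ sends the potential $W$ to its cyclic reverse, so it induces an algebra isomorphism $A\cong A^{\rm op}$. Composing with the standard $k$-duality $D=\operatorname{Hom}_k(-,k)$ then yields a contravariant auto-equivalence $\widetilde D$ of the category of $A$-modules.

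Next I would identify how $\widetilde D$ acts on the indecomposable summands of $T$. Because each summand is thin and indecomposable, it is determined up to isomorphism by its support subquiver. A direct comparison of the supports read off from \cref{def:M(ri),def:M(ti),def:M(si)} with their images under $\phi$ gives
\[
\widetilde D\, M(r_i)\cong M(r_{a_2-i}),\qquad \widetilde D\, M(s_j)\cong M(t_{a_1-j}),\qquad \widetilde D\, M(t_j)\cong M(s_{a_1-j}).
\]

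Applying $\widetilde D$ to the Hom space in question then yields
\[
\operatorname{Hom}(M(s_j),M(r_i))\;\cong\;\operatorname{Hom}\bigl(\widetilde D M(r_i),\widetilde D M(s_j)\bigr)\;=\;\operatorname{Hom}(M(r_{a_2-i}),M(t_{a_1-j})),
\]
and the right-hand side vanishes by \cref{lem:Hom(M(ri) M(tj))} (applied with indices $a_2-i$ and $a_1-j$ in the ranges permitted by that lemma). I do not expect a serious obstacle; the only care required is bookkeeping the index assignments under $\phi$, which is routine. If one prefers a direct argument, the key observation is that $\operatorname{top}M(s_j)=S(r_0)$, so a morphism $f\colon M(s_j)\to M(r_i)$ is determined by the scalar $f_{r_0}$, and one then kills $f_{r_0}$ by commutativity on the arrow $r_i\to r_{i+1}$ (when $0<i<a_2-1$), on $r_0\to t_1$ (when $j=1$), on $t_{j-1}\to t_j$ (when $j\ge 2$ and $i\in\{a_2-1,a_2\}$), or immediately from $M(r_0)_{r_0}=0$ (when $i=0$).
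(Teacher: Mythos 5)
Your main argument is exactly the paper's proof: the paper disposes of this lemma in one line ("the argument is dual to \cref{lem:Hom(M(ri) M(tj))}"), and your explicit involution $\phi$ with $r_i\mapsto r_{a_2-i}$, $s_j\mapsto t_{a_1-j}$, $t_j\mapsto s_{a_1-j}$, together with the identifications $\widetilde D\,M(r_i)\cong M(r_{a_2-i})$ and $\widetilde D\,M(s_j)\cong M(t_{a_1-j})$, is precisely the correct way to spell that duality out, with all index ranges landing inside those permitted by \cref{lem:Hom(M(ri) M(tj))}. One small caveat about your optional direct sketch (which does not affect the primary proof): for $0<i<a_2-1$ the square on the arrow $r_i\to r_{i+1}$ only forces $f_{r_{i+1}}=0$ and leaves $f_{r_0}$ unconstrained, since the path $r_0\to\cdots\to r_{i+1}$ is zero in $M(r_i)$; for every $i\neq 0$ one should instead use the exit arrow $t_{j-1}\to t_j$ (or $r_0\to t_1$ when $j=1$), where $M(s_j)$ vanishes at the target but $M(r_i)$ carries the identity map, which kills $f_{t_{j-1}}=f_{r_0}$.
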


\begin{proof}
    The argument is dual to \cref{lem:Hom(M(ri) M(tj))}.
\end{proof}

\begin{lemma}\label{lem 6.8}
    Given two vertices labeled $s_i, t_j \in Q_0$, we have
    \begin{align*}
        \operatorname{Hom}(M(s_i), M(t_j)) = 0.
    \end{align*}
\end{lemma}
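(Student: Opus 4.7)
The plan is to deduce this vanishing as a direct application of Lemma~\ref{lem:M(a2)M(si)_r0_support}. First I would observe that $M(t_j)$ is a thin indecomposable $A$-module: from Definition~\ref{def:M(ti)} it has support of dimension one at each vertex where it is nonzero, and its support quiver (the cycle vertices $r_1, r_2, \ldots, r_{a_2}$ together with the branch $s_{j+1}, \ldots, s_{a_1-1}$) is connected, so indecomposability follows from the remark preceding Lemma~\ref{lem:zero_path}.

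Next I would apply Lemma~\ref{lem:M(a2)M(si)_r0_support} with $L = M(s_i)$ and $N = M(t_j)$. That lemma asserts that any nonzero morphism from $M(s_i)$ into an indecomposable thin module $N$ forces $N_{r_0} \neq 0$. But Definition~\ref{def:M(ti)} is explicit that $M(t_j)_{r_0} = 0$. This contradiction forces $\operatorname{Hom}(M(s_i), M(t_j)) = 0$.

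There is no real obstacle here; the structural work was already carried out in Section~\ref{sec:tilting_module}. If I preferred to avoid the direct citation, I could argue from scratch via Lemma~\ref{lem:zero_path}: the top of $M(s_i)$ is $S(r_0)$ and every vertex of $Q(M(s_i))$ is reachable from $r_0$ by a nonzero path inside $Q(M(s_i))$, so $f_{r_0} = 0$ (forced by $M(t_j)_{r_0} = 0$) propagates to $f_x = 0$ at every vertex $x$. This mirrors the strategy used for the dual statement in Lemma~\ref{lem 6.7}, consistent with the $Q \cong Q^{\mathrm{op}}$ symmetry noted earlier.
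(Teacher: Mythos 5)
Your proof is correct and matches the paper's argument: the paper's one-line proof (the top of $M(s_i)$ is $S(r_0)$ and $r_0$ is not in the support of $M(t_j)$) is precisely the content of \cref{lem:M(a2)M(si)_r0_support}, which you cite directly, and your fallback via \cref{lem:zero_path} is just that lemma's proof unpacked. Nothing further is needed.
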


\begin{proof}
    This follows from the fact that the top of $M(s_i)$ is $S(r_0)$ and the vertex $r_0$ does not lie in the support of $M(t_j)$.
\end{proof}

\medskip

The results of this subsection can be combined in the following theorem. Recall that $A$ is isomorphic to its dual.
\begin{theorem}\label{thm:end_T}
The endomorphism algebra of $T$ is isomorphic to (the dual of) $A$,
\[\operatorname{End}_A T\cong A.\]
Furthermore, the mapping $x \mapsto M(x)$ induces an isomorphism of quivers $Q_A^{\operatorname{op}} \to Q_{\operatorname{End}_A T}$. In particular, the quiver $Q_{\operatorname{End}_A T}$ of $\operatorname{End}_A T$ is given by
    \begin{center}
    \begin{tikzpicture}[<-,scale=.7] 
        \node (r0) at (60:2.5cm) {$M(r_0)$}; \node (ra2) at (120:2.5cm) {$M(r_{a_2})$};
        \node (ra2-1) at (180:2.5cm) {$M(r_{a_2-1})$};
        \node (r1) at (0:2.5cm) {$M(r_1)$};
        
        \node (t1) at ($(r0) + (2.5,0)$) {$M(t_1)$};
        \node (t2) at ($(t1) + (1.7,0)$) {};
        \node (ta1-2) at ($(t2) + (1.7,0)$) {};
        \node (ta1-1) at ($(ta1-2) + (2.1,0)$) {$M(t_{a_1-1})$};
        
        \node (sa1-1) at ($(ra2) + (-3,0)$) {$M(s_{a_1-1})$};
        \node (sa1-2) at ($(sa1-1) + (-2.1,0)$) {};
        \node (s2) at ($(sa1-2) + (-1.7,0)$) {};
        \node (s1) at ($(s2) + (-1.7,0)$) {$M(s_{1})$};
        
        \draw (s1) -- (s2);
        \draw[thick, loosely dotted, -] (s2) -- (sa1-2);
        \draw (sa1-2) -- (sa1-1);
        \draw (sa1-1) -- (ra2);
        
        \draw (r0) -- (t1);
        \draw (t1) -- (t2);
        \draw[thick, loosely dotted, -] (t2) -- (ta1-2);
        \draw (ta1-2) -- (ta1-1);
        
        \draw (165:2.5cm) arc (165:135:2.5cm);
        \draw (45:2.5cm) arc (45:15:2.5cm);
        \draw (ra2) -- (r0);
        \draw (225:2.5cm) arc (225:195:2.5cm);
        \draw (-15:2.5cm) arc (-15:-45:2.5cm);
        \draw[thick, loosely dotted, -] (-55:2.5cm) arc (-55:-125:2.5cm);
    
    \end{tikzpicture}
    \end{center} 
\end{theorem}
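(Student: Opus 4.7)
The plan is to exhibit an algebra homomorphism $\bar\Psi : A^{\operatorname{op}} \to \operatorname{End}_A T$ and show it is an isomorphism, after which the self-duality $A \cong A^{\operatorname{op}}$ coming from the quiver isomorphism $Q \cong Q^{\operatorname{op}}$ yields $\operatorname{End}_A T \cong A$. For each arrow $\alpha : x \to y$ of $Q$, Lemmata~\ref{lem 6.1}--\ref{lem 6.5} give $\operatorname{Hom}_A(M(y),M(x)) = k$, so I fix a nonzero generator $\varphi_\alpha$. I then define $\Psi : kQ^{\operatorname{op}} \to \operatorname{End}_A T$ by sending each vertex idempotent $e_x$ to the projection $\pi_x$ of $T$ onto $M(x)$, and each reversed arrow $\bar\alpha$ to $\varphi_\alpha$.

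I would first check that $\Psi$ vanishes on the Jacobian ideal $I^{\operatorname{op}}$. Its generators are the cycle subpaths of length $a_2$ in $Q^{\operatorname{op}}$, each of which starts at some $r_i$ and ends at $r_{i+1}$ (the cycle runs the opposite way in $Q^{\operatorname{op}}$). The image of such a path lies in $\operatorname{Hom}_A(M(r_i), M(r_{i+1})) = 0$ by \cref{lem 6.1}, so $\Psi$ descends to $\bar\Psi : A^{\operatorname{op}} \to \operatorname{End}_A T$. Next I would prove $\bar\Psi$ is an isomorphism by combining a dimension count with a surjectivity argument. The dimension count sums $\dim \operatorname{Hom}_A(M(x), M(y))$ over all pairs using Lemmata~\ref{lem 6.1}--\ref{lem 6.8} and matches it against $\dim A$ obtained from counting nonzero paths in $Q$ modulo the Jacobian relations; the cleanest route is to verify vertex-by-vertex that $\dim \operatorname{Hom}_A(M(y), M(x)) = \dim e_x A e_y$. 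For surjectivity, each $M(x)$ is thin and indecomposable, so \cref{lem:zero_path} forces any nonzero morphism $M(x) \to M(y)$ to be determined by a single scalar; unwinding the short exact sequences in \cref{def:M(ti)} and \cref{def:M(si)} one sees such a morphism factors as a composition of $\varphi_\alpha$'s along a path from $x$ to $y$ in $Q^{\operatorname{op}}$. The quiver description follows automatically, since the arrows of $Q_{\operatorname{End}_A T}$ are precisely the classes of the $\varphi_\alpha$'s modulo $\operatorname{rad}^2$, each reversing an arrow of $Q$.

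The main obstacle is the surjectivity step: to rule out unexpected morphisms beyond the compositions of the chosen irreducibles, one must carry out a careful case analysis through the short exact sequences defining $M(t_i)$ and $M(s_i)$ and their interaction with Lemmata~\ref{lem:hom(-M(si))} and~\ref{lem 6.5}, especially in the cases where those lemmas replace a Hom by $\operatorname{Hom}(M(\cdot), M(r_{a_2}))$ or $\operatorname{Hom}(M(\cdot), M(r_0))$. The dimension bookkeeping is routine but tedious.
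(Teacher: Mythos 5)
Your proposal is correct and follows essentially the same route as the paper: the entire content in both cases is the list of Hom-space computations in Lemmata~\ref{lem 6.1}--\ref{lem 6.8}, from which one identifies the quiver of $\operatorname{End}_AT$ as $Q^{\operatorname{op}}$ with the relations coming from the potential and concludes via the self-duality $A\cong A^{\operatorname{op}}$. Your explicit homomorphism $kQ^{\operatorname{op}}\to\operatorname{End}_AT$ together with the vertex-by-vertex dimension count is just a more formal scaffolding of the paper's terse reading-off argument (with the one implicit ingredient, $\operatorname{Hom}(M(t_i),M(r_0))=k$ via the inclusion in Definition~\ref{def:M(ti)}, equally implicit in the paper).
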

\begin{proof}

By Lemma~\ref{lem 6.1}, the full subquiver with vertices $M(r_i)$ is an oriented cycle of length $a_2+1$ in which every subpath of length $a_2$ is zero. Lemmata \ref{lem:hom(M(ti) M(tj))} and \ref{lem 6.3} imply that the full subquivers with vertices $M(s_i)$ and $M(t_i)$, respectively, are equioriented of  type $\mathbb{A}$ and there are no relations on these branches.
Lemmata~\ref{lem:hom(-M(si))} and \ref{lem 6.5} imply that there  is an arrow $M(r_{a_2})\to M(s_{a_1-1})$ and an arrow $M(t_1)\to M(r_0)$ and that there are no relations between the cycle and the two branches. 
The Lemmata~\ref{lem:Hom(M(ri) M(tj))} -- \ref{lem 6.8}
show that are no other arrows. This shows that the quiver of $\textup{End}_AT$ is the one in the theorem and its relations are those coming from the potential.
\end{proof}

\section{Mutation and the cluster algebra \texorpdfstring{$\mathcal{A}(Q)$}{A(Q)}}\label{sec:mutation}

In \cref{thm:T-cto}, we have shown that $T$ induces a cluster-tilting object in the cluster category $\mathcal{C}_Q$ introduced in \cite{BMRRT}. From \cref{thm:end_T}, the quiver given by $T$ is the opposite quiver $Q^{op}$, so using a result in \cite{ASS} we see that the correspondence $T \mapsto A[1]$ induces a cluster automorphism $\sigma$.

In this section, we define a mutation sequence $\mu$ on the quiver $Q$ and show that $Q$ is of acyclic type. The cluster automorphism $\sigma$ is given by $\mu$, which we prove by showing that $\mu$ sends $T$ to $A[1] = \oplus_{x \in Q_0} P(x)[1]$ in the cluster category. Furthermore, we show that $\sigma$ has order $2$.

\begin{definition}
    We define the mutation sequence $\mu$ on the quiver $Q$ by
    \begin{align*}
        \mu := \mu_{\mkern-1.5mu R}^{-1} \circ \mu_S \circ \mu_T \circ \mu_{\mkern-1.5mu R},
    \end{align*}
    where
    \begin{align*}
        &\mu_{\mkern-1.5mu R} = \mu_{r_{a_2}-1} \circ \cdots \circ \mu_{r_2} \circ \mu_{r_1}, \\
        &\mu_S = \mu_{s_1}\mu_{s_2}\mu_{s_3} \cdots \mu_{r_{a_2}} \circ \cdots \circ \mu_{s_1}\mu_{s_2}\mu_{s_3} \circ \mu_{s_1}\mu_{s_2} \circ \mu_{s_1}, \\
        &\mu_T = \mu_{t_{a_1-1}}\mu_{t_{a_1-2}}\mu_{t_{a_1-3}} \cdots \mu_{r_0} \circ \cdots \circ \mu_{t_{a_1-1}}\mu_{t_{a_1-2}}\mu_{t_{a_1-3}} \circ \mu_{t_{a_1-1}}\mu_{t_{a_1-2}} \circ \mu_{t_{a_1-1}}.
    \end{align*}
\end{definition}

We see the effect of these mutation sequences on the quiver illustrated in Figures~\ref{fig:mutation_R}-\ref{fig:mu_Q}. The quiver $\mu_{\mkern-1.5mu R}\, Q$ is seen in the left picture of \cref{fig:mutation_ST}.

\begin{figure}[!htb]
    \centering
    \begin{tikzpicture}[->,scale=.4,,baseline={(0,-0.17)},every node/.style={scale=0.7}] 
        \node (r0) at (60:2cm) {$r_0$}; \node (ra2) at (120:2cm) {$r_{a_2}$};
        \node (r1) at (0:2cm) {$r_1$};

        \draw (165:2cm) arc (165:135:2cm);
        \draw (45:2cm) arc (45:15:2cm);
        \draw (105:2cm) arc (105:75:2cm);
        \draw (-15:2cm) arc (-15:-45:2cm);
        \draw[thick, loosely dotted, -] (-55:2cm) arc (-55:-195:2cm);
    \end{tikzpicture}
    $\overset{\mu_{r_1}}{\leadsto}$
    \begin{tikzpicture}[->,scale=.4,baseline={(0,-0.17)},every node/.style={scale=0.7}] 
        \node (r0) at (60:2cm) {$r_0$}; \node (ra2) at (120:2cm) {$r_{a_2}$};
        \node (r1) at (0:2cm) {$r_1$};
        \node (r2) at (-50:2cm) {$r_2$};

        \draw (165:2cm) arc (165:135:2cm);
        \draw (15:2cm) arc (15:45:2cm);
        \draw (105:2cm) arc (105:75:2cm);
        \draw (r0) -- (r2);
        \draw (-35:2cm) arc (-35:-15:2cm);
        \draw (-65:2cm) arc (-65:-85:2cm);
        \draw[thick, loosely dotted, -] (-90:2cm) arc (-90:-195:2cm);
    \end{tikzpicture}
    $\overset{\mu_{r_2}}{\leadsto}$
    \begin{tikzpicture}[->,scale=.4, baseline={(0,-0.17)},every node/.style={scale=0.7}] 
        \node (r0) at (60:2cm) {$r_0$}; \node (ra2) at (120:2cm) {$r_{a_2}$};

        \node (r1) at (0:2cm) {$r_1$};
        \node (r2) at (-50:2cm) {$r_2$};
        \node (r3) at (-90:2cm) {$r_3$};
        \draw (165:2cm) arc (165:135:2cm);
        \draw (105:2cm) arc (105:75:2cm);
        \draw (r2) -- (r0);
        \draw (r0) -- (r3);
        \draw (-15:2cm) arc (-15:-35:2cm);
        \draw (-75:2cm) arc (-75:-65:2cm);
        \draw (-105:2cm) arc (-105:-120:2cm);
        \draw[thick, loosely dotted, -] (-125:2cm) arc (-125:-190:2cm);
    \end{tikzpicture}
    $\overset{\mu_{r_3}}{\leadsto} \cdots \overset{\mu_{r_{i-1}}}{\leadsto}$
    \begin{tikzpicture}[->,scale=.4, ,baseline={(0,-0.17)},every node/.style={scale=0.7}] 
        \node (r0) at (60:2cm) {$r_0$}; \node (ra2) at (120:2cm) {$r_{a_2}$};

        \node (r1) at (0:2cm) {$r_1$};
        \node (ri-1) at (-95:2cm) {$r_{i-1}$};
        \node(ri) at (-145:2cm) {$r_i$};
        \draw (155:2cm) arc (155:135:2cm);
        \draw (105:2cm) arc (105:75:2cm);
        \draw (ri-1) -- (r0);
        \draw (r0) -- (ri);
        \draw (-15:2cm) arc (-15:-30:2cm);
        \draw (-60:2cm) arc (-60:-75:2cm);
        \draw (-130:2cm) arc (-130:-117:2cm);
        \draw (-155:2cm) arc (-155:-175:2cm);
        \draw[thick, loosely dotted, -] (-185:2cm) arc (-185:-200:2cm);
        \draw[thick, loosely dotted, -] (-35:2cm) arc (-35:-55:2cm);
    \end{tikzpicture}
    $\overset{\mu_{r_i}}{\leadsto} \cdots \overset{\mu_{r_{a_2-1}}}{\leadsto}$
    \begin{tikzpicture}[->,scale=.4,baseline={(0,-0.15)},every node/.style={scale=0.7}] 
        \node (r0) at (60:2cm) {$r_0$}; \node (ra2) at (120:2cm) {$r_{a_2}$};

        \node (r1) at (0:2cm) {$r_1$};
        \node (ra2-1) at (170:2cm) {$r_{a_2-1}$};
        
        \draw (ra2-1) -- (r0);
        \draw (135:2cm) arc (135:155:2cm);
        \draw (-15:2cm) arc (-15:-35:2cm);
        \draw (-155:2cm) arc (-155:-175:2cm);
        \draw[thick, loosely dotted, -] (-45:2cm) arc (-45:-145:2cm);
    \end{tikzpicture}
    \caption{The mutation sequence $\mu_{\mkern-1.5mu R}$ on the cycle $r_0 \to r_1 \to \cdots \to r_{a_2} \to r_0$.}
    \label{fig:mutation_R}
\end{figure}
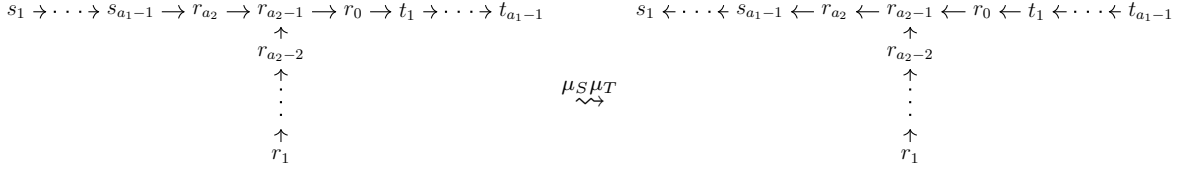
\begin{figure}[!htb]
    \centering
    \begin{tikzpicture}[->,scale=.4,baseline={(0,-1.3)},every node/.style={scale=0.7}] 
        \node (s_1) at (0,0) {$s_1$};
        \node (s_a1-1) at (3.8,0) {$s_{a_1-1}$};
        \node (r_a2) at (6.3,0) {$r_{a_2}$};
        \node (r_a2-1) at (8.8,0) {$r_{a_2-1}$};
        \node (r_0) at (11.2,0) {$r_0$};
        \node (t_1) at (13,0) {$t_1$};
        \node (t_a1-1) at (16.8,0) {$t_{a_1-1}$};
        
        \node (r_a2-2) at (8.8,-1.4) {$r_{a_2-2}$};
        \node (r_1) at (8.8, -4.8) {$r_1$};
        
        \draw (s_a1-1) -- (r_a2);
        \draw (r_a2) -- (r_a2-1);
        \draw (r_a2-1) -- (r_0);
        \draw (r_0) -- (t_1);
        
        \draw[->] (s_1) -- (1,0);
        \draw[thick, loosely dotted, -] (1.25,0) -- (2.45,0);
        \draw[->] (2.4,0) -- (s_a1-1);
        
        \draw[->] (t_1) -- (14,0);
        \draw[thick, loosely dotted, -] (14.25,0) -- (15.45,0);
        \draw[->] (15.4,0) -- (t_a1-1);
        
        \draw[<-] (r_a2-1) -- (r_a2-2);
        \draw[<-] (r_a2-2) -- (8.8, -2.3);
        \draw[thick, loosely dotted, -] (8.8,-2.55) -- (8.8,-3.75);
        \draw[<-] (8.8,-3.9) -- (r_1);
    \end{tikzpicture}
    $\overset{{\mu_S\mu_T}}{\leadsto}$
    \begin{tikzpicture}[->,scale=.4,baseline={(0,-1.3)},every node/.style={scale=0.7}] 
        \node (s_1) at (0,0) {$s_1$};
        \node (s_a1-1) at (3.8,0) {$s_{a_1-1}$};
        \node (r_a2) at (6.3,0) {$r_{a_2}$};
        \node (r_a2-1) at (8.8,0) {$r_{a_2-1}$};
        \node (r_0) at (11.2,0) {$r_0$};
        \node (t_1) at (13,0) {$t_1$};
        \node (t_a1-1) at (16.8,0) {$t_{a_1-1}$};
        
        \node (r_a2-2) at (8.8,-1.4) {$r_{a_2-2}$};
        \node (r_1) at (8.8, -4.8) {$r_1$};
        
        \draw[<-] (s_a1-1) -- (r_a2);
        \draw[<-] (r_a2) -- (r_a2-1);
        \draw[<-] (r_a2-1) -- (r_0);
        \draw[<-] (r_0) -- (t_1);
        
        \draw[<-] (s_1) -- (1,0);
        \draw[thick, loosely dotted, -] (1.25,0) -- (2.45,0);
        \draw[<-] (2.4,0) -- (s_a1-1);
        
        \draw[<-] (t_1) -- (14,0);
        \draw[thick, loosely dotted, -] (14.25,0) -- (15.45,0);
        \draw[<-] (15.4,0) -- (t_a1-1);
        
        \draw[<-] (r_a2-1) -- (r_a2-2);
        \draw[<-] (r_a2-2) -- (8.8, -2.3);
        \draw[thick, loosely dotted, -] (8.8,-2.55) -- (8.8,-3.75);
        \draw[<-] (8.8,-3.9) -- (r_1);
    \end{tikzpicture}
    \caption{The mutation sequence $\mu_S\mu_T$ on the quiver $\mu_{\mkern-1.5mu R}\, Q$.}
    \label{fig:mutation_ST}
\end{figure}
\begin{figure}[!htb]
    \centering
        \begin{tikzpicture}[->,scale=.4,baseline={(0,-0.15)},every node/.style={scale=0.7}] 
        \node (r0) at (120:2cm) {$r_0$}; \node (ra2) at (60:2cm) {$r_{a_2}$};

        \node (r1) at (0:2cm) {$r_1$};
        \node (ra2-1) at (170:2cm) {$r_{a_2-1}$};
        
        \draw (ra2-1) -- (ra2);
        \draw (135:2cm) arc (135:155:2cm);
        \draw (-15:2cm) arc (-15:-35:2cm);
        \draw (-155:2cm) arc (-155:-175:2cm);
        \draw[thick, loosely dotted, -] (-45:2cm) arc (-45:-145:2cm);
    \end{tikzpicture}
    $\overset{\mu_{r_{a_2-1}}}{\leadsto} \cdots \overset{\mu_{r_{i}}}{\leadsto}$
    \begin{tikzpicture}[->,scale=.4, ,baseline={(0,-0.17)},every node/.style={scale=0.7}] 
        \node (r0) at (120:2cm) {$r_0$}; \node (ra2) at (60:2cm) {$r_{a_2}$};

        \node (r1) at (0:2cm) {$r_1$};
        \node (ri-1) at (-95:2cm) {$r_{i-1}$};
        \node(ri) at (-145:2cm) {$r_i$};
        \draw (155:2cm) arc (155:135:2cm);
        \draw (105:2cm) arc (105:75:2cm);
        \draw (ri-1) -- (ra2);
        \draw (ra2) -- (ri);
        \draw (-15:2cm) arc (-15:-30:2cm);
        \draw (-60:2cm) arc (-60:-75:2cm);
        \draw (-130:2cm) arc (-130:-117:2cm);
        \draw (-155:2cm) arc (-155:-175:2cm);
        \draw[thick, loosely dotted, -] (-185:2cm) arc (-185:-200:2cm);
        \draw[thick, loosely dotted, -] (-35:2cm) arc (-35:-55:2cm);
    \end{tikzpicture}
    $\overset{\mu_{r_{i-1}}}{\leadsto} \cdots \overset{\mu_{r_{3}}}{\leadsto}$
    \begin{tikzpicture}[->,scale=.4, baseline={(0,-0.17)},every node/.style={scale=0.7}] 
        \node (r0) at (120:2cm) {$r_0$}; \node (ra2) at (60:2cm) {$r_{a_2}$};

        \node (r1) at (0:2cm) {$r_1$};
        \node (r2) at (-50:2cm) {$r_2$};
        \node (r3) at (-90:2cm) {$r_3$};
        \draw (165:2cm) arc (165:135:2cm);
        \draw (105:2cm) arc (105:75:2cm);
        \draw (r2) -- (ra2);
        \draw (ra2) -- (r3);
        \draw (-15:2cm) arc (-15:-35:2cm);
        \draw (-75:2cm) arc (-75:-65:2cm);
        \draw (-105:2cm) arc (-105:-120:2cm);
        \draw[thick, loosely dotted, -] (-125:2cm) arc (-125:-190:2cm);
    \end{tikzpicture}
    $\overset{\mu_{r_{2}}}{\leadsto}$
    \begin{tikzpicture}[->,scale=.4,baseline={(0,-0.17)},every node/.style={scale=0.7}] 
        \node (r0) at (120:2cm) {$r_0$}; \node (ra2) at (60:2cm) {$r_{a_2}$};
        \node (r1) at (0:2cm) {$r_1$};
        \node (r2) at (-50:2cm) {$r_2$};

        \draw (165:2cm) arc (165:135:2cm);
        \draw (15:2cm) arc (15:45:2cm);
        \draw (105:2cm) arc (105:75:2cm);
        \draw (ra2) -- (r2);
        \draw (-35:2cm) arc (-35:-15:2cm);
        \draw (-65:2cm) arc (-65:-85:2cm);
        \draw[thick, loosely dotted, -] (-90:2cm) arc (-90:-195:2cm);
    \end{tikzpicture}
    $\overset{\mu_{r_{1}}}{\leadsto}$
    \begin{tikzpicture}[->,scale=.4,,baseline={(0,-0.17)},every node/.style={scale=0.7}] 
        \node (r0) at (120:2cm) {$r_0$}; \node (ra2) at (60:2cm) {$r_{a_2}$};
        \node (r1) at (0:2cm) {$r_1$};

        \draw (165:2cm) arc (165:135:2cm);
        \draw (45:2cm) arc (45:15:2cm);
        \draw (105:2cm) arc (105:75:2cm);
        \draw (-15:2cm) arc (-15:-45:2cm);
        \draw[thick, loosely dotted, -] (-55:2cm) arc (-55:-195:2cm);
    \end{tikzpicture}
    \caption{The mutation sequence $\mu_{\mkern-1.5mu R}$ on the cycle $r_1 \to r_2 \to \cdots \to  r_{a_2-1} \to r_0 \to r_{a_2} \to r_1$, with $\mu_{\mkern-1.5mu R}^{-1}$ shown.}
    \label{fig:mutation_R_inverse}
\end{figure}
\begin{figure}[!htb]
    \centering
    \begin{tikzpicture}[->,scale=.4,every node/.style={scale=0.7}, baseline={(0,-0.2)}] 
        \node (r0) at (60:2cm) {$r_0$}; \node (ra2) at (120:2cm) {$r_{a_2}$};
        \node (ra2-1) at (180:2cm) {$r_{a_2-1}$};
        \node (r1) at (0:2cm) {$r_1$};
        
        \node (t1) at ($(r0) + (2,0)$) {$t_1$};
        \node (t2) at ($(t1) + (1.2,0)$) {};
        \node (ta1-2) at ($(t2) + (1.2,0)$) {};
        \node (ta1-1) at ($(ta1-2) + (1.6,0)$) {$t_{a_1-1}$};
        
        \node (sa1-1) at ($(ra2) + (-2.5,0)$) {$s_{a_1-1}$};
        \node (sa1-2) at ($(sa1-1) + (-1.6,0)$) {};
        \node (s2) at ($(sa1-2) + (-1.2,0)$) {};
        \node (s1) at ($(s2) + (-1.2,0)$) {$s_{1}$};
        
        \draw (s1) -- (s2);
        \draw[thick, loosely dotted, -] (s2) -- (sa1-2);
        \draw (sa1-2) -- (sa1-1);
        \draw (sa1-1) -- (ra2);
        
        \draw (r0) -- (t1);
        \draw (t1) -- (t2);
        \draw[thick, loosely dotted, -] (t2) -- (ta1-2);
        \draw (ta1-2) -- (ta1-1);
        
        \draw (165:2cm) arc (165:135:2cm);
        \draw (45:2cm) arc (45:15:2cm);
        \draw (105:2cm) arc (105:75:2cm);
        \draw (225:2cm) arc (225:195:2cm);
        \draw (-15:2cm) arc (-15:-45:2cm);
        \draw[thick, loosely dotted, -] (-55:2cm) arc (-55:-125:2cm);
    
    \end{tikzpicture}
    $\overset{\mu}{\leadsto}$
    \begin{tikzpicture}[<-,scale=.4,every node/.style={scale=0.7}, baseline={(0,-0.2)}] 
        \node (r0) at (60:2cm) {$r_0$}; \node (ra2) at (120:2cm) {$r_{a_2}$};
        \node (ra2-1) at (180:2cm) {$r_1$};
        \node (r1) at (0:2cm) {$r_{a_2-1}$};
        
        \node (t1) at ($(r0) + (2,0)$) {$t_1$};
        \node (t2) at ($(t1) + (1.2,0)$) {};
        \node (ta1-2) at ($(t2) + (1.2,0)$) {};
        \node (ta1-1) at ($(ta1-2) + (1.6,0)$) {$t_{a_1-1}$};
        
        \node (sa1-1) at ($(ra2) + (-2.5,0)$) {$s_{a_1-1}$};
        \node (sa1-2) at ($(sa1-1) + (-1.6,0)$) {};
        \node (s2) at ($(sa1-2) + (-1.2,0)$) {};
        \node (s1) at ($(s2) + (-1.2,0)$) {$s_{1}$};
        
        \draw (s1) -- (s2);
        \draw[thick, loosely dotted, -] (s2) -- (sa1-2);
        \draw (sa1-2) -- (sa1-1);
        \draw (sa1-1) -- (ra2);
        
        \draw (r0) -- (t1);
        \draw (t1) -- (t2);
        \draw[thick, loosely dotted, -] (t2) -- (ta1-2);
        \draw (ta1-2) -- (ta1-1);
        
        \draw (165:2cm) arc (165:135:2cm);
        \draw (45:2cm) arc (45:15:2cm);
        \draw (105:2cm) arc (105:75:2cm);
        \draw (225:2cm) arc (225:195:2cm);
        \draw (-15:2cm) arc (-15:-45:2cm);
        \draw[thick, loosely dotted, -] (-55:2cm) arc (-55:-125:2cm);
    
    \end{tikzpicture}
    \caption{The mutation sequence $\mu = \mu_{\mkern-1.5mu R}^{-1}\mu_T \mu_S \mu_{\mkern-1.5mu R}$ on the quiver $Q$.}
    \label{fig:mu_Q}
\end{figure}
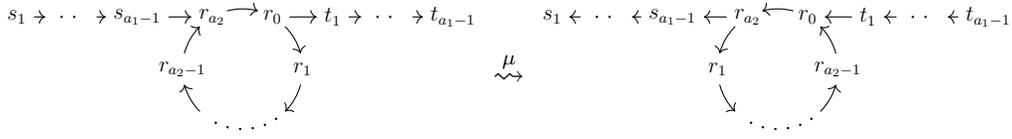

\begin{theorem}
    The cluster algebra $\mathcal{A}(Q)$ is of acyclic type $\mu_{\mkern-1.5mu R}\, Q$. Moreover, the cases where it is of finite or tame type are the following.
 \[ \begin{array}{ll}
 \mathbb{A}_{2a_1+1} & \textup{if $a_2=2$};\\
 \mathbb{D}_{a_2+1} & \textup{if $a_1=1$};\\ 
 \mathbb{E}_{6} & \textup{if $(a_1,a_2)=(2,3)$};\\
 \widetilde{\mathbb{E}}_{7} & \textup{if $(a_1,a_2)=(3,3)$};\\
\widetilde{ \mathbb{E}}_{6} & \textup{if $(a_1,a_2)=(2,4)$};
  \end{array}
 \] 
 and it is of wild acyclic type in all other cases.
\end{theorem}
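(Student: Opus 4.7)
The plan is to split the argument into two steps. Step~1 verifies that $\mu_{\mkern-1.5mu R} Q$ is the acyclic quiver obtained from $Q$ by ``opening up'' the oriented cycle into a tree with three branches. Step~2 classifies the resulting tree according to the parameters $(a_1,a_2)$.

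For Step~1, I would show that the mutation sequence $\mu_{\mkern-1.5mu R} = \mu_{r_{a_2-1}} \circ \cdots \circ \mu_{r_1}$ applied to $(Q,W)$ produces the tree pictured on the left of Figure~\ref{fig:mutation_ST}: a central vertex $r_{a_2-1}$ with three arms of edge-lengths $a_1$ (through $r_{a_2}$ out to $s_1$), $a_1$ (through $r_0$ out to $t_{a_1-1}$), and $a_2-2$ (through the vertices $r_{a_2-2}, r_{a_2-3}, \ldots, r_1$). The verification proceeds by induction on $j$, tracking the Derksen--Weyman--Zelevinsky QP-mutation at each step: mutating at $r_j$ reverses its two cycle-arrows, adjoins a composition arrow shortcutting $r_j$, and introduces a $2$-cycle with the arrow produced by the previous mutation which cancels under reduction of the potential. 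Consequently each $\mu_{r_j}$ peels the vertex $r_j$ off the oriented cycle and appends it to a growing pendant chain. After $a_2-1$ mutations the cycle has been fully unfolded and we obtain the stated tree; in particular $\mu_{\mkern-1.5mu R} Q$ is acyclic. Since mutation-equivalent quivers define the same cluster algebra, $\mathcal{A}(Q)$ is of acyclic type $\mu_{\mkern-1.5mu R} Q$.

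For Step~2, I apply the standard trichotomy for acyclic cluster algebras (equivalently, for path algebras of trees) whereby the type is Dynkin, affine Dynkin, or wild according to the underlying tree. The case $a_2=2$ is degenerate: the third arm has length $0$, $\mu_{\mkern-1.5mu R} Q$ is a path on $2a_1+1$ vertices, and the type is $\mathbb{A}_{2a_1+1}$. Otherwise $\mu_{\mkern-1.5mu R} Q$ is the trivalent tree $T_{a_1,a_1,a_2-2}$, and the type is determined by the sign of
\[ \frac{1}{a_1+1}+\frac{1}{a_1+1}+\frac{1}{a_2-1}-1 \;=\; \frac{2}{a_1+1}+\frac{1}{a_2-1}-1, \]
namely Dynkin when this is positive, affine when zero, and wild when negative. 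A short case analysis then picks out exactly the cases in the theorem: $a_1=1$ always gives $T_{1,1,a_2-2}=\mathbb{D}_{a_2+1}$; the Dynkin case with $a_1\ge 2$ occurs only at $(a_1,a_2)=(2,3)$, with sum $\tfrac{7}{6}$ and type $T_{1,2,2}=\mathbb{E}_6$; the two affine cases are $(a_1,a_2)=(2,4)$, giving $T_{2,2,2}=\widetilde{\mathbb{E}}_6$, and $(a_1,a_2)=(3,3)$, giving $T_{1,3,3}=\widetilde{\mathbb{E}}_7$; every other pair with $a_1\ge 2$ and $a_2\ge 3$ gives a strictly negative sum and hence wild type.

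The main obstacle is Step~1: the careful bookkeeping of the QP-mutations and the verification that the $2$-cycles produced along the way cancel cleanly via the potential, so that the final quiver has no oriented cycles at all. An efficient strategy is to set up the induction so that at each stage one is only checking the behavior at a single newly mutated vertex, using the specific cyclic form of the potential $W$ and the observation, suggested by the iterative pictures in Figure~\ref{fig:mutation_R}, that every intermediate quiver is the union of a shortening cycle with an elongating pendant chain attached at $r_0$.
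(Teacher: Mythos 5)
Your overall route is the same as the paper's: the paper's proof of this theorem is nothing more than the computation of $\mu_{\mkern-1.5mu R}\, Q$ recorded pictorially in \cref{fig:mutation_R} and assembled with the two branches in \cref{fig:mutation_ST}, with the finite/tame classification of the resulting three-armed tree left implicit. Your Step~2 — the trichotomy for the star tree with arms of edge-lengths $a_1, a_1, a_2-2$ governed by the sign of $\tfrac{2}{a_1+1}+\tfrac{1}{a_2-1}-1$, together with the case analysis — is correct and exhaustive, and in fact supplies the detail the paper omits: your arm lengths match the quiver in \cref{fig:mutation_ST}, the degenerate case $a_2=2$ gives the path $\mathbb{A}_{2a_1+1}$, $a_1=1$ gives $\mathbb{D}_{a_2+1}$, and the computations at $(2,3)$, $(2,4)$, $(3,3)$ and the wildness of all remaining cases check out. (One simplification: since only the type of $\mathcal{A}(Q)$ is at stake, plain Fomin--Zelevinsky quiver mutation, where $2$-cycle cancellation is built in, suffices; the Derksen--Weyman--Zelevinsky potential bookkeeping you propose is not needed for this theorem.)

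There is, however, a genuine defect in your Step~1 as written: the induction invariant ``every intermediate quiver is the union of a shortening cycle with an elongating pendant chain attached at $r_0$'' is false and would not be preserved. If the chain were attached to the rest of the quiver only through $r_0$, then the next mutation vertex $r_j$ would have just two incident arrows, no path through $r_j$ would meet the chain, no $2$-cycle would ever be created, and mutating at $r_j$ would hang a \emph{second} arm off $r_0$ instead of extending the existing one — ultimately producing a star at $r_0$, not the tree of \cref{fig:mutation_ST}. The correct invariant, visible in the intermediate panels of \cref{fig:mutation_R}, is that after $\mu_{r_{j-1}}\cdots\mu_{r_1}$ the quiver consists of the shortened cycle $r_0\to r_j\to\cdots\to r_{a_2}\to r_0$, the chain $r_1\to\cdots\to r_{j-1}$, and the oriented triangle $r_0\to r_j\to r_{j-1}\to r_0$ gluing them (this triangle carries the $3$-cycle term of the mutated potential). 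Mutation at $r_j$ then reverses \emph{three} arrows, not two: the reversal of $r_j\to r_{j-1}$ is precisely what appends $r_j$ to the chain; the composite $r_0\to r_{j-1}$ cancels against $r_{j-1}\to r_0$ as you say; and the composite $r_0\to r_{j+1}$ recreates the triangle one step further along. Note also that the final step $\mu_{r_{a_2-1}}$ cancels \emph{two} $2$-cycles, one of them against the original arrow $r_{a_2}\to r_0$ — this is what finally destroys the cycle. With the invariant strengthened in this way your induction goes through and reproduces the paper's computation exactly.
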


\begin{proof}
    The quiver $\mu_{\mkern-1.5mu R}\, Q$ is given by \cref{fig:mutation_R} together with the branches $s_1 \to \cdots \to s_{a_1-1} \to r_{a_2}$ and $r_0 \to t_1 \to \cdots \to t_{a_1-1}$, and is shown in \cref{fig:mutation_ST}.
\end{proof}

\begin{remark}
    The quiver $\mu_T \mu_{\mkern-1.5mu R}\, Q$ is the quiver $T_{p,q,r}$ of \cite{Arnold, DW}, with $p = q = a_1+1$ and $r=a_2-1$.
\end{remark}

\begin{lemma}\label{lem 7.4}
    We have
    \[\mu_S(\mu_{\mkern-1.5mu R}\, T) = \mu_{S}^{-1}(\mu_{\mkern-1.5mu R}\, T),\]
    and dually
    \[\mu_T(\mu_{\mkern-1.5mu R}\, T) = \mu_{T}^{-1}(\mu_{\mkern-1.5mu R}\, T).\]
\end{lemma}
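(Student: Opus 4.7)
The plan is to restrict attention to the $\mathbb{A}_{a_1}$-type subquiver of $\mu_{\mkern-1.5mu R}\, Q$ on which $\mu_S$ operates, and argue by induction on $a_1$ that $\mu_S$ acts as an involution on the corresponding cluster-tilting object. From Figure~\ref{fig:mutation_ST}, the subquiver of $\mu_{\mkern-1.5mu R}\, Q$ on the vertices $\{s_1, \ldots, s_{a_1-1}, r_{a_2}\}$ is the linearly oriented path $s_1 \to s_2 \to \cdots \to s_{a_1-1} \to r_{a_2}$, attached to the rest of $\mu_{\mkern-1.5mu R}\, Q$ only by the single arrow $r_{a_2} \to r_{a_2-1}$. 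Since every mutation appearing in $\mu_S$ is at a vertex of this subquiver, the effect of $\mu_S$ on $\mu_{\mkern-1.5mu R}\, T$ is controlled by this local $\mathbb{A}_{a_1}$ structure together with the outgoing arrow, and changes outside the subquiver can only propagate via $r_{a_2}$.

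Unfolded into individual mutations, $\mu_S$ is the index sequence $s_1,\, s_2\, s_1,\, s_3\, s_2\, s_1,\,\ldots,\, r_{a_2}\, s_{a_1-1}\, \cdots\, s_2\, s_1$, which is a reduced expression for the longest element $w_0 \in S_{a_1+1}$; its reverse, giving $\mu_S^{-1}$, is another reduced expression for the same $w_0$. I would proceed by induction on $a_1$. The base case $a_1 = 2$ is immediate: the sequence $\mu_{s_1}\mu_{r_{a_2}}\mu_{s_1}$ is palindromic, so $\mu_S = \mu_S^{-1}$ literally. For the inductive step, I would peel off the outermost block $\mu_{s_1}\mu_{s_2}\cdots\mu_{r_{a_2}}$, apply the inductive hypothesis to the inner triangular sequence acting on the smaller vertex set $\{s_1, \ldots, s_{a_1-1}\}$, and use commutation of mutations at vertices that are non-adjacent in the intermediate quivers to rearrange the composition into $\mu_S^{-1}$.

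The main technical obstacle is that commutation of mutations is only legitimate when the two vertices are non-adjacent in the \emph{current} quiver, so the intermediate quivers must be tracked carefully through each block. The blocks $\mu_{s_1}\mu_{s_2}\cdots\mu_{s_k}$ perform a controlled sweep along the linear path that separates the already-mutated vertices from the rest, which is precisely what will justify the needed commutations; once established, the equality of the resulting cluster-tilting objects follows because both $\mu_S$ and $\mu_S^{-1}$ traverse the same set of exchange relations. The dual equality $\mu_T(\mu_{\mkern-1.5mu R}\, T) = \mu_T^{-1}(\mu_{\mkern-1.5mu R}\, T)$ then follows immediately from the isomorphism $Q \cong Q^{\mathrm{op}}$, which exchanges the $s$- and $t$-branches together with their associated mutation sequences and converts the statement for $\mu_S$ into the statement for $\mu_T$.
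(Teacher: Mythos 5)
Your proposal is correct and takes essentially the same route as the paper: the paper's proof of Lemma~\ref{lem 7.4} likewise notes that there are no arrows between $s_i$ and $s_{i+\ell}$ for $\ell>1$ in $\mu_{\mkern-1.5mu R}\,Q$, so the corresponding mutations commute, and uses only these commutations to rearrange the triangular sequence $\mu_S$ into its reverse $\mu_S^{-1}$ in one explicit step (your induction on $a_1$ merely reorganizes that same rearrangement), with the $\mu_T$ statement obtained dually from $Q\cong Q^{\operatorname{op}}$. One caution: the reduced-expression-for-$w_0$ observation carries no logical weight, since quiver mutations satisfy the commutation relations but not the braid relations, and your actual argument correctly never relies on it.
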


\begin{proof}
    In $\mu_{\mkern-1.5mu R} \,Q$, there are no arrows between $s_i$ and $s_{i + \ell}$ for $\ell > 1$. Therefore
    \[\mu_{s_i}\mu_{s_{i+\ell}}=\mu_{s_{i+\ell}} \mu_{s_i}.\] Thus the mutation sequence \[\mu_{s_1}\mu_{s_2}\mu_{s_3} \cdots \mu_{r_{a_2}} \circ \cdots \circ \mu_{s_1}\mu_{s_2}\mu_{s_3} \circ \mu_{s_1}\mu_{s_2} \circ \mu_{s_1}\] 
    is equal to the mutation sequence
    \[\mu_{s_1} \circ \mu_{s_2}\mu_{s_1} \circ \mu_{s_3}\mu_{s_2}\mu_{s_1} \cdots \circ \mu_{r_{a_2}} \cdots \mu_{s_3}\mu_{s_2}\mu_{s_1},\]
    and so we have $\mu_S(\mu_{\mkern-1.5mu R}\, T) = \mu_{S}^{-1}(\mu_{\mkern-1.5mu R}\, T)$.
\end{proof}

\begin{corollary}\label{cor:order_two}
    The mutation sequence $\mu$ is of order two, i.e.,
    \[\mu T = \mu^{-1}T.\]
\end{corollary}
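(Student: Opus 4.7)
The plan is to reduce $\mu T = \mu^{-1} T$ to an assertion involving only $\mu_S$ and $\mu_T$, and then combine \cref{lem 7.4} with the commutativity of these two sequences.

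First I would note that
\[ \mu^{-1} = (\mu_{\mkern-1.5mu R}^{-1}\,\mu_S\,\mu_T\,\mu_{\mkern-1.5mu R})^{-1} = \mu_{\mkern-1.5mu R}^{-1}\,\mu_T^{-1}\,\mu_S^{-1}\,\mu_{\mkern-1.5mu R}, \]
since each individual mutation is involutive and so $\mu_{\mkern-1.5mu R}$ and $\mu_{\mkern-1.5mu R}^{-1}$ cancel properly. Writing $X := \mu_{\mkern-1.5mu R}\,T$, the claimed equality $\mu T = \mu^{-1} T$ is then equivalent to $(\mu_S\mu_T)(X) = (\mu_T^{-1}\mu_S^{-1})(X)$, i.e., to
\[ (\mu_S\mu_T)^2(X) = X. \]

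The heart of the argument is to show that $\mu_S$ and $\mu_T$ commute as operations on $X$. Let $V_S = \{s_1,\ldots,s_{a_1-1},r_{a_2}\}$ and $V_T = \{t_1,\ldots,t_{a_1-1},r_0\}$ denote their vertex supports. In the quiver $X$, depicted on the left of \cref{fig:mutation_ST}, no vertex of $V_S$ is adjacent to any vertex of $V_T$; the only common neighbor of these two sets is the unmutated vertex $r_{a_2-1}$. Since a mutation at a vertex $v$ only creates or destroys arrows among the closed neighborhood of $v$, a mutation at some $v \in V_S$ can add new arrows only within $V_S \cup \{r_{a_2-1}\}$, and symmetrically for $V_T$. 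By induction on the number of mutations performed, I would verify that this invariant persists throughout $(\mu_S\mu_T)^2$: no vertex of $V_S$ is adjacent to any vertex of $V_T$ at any intermediate stage. It follows that every mutation in $\mu_S$ commutes with every mutation in $\mu_T$, so $\mu_S\mu_T Y = \mu_T\mu_S Y$ for every intermediate cluster $Y$, and consequently $(\mu_S\mu_T)^2 = \mu_S^2\mu_T^2$ as operations on $X$.

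To finish, \cref{lem 7.4} gives $\mu_S X = \mu_S^{-1} X$, whence $\mu_S^2 X = \mu_S(\mu_S^{-1} X) = X$, and dually $\mu_T^2 X = X$. Combining,
\[ (\mu_S\mu_T)^2(X) = \mu_S^2\,\mu_T^2(X) = \mu_S^2(X) = X, \]
which is the desired identity. The main obstacle is the commutativity assertion: because $V_S$ and $V_T$ share the common neighbor $r_{a_2-1}$, it is not immediately obvious that the long sequence of interleaved mutations cannot propagate to create a cross-arrow between $V_S$ and $V_T$. Once this structural invariant is confirmed, the rest of the proof is a short formal manipulation.
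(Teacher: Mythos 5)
Your proof is correct and follows essentially the same route as the paper: reduce $\mu T = \mu^{-1}T$ to the statement that $(\mu_S\mu_T)^2$ acts trivially on $X=\mu_{\mkern-1.5mu R}\,T$, using commutativity of $\mu_S$ and $\mu_T$ together with \cref{lem 7.4} to get $\mu_S^2 X = X$ and $\mu_T^2 X = X$. The only difference is presentational: the paper disposes of the commutativity in one line (``there are no arrows between $s_i$ and $t_j$''), whereas you verify the stronger invariant that is genuinely needed --- that the full supports $V_S$ and $V_T$ (which also contain $r_{a_2}$ and $r_0$) remain non-adjacent at every intermediate stage, which holds because their only common neighbor $r_{a_2-1}$ is never mutated.
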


\begin{proof}
    There are no arrows between $s_i$ and $t_j$ for any $1 \leq i,j \leq a_1-1$, so $\mu_S \mu_T = \mu_T \mu_S$. 
    Thus
    $  \mu^2 = \mu_{\mkern-1.5mu R}^{-1}  \mu_S  \mu_T  \mu_{\mkern-1.5mu R}
    \ 
    \mu_{\mkern-1.5mu R}^{-1}  \mu_S  \mu_T  \mu_{\mkern-1.5mu R}
    =  \mu_{\mkern-1.5mu R}^{-1}  \mu_S\mu_S   \mu_T \mu_T \mu_{\mkern-1.5mu R}
    $,
    and by Lemma~\ref{lem 7.4}, this is equal to $ \mu_{\mkern-1.5mu R}^{-1}  \mu_{\mkern-1.5mu R}$.
\end{proof}

\begin{theorem}
    The correspondence $T \mapsto A[1]$ induces a cluster automorphism $\sigma$. The automorphism $\sigma$ has order two, and is given by the sequence of mutations $\mu$.
\end{theorem}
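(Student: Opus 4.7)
The plan is to prove the three assertions in the following order: (i) the existence of the cluster automorphism $\sigma$; (ii) the identification of $\sigma$ with the sequence $\mu$; and (iii) the order-two property.

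For (i), I would invoke the cluster-automorphism criterion of \cite{ASS}. By \cref{thm:T-cto}, both $A[1]$ and $T$ are cluster-tilting objects in the acyclic cluster category $\mathcal{C}_Q$, so by \cite{CK2} each determines a cluster of $\mathcal{A}(Q)$: the initial cluster $\mathbf{x}_{A[1]}$ has exchange quiver $Q$, and by \cref{thm:end_T} the cluster $\mathbf{x}_T$ has exchange quiver $Q_{\operatorname{End}_AT}\cong Q^{\mathrm{op}}$. The remark $Q\cong Q^{\mathrm{op}}$ supplies an isomorphism between the quivers of the two clusters, so the criterion of \cite{ASS} produces a unique cluster automorphism $\sigma$ of $\mathcal{A}(Q)$ sending $\mathbf{x}_T$ to $\mathbf{x}_{A[1]}$.

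For (ii), I would show that applying $\mu$ to the initial seed produces the seed of $T$. On the quiver side this is essentially visible in Figures~\ref{fig:mutation_R}--\ref{fig:mu_Q}: $\mu_{\mkern-1.5mu R}$ reverses the cycle arrow-by-arrow yielding the acyclic quiver of \cref{fig:mutation_ST} (left); $\mu_S$ and $\mu_T$ reverse the two branches and the arrow into $r_{a_2-1}$ as shown on the right of \cref{fig:mutation_ST}; and $\mu_{\mkern-1.5mu R}^{-1}$ closes the cycle back up in the opposite orientation, yielding $Q^{\mathrm{op}}$ as in \cref{fig:mu_Q}. On the cluster-tilting side, I would argue inductively that each individual mutation $\mu_x$ in the prescribed order replaces the current summand indexed by $x$ with the indecomposable $M(x)$ of $T$, by computing the unique exchange triangle at each stage using the descriptions of $M(r_i), M(s_i), M(t_i)$ from \cref{def:M(ri),def:M(ti),def:M(si)} and the projective resolutions from \cref{sec:tau_M}. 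Since a cluster automorphism is determined by its action on any single cluster, this identifies $\sigma$ with the map induced by $\mu$.

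For (iii), the order-two property follows at once from \cref{cor:order_two}: $\mu^2 T=T$ implies $\sigma^2(\mathbf{x}_T)=\mathbf{x}_T$, and combined with $T\not\cong A[1]$, which gives $\sigma\ne\operatorname{id}$, we conclude that $\sigma$ has order exactly two. The main obstacle I anticipate is step (ii): matching the quivers at the endpoints does not suffice, and one must check that $\mu$ transforms $A[1]$ into $T$ through every intermediate cluster-tilting object, which requires careful inductive book-keeping of exchange triangles across all $O(a_1a_2)$ individual mutations in the sequence.
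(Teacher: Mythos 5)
Your steps (i) and (iii) follow the paper: existence of $\sigma$ comes from \cref{thm:T-cto}, \cref{thm:end_T}, $Q\cong Q^{\mathrm{op}}$ and the criterion of \cite{ASS}, and the order-two claim is reduced to \cref{cor:order_two} exactly as in the paper. But your step (ii) — the heart of the theorem — contains a genuine gap: the inductive invariant you propose, namely that ``each individual mutation $\mu_x$ in the prescribed order replaces the current summand indexed by $x$ with the indecomposable $M(x)$ of $T$,'' is false. In the sequence $\mu$ almost every vertex is mutated several times: $\mu_S$ mutates $s_i$ exactly $a_1-i+1$ times (and also mutates $r_{a_2}$), $\mu_T$ mutates $t_i$ exactly $i+1$ times (and also $r_0$), and each $r_i$ with $1\le i\le a_2-1$ is mutated once in $\mu_R$ and again in $\mu_R^{-1}$. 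So a single mutation at $x$ cannot output the final summand $M(x)$, and the intermediate cluster-tilting objects necessarily contain summands that are not of the form $M(y)$ at all: after $\mu_R$ is applied to $T$, the summand at $r_i$ is the thin module $X(r_i)$ supported on $r_1,\ldots,r_i$, and during $\mu_S$ the summand at $s_i$ runs through the shifted objects $M(s_i)[j]$, none of which is a summand of $T$ or of $A[1]$. Your induction therefore has no correct hypothesis to carry from one mutation to the next, and the ``careful book-keeping'' you flag as the main obstacle is precisely where the argument, as formulated, breaks.

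The paper avoids tracking all $O(a_1a_2)$ exchange triangles by a meet-in-the-middle computation: it proves $\mu_T\mu_S\mu_R\, T=\mu_R\, A[1]$, whence $\mu T=A[1]$. Only the $a_2-1$ mutations in $\mu_R$ applied to $T$ are computed via exchange triangles, and these are tractable because at each step there is exactly one arrow $r_0\to r_i$ ending at the mutated vertex and $M(r_0)$ is injective (\cref{rem:proj_inj}), so $X(r_i)$ is obtained from a kernel computation. For the middle portion, the key structural observation is that $\mu_S$ and $\mu_T$ mutate only at sources, respectively sinks, of the current quiver, so each such mutation acts on the corresponding summand as the shift, respectively inverse shift, in $\mathcal{C}_Q$; one then only counts multiplicities to get $M(s_i)[a_1-i+1]=P(t_{a_1-i})[1]$ and $M(t_i)[-(i+1)]=P(s_{a_1-i})[1]$, using \cref{lem:tau_M(si)}. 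Finally $\mu_R\, A[1]$ is evaluated by the same kernel computation, giving $X'(r_i)=X(r_i)$. If you want to salvage your plan, replace the per-mutation induction with this structure; also note the direction of the computation — the paper establishes $\mu T=A[1]$ rather than $\mu A[1]=T$ (the two are interchangeable only after invoking \cref{cor:order_two}), so your write-up should fix one and derive the other.
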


\begin{proof}
    From \cref{cor:order_two}, it is enough to show that $\sigma$ is given by the sequence of mutations $\mu$. We show that $\mu_T \mu_S \mu_{\mkern-1.5mu R}\, T = \mu_{\mkern-1.5mu R}\, A[1]$.
    
    First, we apply the mutation sequence $\mu_{\mkern-1.5mu R}$ to $T$. As shown in \cref{fig:mutation_R}, at each step prior to mutating at the vertex $r_i$ we have exactly one arrow ending at $r_i$, namely $r_0 \to r_i$. Hence we replace the summand $M(r_i)$ of $T$ with the module $X(r_i)$, given by the exchange triangle
    \begin{align*}
        M(r_0)[-1] \to X(r_i)[-1] \to M(r_i) \xrightarrow{f} M(r_0) \to X(r_i),
    \end{align*}
    where $f$ is an $\operatorname{add} (T/M(r_i))$-approximation.
    By \cref{rem:proj_inj}, we know that $M(r_0)$ is injective. Therefore the image of this triangle under the functor $\operatorname{Hom}_{\mathcal{C}}(T,-)$ is the exact sequence in $\operatorname{Mod}A$:
    \begin{align*}
        0 \to \tau^{-1}X(r_i) \to M(r_i) \xrightarrow{f} M(r_0).
    \end{align*}
    So $\tau^{-1}X(r_i)$ is given by the kernel of the morphism $M(r_i) \xrightarrow{f} M(r_0)$, hence
    \begin{align*}
        \tau^{-1}X(r_i) = \begin{smallmatrix} & r_0 & \\ r_1 & & t_1 \\ r_2 & & t_2 \\ \vdots & & \vdots \\ r_{i-1} & & t_{a_1-1}\end{smallmatrix}.
    \end{align*}
    Now $\tau^{-1}X(r_i)$ has a projective resolution $\cdots \to P(r_i) \xrightarrow{g} P(r_0) \to \tau^{-1}X(r_i) \to 0$, so by applying the Nakayama functor $\nu$ we see that $X(r_i)$ is given by the kernel of the map $I(r_i) \xrightarrow{\nu g} I(r_0)$. Hence
    \begin{align*}
        X(r_i) = \begin{smallmatrix} r_1 \\ r_2 \\ \vdots \\ r_i \end{smallmatrix}, \quad\quad \text{and} \quad\quad \mu_{\mkern-1.5mu R}\, T = T \setminus \Bigg(\bigoplus_{i=1}^{a_2-1}M(r_i)\Bigg) \oplus \Bigg(\bigoplus_{i=1}^{a_2-1} X(r_i)\Bigg) .
    \end{align*}
    Second, we apply the mutation sequence $\mu_S$. Note that the mutation sequence $\mu_S$ only mutates the quiver $\mu_{\mkern-1.5mu R}\, Q$ at sources. Since the quiver of $T$ is $Q^{\operatorname{op}}$, each mutation in this sequence acts on the corresponding summand of $T$ as the shift operator. Dually, each mutation in the sequence $\mu_T$ acts on the corresponding summand of $T$ as the inverse shift operator.
    
    Let $s_{a_1} := r_{a_2}$ and $t_0 := r_0$. For each $i$, the mutation sequence $\mu_S$ mutates the vertex $s_i$ exactly $a_1 - i + 1$ times and the mutation sequence $\mu_T$ mutates the vertex $t_i$ exactly $i+1$ times. We conclude that
    
    \begin{align*}
        \mu_T\mu_S\mu_{\mkern-1.5mu R}\, T = \Bigg(\bigoplus_{i=1}^{a_2-1} X(r_i)\Bigg) \oplus \Bigg(\bigoplus_{i=1}^{a_1} M(s_i)[a_1-i+1]\Bigg) \oplus \Bigg( \bigoplus_{i=0}^{a_1-1} M(t_i)[-(i+1)]\Bigg).
    \end{align*}
    
    Next, we compute $M(s_i)[a_1-i+1]$ and $M(t_i)[-(i+1)]$ for all $i$. Given a pair $(j, \ell)$ such that $1 \leq j \leq \ell < a_1-1$, define
        \begin{align*}
            L := \begin{smallmatrix}t_j \\ t_{j+1} \\ \vdots \\ t_{\ell} \end{smallmatrix}.
        \end{align*}
        Then $L$ has a projective resolution $\cdots \to P(t_{\ell+1}) \xrightarrow{g} P(t_j) \to L \to 0$. Applying the Nakayama functor $\nu$, we see that $\tau L = L[1]$ is given by the kernel of the morphism $I(t_{\ell + 1}) \xrightarrow{\nu g} I(t_j)$, so
        \begin{align*}
            L[1] = \begin{smallmatrix} t_{j+1} \\ t_{j+2} \\ \vdots \\ t_{\ell+1} \end{smallmatrix}.
        \end{align*}
        Recall $\tau M(s_i)$ from \cref{lem:tau_M(si)}. Then for all $i$, it follows that
        \begin{align*}
            M(s_i)[a_1-i+1] &= \tau M(s_i)[a_1 - i - 1][1], \\[1.5em]
            &= \begin{smallmatrix}t_1\\t_2 \\\vdots\\t_i\end{smallmatrix}[a_1 - i - 1][1], \\[1.5em]
            &= \begin{smallmatrix}t_{a_1-i}\\t_{a_1-i+1} \\\vdots\\t_{a_1-1}\end{smallmatrix}[1], \\[1.5em]
            &= P(t_{a_1-i})[1].
        \end{align*}
        Then from a dual argument, we also have
        \begin{align*}
            M(t_i)[-(i+1)] = I(s_{a_1-i})[-1] = P(s_{a_1-i})[1].
        \end{align*}
        Hence
        \begin{align*}
            \mu_T \mu_S \mu_{\mkern-1.5mu R}\, T = \Bigg(\bigoplus_{i=1}^{a_2-1}X(r_i)\Bigg)\oplus \Bigg(\bigoplus_{i=0}^{a_1-1}P(s_i)[1]\Bigg) \oplus \Bigg(\bigoplus_{i=1}^{a_1}P(t_i)[1]\Bigg).
        \end{align*}
        
        Finally, let $A[1] = \bigoplus_{x \in Q_0} P(x)[1]$ with quiver $Q^{\operatorname{op}}$. For $i=1,2, \cdots, a_2-1$, let $X'(r_i)$ be the module replacing the summand $P(r_i)[1]$ in the module $\mu_{\mkern-1.5mu R}\, A[1]$. To complete the proof, we show that $X'(r_i) = X(r_i)$. This proves that $\mu_S \mu_T \mu_{\mkern-1.5mu R}\, T = \mu_{\mkern-1.5mu R}\, A[1]$, and thus $\mu T = A[1]$.
        
        Recall from \cref{fig:mutation_R} that at each step of $\mu_{\mkern-1.5mu R}$, prior to mutating at the vertex $r_i$ we have exactly one arrow ending at $r_i$, namely $r_0 \to r_i$. So $X'(r_i)$ is given by the exchange triangle
        \begin{align*}
            P(r_i) \xrightarrow{h} P(r_0) \to X'(r_i)[-1] \to P(r_i)[1] \to P(r_0)[1] \to X'(r_i).
        \end{align*}
        The image of this triangle under the functor $\operatorname{Hom}_{\mathcal{C}}(A[1],-)$ is the exact sequence in $\operatorname{mod}A$:
        \begin{align*}
            P(r_i) \xrightarrow{h} P(r_0) \to \tau^{-1}X'(r_i) \to 0.
        \end{align*}
        We have a projective resolution $\cdots \to P(r_i) \xrightarrow{h} P(r_0) \to \tau^{-1}X'(r_i)$, so applying the Nakayama functor $\nu$ tells us that $X'(r_i)$ is given by the kernel of the map $I(r_i) \xrightarrow{\nu h} I(r_0)$. This is exactly how we computed $X(r_i)$, so $X'(r_i) = X(r_i)$. This completes the proof.
\end{proof}

\section{Example}\label{sec:example}
We illustrate the results in the example $(a_1,a_2)=(2,2)$ which is of type $\mathbb{A}_5$. The corresponding knot is the figure eight knot $L[2,2]$. We have
\[Q=\xymatrix{1\ar[rr]&&2\ar[rr]&&3\ar[rr]&&4\\&&&5\ar[lu]\ar@{<-}[ru]}\] and the Jacobian algebra $A$ is the path algebra of  $Q$ modulo the two-sided ideal generated by the subpaths of length 2 in the 3-cycle. Thus $A$ is a cluster-tilted algebra of Dynkin type $\mathbb{A}_5$. 

The $A$-module $T=\oplus_{i=1}^5 M(i)$ is given by  \[
M(1)=\begin{smallmatrix}3\\5 \end{smallmatrix} 
\quad,\quad
M(2) = \begin{smallmatrix}3\\4\ 5 \end{smallmatrix} 
\quad,\quad
M(3) = \begin{smallmatrix}1\ 5\\ 2 \end{smallmatrix} 
\quad,\quad
M(4)=\begin{smallmatrix} 5\\ 2 \end{smallmatrix} 
\quad,\quad
M(5)= \begin{smallmatrix}1\\ 2\\ 3\\ 4 \end{smallmatrix}.
\]
The indecomposable summands $M(2),M(3),M(5)$ correspond to the vertices on the 3-cycle. Each of these indecomposables has precisely 5 submodules, as expected, since 5 is the number of terms in the Jones polynomial of the figure eight knot.
For example, the submodules of $M(2)$ are $0, 4, 5, 4\oplus 5$ and $M(2)$.

 The Auslander-Reiten quiver of the cluster category is the following
\[\xymatrix@!@R1pt@!@C1pt{
{\begin{smallmatrix}\bf 5 \\ \bf 2\end{smallmatrix}}\ar[dr] 
&&{\begin{smallmatrix}1\end{smallmatrix}\ar[dr]} &&
*+[Fo]{\begin{smallmatrix}{1}\end{smallmatrix}}\ar[dr] &&
{\begin{smallmatrix}\bf 1\\\bf 2\\\bf 3\\\bf 4 \end{smallmatrix}\ar[dr]} &&
*+[Fo]{\begin{smallmatrix}{4}\end{smallmatrix}}\ar[dr] &&{\begin{smallmatrix}{4}\end{smallmatrix}}\ar[dr] 
\\
&{\begin{smallmatrix}\bf 1\ 5\\\bf 2\end{smallmatrix}}\ar[dr]\ar[ur] &&
*+[Fo]{\begin{smallmatrix}{2}\end{smallmatrix}}\ar[ur]\ar[dr] &&
{\begin{smallmatrix}2\\3\\4\end{smallmatrix}}\ar[dr]\ar[ur] &&
{\begin{smallmatrix}1\\2\\3\end{smallmatrix}}\ar[dr]\ar[ur] &&
*+[Fo]{\begin{smallmatrix}{3}\end{smallmatrix}}\ar[dr]\ar[ur] &&
{\begin{smallmatrix}\bf 3\\\bf 4\ 5\end{smallmatrix}} 
\\
{\begin{smallmatrix}1\\2\end{smallmatrix}}\ar[dr]\ar[ur]
&&{\begin{smallmatrix}5\end{smallmatrix}}\ar[dr]\ar[ur] &&
{\begin{smallmatrix}3\\4\end{smallmatrix}}\ar[dr]\ar[ur] &&
{\begin{smallmatrix}2\\3\end{smallmatrix}}\ar[dr]\ar[ur] &&
{\begin{smallmatrix}1\\2\end{smallmatrix}}\ar[dr]\ar[ur] &&
{\begin{smallmatrix}5\end{smallmatrix}}\ar[dr]\ar[ur]
\\
&*+[Fo]{\begin{smallmatrix}{3}\end{smallmatrix}}\ar[ur]\ar[dr] &&
{\begin{smallmatrix}\bf 3\\\bf 4\ 5\end{smallmatrix}}\ar[dr]\ar[ur] &&
{\begin{smallmatrix}3\end{smallmatrix}}\ar[dr]\ar[ur] &&
{\begin{smallmatrix}2\end{smallmatrix}}\ar[dr]\ar[ur] &&
{\begin{smallmatrix}\bf 1\ 5\\\bf 2\end{smallmatrix}}\ar[dr]\ar[ur] &&
*+[Fo]{\begin{smallmatrix}{2}\end{smallmatrix}}
\\
*+[Fo]{\begin{smallmatrix}{4}\end{smallmatrix}}\ar[ur] &&
{\begin{smallmatrix}{4}\end{smallmatrix}}\ar[ur] &&
{\begin{smallmatrix}\bf 3 \\ \bf 5\end{smallmatrix}}\ar[ur] &&
*+[Fo]{\begin{smallmatrix}{5}\end{smallmatrix}}\ar[ur] &&
{\begin{smallmatrix}\bf 5 \\ \bf 2\end{smallmatrix}}\ar[ur] &&
{\begin{smallmatrix}{1}\end{smallmatrix}}\ar[ur] 
}
\] 
where vertices with the same labels are identified, and the shift of the projective $P(i)$ is denoted by an $i$ in a circle. The summands of $T$ are set in bold face.

The mutation sequence $\mu$ of Section~\ref{sec:mutation} is $\mu=\mu_5\circ\mu_4\circ\mu_3\circ\mu_4\circ\mu_1\circ\mu_2\circ\mu_1\circ\mu_5$. This mutation sequence  sequence sends $T$ to $A[1]$, more precisely
\[M(1)\mapsto P(4)[1], \ 
M(2)\mapsto P(3)[1], \ 
M(3)\mapsto P(2)[1],\ 
M(4)\mapsto P(1)[1],\ 
M(5)\mapsto P(5)[1].
\]


\end{document}